\documentclass[reqno,11pt]{amsart}
\usepackage{amsfonts}
\usepackage{a4wide}
\usepackage{color}
\usepackage{mathrsfs}
\usepackage{mathtools}
\usepackage{amsmath}
\usepackage{amssymb}
\usepackage{bbm}
\usepackage{bm}
\usepackage{esint}
\usepackage{nicefrac}
\usepackage{comment}
\numberwithin{equation}{section}
\usepackage[colorlinks,citecolor=green,linkcolor=red]{hyperref}

\usepackage[latin1]{inputenc}
\input xy
\xyoption{all}

\newtheorem{theorem}{Theorem}[section]
\newtheorem{Ca}[theorem]{Corollary}
\newtheorem{Th}[theorem]{Theorem}
\newtheorem{Lm}[theorem]{Lemma}
\newtheorem{Prop}[theorem]{Proposition}
\newtheorem{Def}[theorem]{Definition}
\newtheorem{Example}[theorem]{Example}
\newtheorem{Remark}[theorem]{Remark}
\newtheorem{Problem}[theorem]{Problem}

\title{Traces of weighted Besov spaces to Ahlfors--David regular sets: the limiting case}
\author{Aleksei Y. Chikalov}
\address{Steklov Mathematical Institute of Russian Academy of Sciences}
\email{chikalov.a@phystech.edu}
\begin{document}
\allowdisplaybreaks
\subjclass[2010]{46E35, 42B35}
\keywords{weighted Besov spaces, traces, Ahlfors--David regular sets, Muckenhoupt weights}
\begin{abstract}
 Given $n\in \mathbb{N}$, $p\in [1,\infty)$, and a weight $\gamma$ satisfying the local Muckenhoupt $A_p$ condition, we introduce a weakened version of the Ahlfors--David codimension-$\theta$ regularity condition for Ahlfors--David $d$-regular sets $E\subset\mathbb{R}^n$, where $d\in(0,n)$ and $\theta\in(0,p)$. Under this assumption, we provide a complete intrinsic description of the trace-space of the weighted Besov space $B^{\frac{\theta}{p}}_{p,1}(\mathbb{R}^n,\gamma)$ to $E$. In particular, our results cover the case of power-type weights $\gamma(x)=|x|^\alpha$ with $-n<\alpha<n(p-1)$, $\alpha\neq -(n-1)$, when $E=\mathbb{R}^{n-1}$. This extends earlier results obtained by Haroske and Schmeisser in \cite{Har_Sch}.

\end{abstract}
\maketitle
\tableofcontents
\section{Introduction}
\noindent

The theory of Besov spaces is an important and rapidly growing area of modern analysis. We refer the interested reader to the monographs \cite{Jon, Sawano, Trieb} for a comprehensive treatment of the theory of classical Besov spaces. One of the most fundamental questions concerning Besov spaces, which has attracted considerable attention over the years, is the trace problem, i.e., the problem of finding sharp intrinsic descriptions of trace-spaces of Besov spaces to various closed sets \(E \subset \mathbb{R}^n\). In his pioneering work \cite{Bes_or}, Besov proved that, given $d\in [1, n-1]$, \(p,q \in [1,\infty]\) and \(s \in (\frac{n-d}{p},\infty)\), the trace-space of the Besov space \(B^{s}_{p,q}(\mathbb{R}^n)\) to the plane \(\mathbb{R}^{d}\subset\mathbb{R}^n\) is linearly and continuously isomorphic to \(B^{s-\frac{n-d}{p}}_{p,q}(\mathbb{R}^{d})\). More precisely, every element of \(B^{s}_{p,q}(\mathbb{R}^n)\) has a well-defined trace to \(\mathbb{R}^{d}\) lying in \(B^{s-\frac{n-d}{p}}_{p,q}(\mathbb{R}^{d})\), and conversely, every element of \(B^{s-\frac{n-d}{p}}_{p,q}(\mathbb{R}^{d})\) arises as the trace of some element of \(B^{s}_{p,q}(\mathbb{R}^n)\). Moreover, there exists a bounded linear extension operator \(\operatorname{Ext}:B^{s-\frac{n-d}{p}}_{p,q}(\mathbb{R}^{d})\rightarrow B^{s}_{p,q}(\mathbb{R}^n)\).
\par
The limiting case \(s=\frac{n-d}{p}\) is substantially more delicate. At this critical smoothness, the case \(q=1\) is exceptional, for \(q>1\), the usual trace theorem fails: functions in the corresponding Besov space need not have traces in the Lebesgue-point sense. In the endpoint case \(q=1\), Burenkov and Gol'dman \cite{Bur_art} proved that, for \(p \in [1,\infty)\), the trace-space of \(B^{\frac1p}_{p,1}(\mathbb{R}^{n})\) to \(\mathbb{R}^{n-1}\) can be identified in an appropriate sense with \(L_p(\mathbb{R}^{n-1})\). Another remarkable feature of this limiting case is that the corresponding extension operator has to be nonlinear (see \cite{Bur_art}). A generalization of this result to an arbitrary plane \(\mathbb{R}^d \subset \mathbb{R}^n\) was proved by Gol'dman in \cite{Gol}. In particular, for \(p\in [1,\infty)\), the trace-space of \(B^{\frac{n-d}{p}}_{p,1}(\mathbb{R}^{n})\) to \(\mathbb{R}^{d}\) can be identified with \(L_p(\mathbb{R}^{d})\).
\par
It should be underlined that the limiting phenomenon was first discovered by Gagliardo in the context of Sobolev spaces. In his pioneering work \cite{Gagl}, he proved that, for \(p>1\), the trace-space of the Sobolev space \(W^1_p(\mathbb{R}^n)\) to \(\mathbb{R}^{n-1}\) is linearly and continuously isomorphic to \(B^{1-\frac{1}{p}}_{p,p}(\mathbb{R}^{n-1})\), while in the endpoint case \(p=1\) it can be identified with \(L_1(\mathbb{R}^{n-1})\). Moreover, the necessity of a nonlinear extension operator also appears in the limiting case \(p=1\) (see \cite{koch_sn, Peetre}).
\par
Recently, weighted function spaces have attracted considerable attention due to their applications in the analysis of degenerate and singular elliptic equations and in elliptic and parabolic boundary value problems with inhomogeneous boundary conditions (see, e.g., \cite{Fabes,HumLin} and references therein). It is therefore natural to look at weighted analogues of the trace results discussed above. Traces of weighted Sobolev spaces have been extensively investigated (see, e.g., \cite{shanm, Tyul_s_n, Tyul_s, tyulenev2026}), with both non-limiting and limiting cases receiving substantial attention.
For weighted Besov spaces, the non-limiting trace problem is fairly well understood in the case of traces to hyperplanes and, more generally, to ``flat'' lower-dimensional subsets (see, e.g., \cite{Har_tr, Fraz, Tyul_b}). In contrast, in the context of Ahlfors--David \(d\)-regular sets, the corresponding trace theory is far less understood. 
\par
As far as we know, the results closest to the limiting case of the trace problem considered here are available only for special examples of weights and sets.
For example, using atomic decomposition techniques, Haroske and Schmeisser \cite{Har_Sch} proved that, for \(1<p<\infty\), \(-(n-1)<\alpha<n(p-1)\), and \(\gamma(x)=|x|^\alpha\), \(x\in\mathbb{R}^n\), the trace-space of \(B^{\frac1p}_{p,1}(\mathbb{R}^n,\gamma)\) to \(\mathbb{R}^{n-1}\) can be identified with \(L_p(\mathbb{R}^{n-1},{\bar{\gamma}})\), where \(\bar{\gamma}(x')=\gamma(x',0)\), \(x'\in\mathbb{R}^{n-1}\). Based on a similar technique, Piotrowska \cite{Iwon} obtained a description of traces of weighted Besov spaces to Ahlfors--David \(d\)-regular subsets \(E\subset\mathbb{R}^n\) for distance weights \(\gamma(x)=\operatorname{dist}(x,E)^\alpha\), \(x\in\mathbb{R}^n\), \(\alpha>-(n-d)\). In particular, given \(p\in[1,\infty)\), the trace-space of
\(B^{\frac{\alpha}{p}+\frac{n-d}{p}}_{p,1}(\mathbb{R}^n,\gamma)\)
to \(E\) can be identified with \(L_p(E)\).
\par
The aim of this paper is to extend those limiting trace results to a broader class of weights. Let us emphasize that the range \(-n<\alpha<-(n-1)\) for the power weights \(\gamma(x)=|x|^\alpha\), \(x\in\mathbb{R}^n\), is not covered by the result in \cite{Har_Sch}. This singular range cannot be reached by a straightforward adaptation of the method used in \cite{Har_Sch}. Indeed, if \(-n<\alpha<-(n-1)\), then the pointwise restriction of the weight \(\gamma\) to the hyperplane, \(\bar\gamma(x')=|x'|^\alpha\), \(x'\in\mathbb{R}^{n-1}\), is not integrable in any neighborhood of the origin, and hence \(L_p(\mathbb{R}^{n-1},{\bar{\gamma}})\) is not a suitable candidate for the role of the trace-space. To overcome this difficulty, we develop a different approach based on techniques from the theory of function spaces on metric measure spaces (see, for example, \cite{GKZ, Luk_e, Sak_Sot}). Our method is also inspired by the ideas in \cite{Tyul_s, tyulenev2026}.
\par
Following the monograph of Jonsson and Wallin \cite{Jon}, we consider traces to Ahlfors--David \(d\)-regular sets $E\subset\mathbb{R}^n$. Furthermore, we impose a weakened version of the classical Ahlfors--David codimension-\(\theta\) regularity condition on \(E\) and describe the trace-space of \(B^{\frac{\theta}{p}}_{p,1}(\mathbb{R}^n,\gamma)\) to \(E\), for each \(p\in[1,\infty)\) and each \(\theta\in(0,p)\). In particular, our results include the case \(E=\mathbb{R}^{n-1}\) and \(\gamma(x)=|x|^\alpha\), \(x\in\mathbb{R}^n\), \(-n<\alpha<n(p-1)\), with \(\alpha\neq -(n-1)\). Thus, we obtain a natural extension of the results obtained in \cite{Har_Sch}. The borderline case \(\alpha=-(n-1)\) is not covered either by the methods of \cite{Har_Sch} or by the present paper. This case is critical for our approach: the corresponding weight \(\bar{\gamma}(x')=|x'|^{-(n-1)}\), \(x'\in\mathbb{R}^{n-1}\), on the hyperplane is not integrable in any neighborhood of the origin, while the corresponding singularity is not ``admissible'' in the sense introduced below. The borderline case calls for conceptually new methods and will be considered in our future investigations.
\subsection{Main results}
In order to pose the problem and formulate the main results, we briefly sketch the necessary background (see Section~2 for the detailed exposition).
\par
First we recall the concepts of the Hausdorff measure and Ahlfors--David regular sets. Given \(d\in[0,n]\) and \(E\subset\mathbb{R}^n\), we set
\begin{equation}
    \mathcal{H}^d(E)
    :=\lim_{\delta\to 0}
    \inf\left\{\sum_i r_i^d:
    E\subset \bigcup_i Q_{r_i}(x_i),\ 0<r_i<\delta\right\},
\end{equation}
where, for each $x\in\mathbb{R}^n$ and $r>0$, we put $Q_r(x) := x+rI^n$.
Given a set \(E\subset\mathbb R^n\), we denote by
\(\mathcal H^d\lfloor_E\) the restriction of \(\mathcal H^d\) to \(E\), that is,
\begin{equation}
    (\mathcal H^d\lfloor_E)(G):=\mathcal H^d(G\cap E)
\end{equation}
for each measurable set \(G\subset\mathbb R^n\).
Given $d\in (0, n]$, a closed set \(E\subset\mathbb{R}^n\) is said to be \emph{Ahlfors--David \(d\)-regular} if there exist constants \(C_1,C_2>0\) such that
\begin{equation}
    C_1r^d
    \le \mathcal{H}^d\lfloor_E(Q_r(x))
    \le C_2r^d,
    \qquad \text{for all } (x,r)\in E\times(0,1].
\end{equation}
\par
Given \(d\in(0,n]\) and an Ahlfors--David \(d\)-regular set \(E\subset\mathbb R^n\), including the case \(E=\mathbb R^n\) with \(d=n\), a measurable function \(\gamma:E\to\mathbb R\) is called a \emph{weight} on $E$ if
\(\gamma(x)>0\) for \(\mathcal H^d\lfloor_E\)-almost every \(x\in E\). For such  a weight \(\gamma\), we denote by \(\bm{\gamma}\) the corresponding weighted measure, i.e.,
\begin{equation}
\label{eq.weight_meas_def}
    \bm{\gamma}(G):=\int\limits_G \gamma(x)\,d\mathcal{H}^d\lfloor_E(x)
\end{equation}
for every measurable set \(G\subset\mathbb{R}^n\).
 Given a measurable set \(G\) and \(p\in[1,\infty]\), by \(L_p(G,\gamma)\) we denote the space of all equivalence classes of real-valued measurable functions \(f\) such that
\begin{equation}
    \|f\|_{L_p(G,\gamma)}
    :=\left(\int\limits_G |f(x)|^p\,d\bm{\gamma}(x)\right)^{\frac{1}{p}}<\infty
\end{equation}
with the usual modification when \(p=\infty\). Given a measurable set \(G\) and \(p\in[1,\infty]\), we say that \(f\in L_p^{\operatorname{loc}}(G,\gamma)\) if \(f\in L_p(K,\gamma)\) for each compact set \(K\subset G\). When \(\gamma(x)\equiv 1\), we omit the corresponding symbol, i.e.,
\begin{equation}
L_p(G):=L_p(G,1),
\qquad
L_p^{\operatorname{loc}}(G):=L_p^{\operatorname{loc}}(G,1).
\end{equation}
Finally, if \(G\subset\mathbb{R}^n\) is measurable, \(\bm{\gamma}(G)\in(0,\infty)\), and \(f\in L_1(G,\gamma)\), we set
\begin{equation}
    \fint\limits_G f(x)\,d\bm{\gamma}(x)
    :=\frac{1}{\bm{\gamma}(G)}\int\limits_G f(x)\,d\bm{\gamma}(x).
\end{equation}
\par
Throughout the paper, the symbol \(\overline{\Delta}_t f\), \(t>0\), stands for the mean oscillation of \(f\in L_1^{\operatorname{loc}}(\mathbb{R}^n)\) over a cube with side length $2t$, i.e.,
\begin{equation}
    \overline{\Delta}_t f(x)
    :=\frac{1}{t^n}\int\limits_{tI^n}|f(x+h)-f(x)|\,dh,
\end{equation}
where \(tI^n=\prod_{i=1}^n[-t,t]\), \(t>0\).
Given \(p,q\in[1,\infty]\), \(s\in(0,1)\), and a weight \(\gamma\) on \(\mathbb{R}^n\), by \(B^s_{p,q}(\mathbb{R}^n,\gamma)\) we denote the Besov space consisting of all \(f\in L_1^{\operatorname{loc}}(\mathbb{R}^n)\cap L_p(\mathbb{R}^n,\gamma)\) such that
\begin{equation}
    \|f\|_{b^s_{p,q}(\mathbb{R}^n,\gamma)}
    :=\left(\int\limits_0^1
    \left(t^{-s}\|\overline{\Delta}_t f\|_{L_p(\mathbb{R}^n,\gamma)}\right)^q
    \frac{dt}{t}\right)^{\frac{1}{q}}<+\infty
\end{equation}
with the standard modification when \(q=\infty\). We equip this space with the norm
\begin{equation}
    \|f\|_{B^s_{p,q}(\mathbb{R}^n,\gamma)}
    :=\|f\|_{L_p(\mathbb{R}^n,\gamma)}
    +\|f\|_{b^s_{p,q}(\mathbb{R}^n,\gamma)}, \qquad f\in B^s_{p, q}(\mathbb{R}^n, \gamma).
\end{equation}
\par
Without additional assumptions on \(\gamma\), the study of \(B^s_{p,q}(\mathbb{R}^n,\gamma)\) is very difficult. In this paper, we assume that the weights satisfy the local Muckenhoupt condition. Given \(p\in(1,\infty)\), the symbol \(A_p^{\operatorname{loc}}(\mathbb{R}^n)\) denotes the collection of all weights \(\gamma\) on \(\mathbb{R}^n\) such that
\begin{equation}
    \sup_{Q\subset\mathbb{R}^n:\,\ell(Q)\le 1}
    \left(\fint\limits_Q \gamma(x)\,dx\right)
    \left(\fint\limits_Q \gamma(x)^{-\frac{p'}{p}}\,dx\right)^{\frac{p}{p'}}
    <+\infty,
\end{equation}
where \(p'\) is the conjugate exponent to \(p\), \(Q\) ranges over closed cubes with sides parallel to the coordinate axes, and \(\ell(Q)\) denotes the side length of \(Q\). By \(A_1^{\operatorname{loc}}(\mathbb{R}^n)\) we denote the collection of all weights \(\gamma\) on \(\mathbb{R}^n\) satisfying
\begin{equation}
    \sup_{Q\subset\mathbb{R}^n:\,\ell(Q)\le 1}
    \left(\fint\limits_Q \gamma(x)\,dx\right)
    \left(\operatorname*{ess\,inf}_{x\in Q}\gamma(x)\right)^{-1}
    <+\infty.
\end{equation}
The classes \(A_p^{\operatorname{loc}}(\mathbb{R}^n)\), $p\in [1, \infty)$, were introduced by Rychkov in \cite{Rych} for the study of weighted Besov and Lizorkin--Triebel spaces. These classes are natural generalizations of the famous Muckenhoupt classes \(A_p(\mathbb{R}^n)\), $p\in [1, \infty)$, (see, e.g., \cite[Chapter 5]{Stein}). In contrast to the global classes $A_p(\mathbb{R}^n)$, $p\in[1, \infty)$, local Muckenhoupt classes $A_p^{\operatorname{loc}}(\mathbb{R}^n)$, $p\in[1, \infty)$, allow certain growth at infinity, including exponential growth.
\par
Given \(d\in(0,n)\), an Ahlfors--David \(d\)-regular set \(E\), and \(f\in L_1^{\operatorname{loc}}(\mathbb{R}^n)\), we say that a measurable function \(\phi:E\to\mathbb{R}\) is a \emph{trace} of \(f\) if
\begin{equation}
\label{eq.tr_def}
     \lim_{r\to 0}
     \fint\limits_{Q_r(x)} |f(y)-\phi(x)|\,dy=0,
     \qquad
     \text{for \(\mathcal{H}^d\lfloor_E\)-a.e. } x\in E.
\end{equation}
In this case, the equivalence class of \(\phi\) modulo \(\mathcal H^d\lfloor_E\)-negligible sets is denoted by \(\operatorname{Tr} f\). If \(X\subset L_1^{\operatorname{loc}}(\mathbb R^n)\) is a normed linear space of functions, we say that the trace operator is well defined on \(X\) if every \(f\in X\) has a trace to \(E\) in the above sense. In this case, the mapping 
\begin{equation} \operatorname{Tr}:X\to L_0(E), \qquad f\mapsto \operatorname{Tr}f, 
\end{equation} 
is called the trace operator, where \(L_0(E)\) denotes the space of
all equivalence classes of measurable functions on \(E\) modulo
\(\mathcal H^d\!\lfloor_E\)-negligible sets.
\par
The trace operator need not be well defined on
\(B^{s}_{p,1}(\mathbb R^n,\gamma)\) for an arbitrary
\(A_p^{\operatorname{loc}}\)-weight \(\gamma\) (see Section~3 for details).
Our \textit{first main} result presents a natural sufficient condition for that.
\begin{Th}
    \label{Th.tr_exist_stat}
    Let \(d\in(0,n)\), \(p\in [1, \infty)\), \(\theta\in(0, p)\), and \(\gamma \in A_p^{\operatorname{loc}}(\mathbb{R}^n)\). Assume that \(E\subset\mathbb R^n\) is an Ahlfors--David \(d\)-regular set and that, for
    \(\mathcal H^d\lfloor_E\)-almost every \(x\in E\), the weight \(\gamma\) is
    \(\theta\)-nondegenerate at \(x\), i.e.,
\begin{equation}
\label{eq.Tr_reg}
    \inf_{r\in(0,1)}\frac{\bm{\gamma}(Q_r(x))}{r^{d+\theta}}>0.
\end{equation}
Then, for each \(f\in B^{\frac{\theta}{p}}_{p,1}(\mathbb R^n,\gamma)\), there exists the trace of \(f\) to \(E\) in the sense of \eqref{eq.tr_def}.
\end{Th}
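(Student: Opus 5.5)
The plan is to show that, for $\mathcal H^d\lfloor_E$-a.e.\ $x\in E$, the cube averages $f_{Q_r(x)}:=\fint_{Q_r(x)}f(y)\,dy$ form a Cauchy family as $r\to0$, and that $\fint_{Q_r(x)}|f-f_{Q_r(x)}|\,dy\to0$. Then $\phi(x):=\lim_{r\to0}f_{Q_r(x)}$ will satisfy \eqref{eq.tr_def}. Everything rests on a maximal-type functional on $E$,
\begin{equation*}
\mathcal M f(x):=\sum_{k\ge 0}\fint\limits_{Q_{2^{-k}}(x)}\fint\limits_{Q_{2^{-k}}(x)}|f(y)-f(z)|\,dy\,dz,\qquad x\in E .
\end{equation*}
If $\mathcal M f(x)<\infty$, then the terms tend to zero, which gives the oscillation statement. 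Moreover, $|f_{Q_{2^{-k-1}}(x)}-f_{Q_{2^{-k}}(x)}|\le 2^n\fint_{Q_{2^{-k}}(x)}\fint_{Q_{2^{-k}}(x)}|f(y)-f(z)|\,dy\,dz$, so the dyadic averages converge. Intermediate radii are handled by comparing $Q_r(x)$ with $Q_{2^{-k}}(x)$ for $2^{-k-1}<r\le2^{-k}$. So the theorem reduces to proving $\mathcal M f(x)<\infty$ for a.e.\ $x\in E$.

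To get this, fix $N\in\mathbb N$ and let $E_N$ be the set of $x\in E\cap Q_N(0)$ for which $\bm\gamma(Q_r(x))\ge N^{-1}r^{d+\theta}$ for all $r\in(0,1)$. By hypothesis \eqref{eq.Tr_reg}, $\mathcal H^d\bigl(E\setminus\bigcup_N E_N\bigr)=0$, so it suffices to show $\int_{E_N}\mathcal M f\,d\mathcal H^d<\infty$ for each $N$. For one term, $\fint_{Q}\fint_{Q}|f(y)-f(z)|\,dy\,dz$ with $Q=Q_{2^{-k}}(x)$, I will bound it by $C\,\fint_{Q'}\overline\Delta_{t}f(y)\,dy$ with $t\sim 2^{-k}$ and $Q'$ a slightly larger cube. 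Hölder's inequality with respect to $\bm\gamma$, together with the $A_p^{\operatorname{loc}}$ condition (the standard estimate $\bigl(\fint_Q|g|\bigr)^p\le C\fint_Q|g|^p\,d\bm\gamma$, valid also for $p=1$), then gives
\begin{equation*}
\fint\limits_{Q'}\overline\Delta_t f\,dy\le C\Bigl(\frac{1}{\bm\gamma(Q')}\int\limits_{Q'}(\overline\Delta_t f)^p\,d\bm\gamma\Bigr)^{1/p}\le C N^{1/p}\,2^{k(d+\theta)/p}\Bigl(\int\limits_{Q'}(\overline\Delta_t f)^p\,d\bm\gamma\Bigr)^{1/p}.
\end{equation*}
This step uses nondegeneracy at $x\in E_N$.

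Next I will integrate over $E_N$ using Hölder's inequality on the bounded set $E_N$: $\int_{E_N}(\cdot)\le C\bigl(\int_{E_N}(\cdot)^p\bigr)^{1/p}$. Then Fubini and Ahlfors regularity give $\int_{E}\int_{Q'(x)}g\,d\bm\gamma\,d\mathcal H^d(x)\le C2^{-kd}\int_{\mathbb R^n}g\,d\bm\gamma$, because $\mathcal H^d(E\cap Q_{c2^{-k}}(y))\le C2^{-kd}$. Combining, the $k$-th term integrates to at most $C_N\,2^{k\theta/p}\|\overline\Delta_{c2^{-k}}f\|_{L_p(\mathbb R^n,\gamma)}$. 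Summing over $k$ and using monotonicity in $t$ up to constants, $\|\overline\Delta_{t}f\|\lesssim\|\overline\Delta_{2t}f\|$ (via an averaging-over-cubes argument), the sum is controlled by $\int_0^1 t^{-\theta/p}\|\overline\Delta_tf\|_{L_p(\gamma)}\frac{dt}{t}$, i.e., by $\|f\|_{b^{\theta/p}_{p,1}}$. The scale $k=0$ term and the large-$t$ ranges are absorbed by $\|f\|_{L_p(\gamma)}$, using $\gamma^{-p'/p}\in L_1^{\operatorname{loc}}$.

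The main obstacle is the pointwise-to-averaged step: bounding $\fint_Q\fint_Q|f(y)-f(z)|$ by an average of $\overline\Delta_t f$ with a comparable $t$, and controlling the discrepancy between $\bm\gamma(Q')$ and $\bm\gamma(Q_{2^{-k}}(x))$. I would handle the second point using the doubling property of $A_p^{\operatorname{loc}}$ weights on cubes of side at most $1$, and the first by writing $z=y+h$ with $h\in 2\cdot2^{-k}I^n$. The remaining technical point is comparing $\overline\Delta_t$ across dyadically comparable $t$, which I expect to follow from a standard triangle-inequality averaging argument.
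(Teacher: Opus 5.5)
Your proposal is correct, and its core estimate coincides with the paper's Proposition~\ref{Prop.tr_finit_def}: dyadic telescoping, domination of each term by $\fint_{Q_{2^{-k}}(x)}\overline{\Delta}_{2^{-k+1}}f\,dy$, the $A_p^{\operatorname{loc}}$ averaging inequality, the lower bound $\bm{\gamma}(Q_{2^{-k}}(x))\gtrsim 2^{-k(d+\theta)}$ on an exhausting family of sets, and Fubini with the upper Ahlfors bound. The paper takes the $L_p(K_{R,m})$ norm with Minkowski's inequality, whereas you take the $L_1(E_N)$ norm after H\"older; this difference is immaterial.

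The genuine difference is in the Lebesgue-point step. In Proposition~\ref{Prop.tr_ext_step_2} the paper proves $\fint_{Q_r(x)}|f-\phi(x)|\,dy\to0$ by a separate argument. It forms $g=\sum_k2^{k\theta/p}\overline{\Delta}_{2^{-k}}f\in L_p(\mathbb R^n,\gamma)$ and applies the density theorem (Theorem~\ref{Th.leb_points_ref}) to $|g|^p\gamma$. Your functional $\mathcal Mf$ is built from mean oscillations rather than from differences of consecutive averages. It therefore dominates both those differences and the oscillations $\fint_{Q_{2^{-k}}(x)}|f-f_{Q_{2^{-k}}(x)}|\,dy$. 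Its a.e.\ finiteness thus gives the Cauchy property and the vanishing oscillation at once, since the terms of a convergent series tend to zero. Your route is more economical and self-contained, because it avoids the external density theorem. The paper's route only separates the two properties and gains nothing essential for the theorem as stated.

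The points you flag as the main obstacle are not real obstacles. Since $|Q_{2^{-k}}(x)|=(2^{-k+1})^n$, one has directly $\fint_Q\fint_Q|f(y)-f(z)|\,dy\,dz\le\fint_Q\overline{\Delta}_{2^{-k+1}}f\,dy$ with $Q=Q_{2^{-k}}(x)$. So no enlarged cube and no comparison of $\bm{\gamma}$-masses of different cubes is needed. The comparison across scales is just the pointwise bound $\overline{\Delta}_sf\le(t/s)^n\overline{\Delta}_tf$ for $s\le t$, as in Remark~\ref{Rm.equiv_seminorm}, and needs no averaging argument.
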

\par
Related questions concerning fine representatives, Lebesgue points, and exceptional sets for Besov spaces are often studied by capacitary methods (see, for example, \cite{ Costea, Li, Netrusov, Nuutinen}). However, those results are not directly applicable to the present weighted Besov spaces, and therefore we give a self-contained proof of Theorem~\ref{Th.tr_exist_stat} in Section~3.
\par
Whenever the trace operator is well defined on
\(B^{\frac{\theta}{p}}_{p,1}(\mathbb R^n,\gamma)\), we define the corresponding
\emph{trace-space} by
\begin{equation}
B^{\frac{\theta}{p}}_{p,1}(\mathbb R^n,\gamma)\big|_E
:=
\left\{
\operatorname{Tr}f:
f\in B^{\frac{\theta}{p}}_{p,1}(\mathbb R^n,\gamma)
\right\}.
\end{equation}
As usual, this space is equipped with the quotient-space norm, i.e.,
\begin{equation}
    \|\phi\|_{B^{\frac{\theta}{p}}_{p,1}(\mathbb R^n,\gamma)\big|_E}
:=
\inf\Bigl\{
\|f\|_{B^{\frac{\theta}{p}}_{p,1}(\mathbb R^n,\gamma)}:
f\in B^{\frac{\theta}{p}}_{p,1}(\mathbb R^n, \gamma),\ \operatorname{Tr} f=\phi
\Bigr\}.
\end{equation}
Now, the trace problem for the Besov space
\(B^{\frac{\theta}{p}}_{p,1}(\mathbb R^n,\gamma)\) can be stated as follows.

\begin{Problem}
Let $E \subset\mathbb{R}^n$ be Ahlfors--David $d$-regular, $d\in (0, n)$. Given $p\in [1, \infty)$, a weight $\gamma\in A_p^{\operatorname{loc}}(\mathbb{R}^n)$, and $\theta\in (0, p)$, assume that the hypotheses of Theorem~\ref{Th.main_st} are satisfied, so that the trace operator is
well defined on \(B^{\frac{\theta}{p}}_{p,1}(\mathbb R^n,{\gamma})\). 
    \begin{enumerate}
        \item Given a function \(\phi:E\to\mathbb R\), find necessary and sufficient conditions for the existence of an extension of $\phi$, i.e., \(f\in B^{\frac{\theta}{p}}_{p,1}(\mathbb R^n,\gamma)\) such that \(\operatorname{Tr}f = \phi\).
        \item Find an intrinsic norm on the trace-space
        \(B^{\frac{\theta}{p}}_{p,1}(\mathbb R^n,\gamma)\big|_E\)
        which is equivalent to the quotient-space norm.
        \item Does there exist a bounded operator
        \begin{equation}
        \operatorname{Ext}:
        B^{\frac{\theta}{p}}_{p,1}(\mathbb R^n,\gamma)\big|_E
        \to
        B^{\frac{\theta}{p}}_{p,1}(\mathbb R^n,\gamma),
        \end{equation}
        called an extension operator, such that
        \begin{equation}
        \operatorname{Tr}\circ\operatorname{Ext}=\operatorname{Id}
        \quad
        \text{on }
        B^{\frac{\theta}{p}}_{p,1}(\mathbb R^n,\gamma)\big|_E?
        \end{equation}
    \end{enumerate}
\end{Problem}
In the present paper, we solve this problem and construct a \emph{bounded} nonlinear extension operator for the class of sets \(E\) that are almost regular with respect to the weight \(\gamma\), in the sense specified below. The nonlinear character of this construction is natural in the limiting case: already in the classical endpoint case considered by Burenkov and Gol'dman \cite{Bur_art}, the corresponding extension operator has to be nonlinear. The same obstruction is present even for the case of power-type weights covered by \cite{Har_Sch}. Indeed, if \(\gamma(x)=|x|^\alpha\), \(x\in\mathbb R^n\), with \(-(n-1)<\alpha<n(p-1)\) and \(p\in(1,\infty)\), then on every cube separated from the origin the weight is comparable to a positive constant. Thus, locally away from the origin, the weighted endpoint trace problem reduces to the classical unweighted one. This indicates that bounded linear extension operators should not be expected in the present limiting setting. However, in the present paper we do not address a general nonexistence theorem for bounded linear right inverses.
\par
In the unweighted case, the critical smoothness in the trace problem is governed by the geometric codimension \(n-d\). In the weighted setting, however, the ambient weight may change the effective codimension of the set. For instance, if $E$ is an Ahlfors--David $d$-regular set, $d \in (0, n)$, and
\(\gamma(x)=\operatorname{dist}(x,E)^\alpha\), \(x\in\mathbb{R}^n\), with $\alpha>-(n-d)$, then, for \(x\in E\), $\bm{\gamma}(Q_r(x))\approx r^{n+\alpha}$, and the relevant codimension is \(\theta=n-d+\alpha\), rather than \(n-d\). Thus, besides the Ahlfors--David regularity of \(E\), one needs a compatibility condition between the ambient weight \(\gamma\) and a boundary measure on \(E\). In the special cases treated in \cite{Har_Sch,Iwon}, this compatibility is encoded in the explicit form of the weights. Here we formulate it abstractly as follows. Given \(\theta\in(0,\infty)\), an Ahlfors--David \(d\)-regular set \(E\), \(d\in(0,n)\), and a weight \(\gamma\) on $\mathbb{R}^n$, we say that \(E\) is \emph{Ahlfors--David codimension-\(\theta\) regular with respect to \(\gamma\)} if there exist constants \(C_1,C_2>0\) and a weight \(\bar{\gamma}\) on \(E\) such that
\begin{equation}
        C_1\,\frac{\bm{\gamma}(Q_r(x))}{r^{\theta}}
        \le
        \bm{\bar{\gamma}}(Q_r(x)\cap E)
        \le
        C_2\,\frac{\bm{\gamma}(Q_r(x))}{r^{\theta}},
        \qquad \text{for all } (x,r)\in E\times(0,1],
\end{equation}
where $\bm{\gamma}$ and $\bm{\bar{\gamma}}$ are weighted measures defined in \eqref{eq.weight_meas_def}.
Related compatibility conditions appear in trace theorems on metric measure spaces (see, e.g., \cite{Luk_e}). In the usual formulation of Ahlfors--David coregularity, one may prescribe a measure on \(E\). In this setting, traces of Besov spaces to regular subsets of metric measure spaces have also been extensively studied (see, e.g., \cite{Gul,Ihnat,Jon,mig,Sak_Sot}). In the present paper we restrict ourselves to the weighted measures $\bm{\bar\gamma}$. This choice is natural for the trace problem considered here, since traces to \(E\) are defined up to \(\mathcal H^d\lfloor_E\)-null sets, and the above measures have the same null sets as \(\mathcal H^d\lfloor_E\).
\par
We aim to relax this condition. We note that a necessary condition for Ahlfors--David codimension-\(\theta\) regularity is
\begin{equation}
    \lim_{r\to0}\frac{\bm{\gamma}(Q_r(x))}{r^\theta}=0
\end{equation}
for all \(x\in E\). Since our main motivating example is $E = \mathbb{R}^{n-1}$, \(\gamma(x)=|x|^\alpha\), where
\(-n<\alpha<-(n-1)\), we need a weakened form of regularity that permits controlled divergences at isolated points. For technical reasons, we impose a specific rate of divergence. More precisely, given \(p\in[1,\infty)\) and \(\theta\in(0,\infty)\), we say that \(x\in\mathbb R^n\) is a point of \emph{\(p\)-rapid singularity of degree \(\theta\)} of the weight \(\gamma\) if
\begin{equation}
        \lim_{r\to0}\frac{\bm{\gamma}(Q_r(x))}{r^\theta}=\infty
\end{equation}
and, for \(p>1\),
\begin{equation}
        \sup_{\rho\in(0,1)}
        \left(\int\limits_{\rho}^1
        \frac{\bm{\gamma}(Q_r(x))}{r^\theta}\frac{dr}{r}\right)^{\frac{1}{p}}
        \left(\int\limits_0^\rho
        \left(\frac{\bm{\gamma}(Q_r(x))}{r^\theta}\right)^{-\frac{1}{p-1}}
        \frac{dr}{r}\right)^{\frac{p-1}{p}}
        <\infty,
\end{equation}
while, for \(p=1\),
\begin{equation}
      \sup_{\rho\in(0,1)}
      \left(\int\limits_\rho^1
      \frac{\bm{\gamma}(Q_r(x))}{r^\theta}\frac{dr}{r}\right)
      \left(\frac{\bm{\gamma}(Q_\rho(x))}{\rho^\theta}\right)^{-1}
      <\infty.
\end{equation}
We denote the set of \(p\)-rapid singular points of degree \(\theta\) of \(\gamma\) by
\(RS_{p,\theta}(\gamma)\).
\par
Let us indicate two model cases. Let \(E=\mathbb R^d\subset\mathbb R^n\), $d\in (0, n)$, and let \(\gamma(x)=|x|^\alpha\). If \(\alpha>-d\), then the restriction
\(\bar\gamma(x)=|x|^\alpha\), \(x\in\mathbb R^d\), is locally integrable on \(E\), and
\(E\) is Ahlfors--David codimension-\((n-d)\) regular with respect to \(\gamma\). In this case $RS_{p, \theta}(\gamma) = \emptyset$. This is
the situation covered by the results of \cite{Har_Sch}.

If, on the other hand, \(-n<\alpha<-d\), then the restriction
\(\bar\gamma(x)=|x|^\alpha\) is not locally integrable near the origin on \(E\). In this
case \(RS_{p,n-d}(\gamma)=\{0\}\), and the usual Ahlfors--David codimension-\((n-d)\) regularity condition
fails at the origin. This is the basic model for the almost regular situation considered
below.
\par
Now we introduce the following relaxation of the Ahlfors--David coregularity condition. We say that an Ahlfors--David \(d\)-regular set \(E\subset\mathbb R^n\) is \emph{Ahlfors--David codimension-\(\theta\) almost regular with respect to \(\gamma\)} if the following conditions hold:
\begin{enumerate}
    \item \(S:=RS_{p,\theta}(\gamma)\cap E\) is finite, possibly empty;
    \item there exists a weight \(\bar{\gamma}\) on \(E\), called the \emph{boundary weight}, and constants \(C_1,C_2>0\) such that
    \begin{equation}
        C_1\,\frac{\bm{\gamma}(Q_r(x))}{r^\theta}
        \le \bm{\bar{\gamma}}(Q_r(x)\cap E)
        \le C_2\,\frac{\bm{\gamma}(Q_r(x))}{r^\theta}, \qquad \begin{gathered} x\in E\setminus S,\\ 0<r\le \min\left\{\frac12\operatorname{dist}(x,S),1\right\}. \end{gathered}
    \end{equation}
\end{enumerate}
In essence, this condition means that the pair \((E,\bar{\gamma})\) satisfies the usual Ahlfors--David coregularity condition away from a finite set of points, while near those points the measure is allowed to have a prescribed type of divergence.
\par
If \(\gamma(x)=|x|^\alpha\), where \(-n<\alpha<-(n-1)\), then a simple example of a function
\(f\in B^{\frac{1}{p}}_{p,1}(\mathbb R^n,\gamma)\) that is equal to \(1\) in a neighborhood of the origin shows that
\(\operatorname{Tr}f\notin L_p(\mathbb R^{n-1},{\bar{\gamma}})\), because
\(\bar{\gamma}\notin L_1^{\operatorname{loc}}(\mathbb R^{n-1})\). Therefore, near rapid singular points the usual local integrability condition has to be replaced by a different requirement. To this end, we introduce a weighted version of the Lebesgue point condition.
\par
First, we define the separation scale of the finite set \(S=RS_{p,\theta}(\gamma)\cap E\) by
\begin{equation}
\rho_S:=
\begin{cases}
\min\left\{1,\frac12\min\limits_{\substack{x,y\in S\\ x\ne y}}|x-y|\right\},
& \text{if } \#S\ge 2,\\[0.2cm]
1,
& \text{if } \#S\le 1.
\end{cases}
\end{equation}
For each subset \(G\subset E\) and each \(\rho>0\), we write $Q^E_\rho(G):=\{x\in E:\operatorname{dist}(x,G)\le \rho\}$. Given an equivalence class of measurable functions \(\phi:E\to\mathbb R\), we say that
\(x_0\in S\) is a \emph{generalized weighted Lebesgue point} of \(\phi\) if there exists \(a\in\mathbb R\) such that $\phi-a\in L_p(Q^E_{\rho_S}(x_0),{\bar{\gamma}})$. Since \(\bm{\bar{\gamma}}(Q_r^E(x_0))=\infty\) for all \(r>0\) (see Section~2 for details), this condition uniquely determines the value \(a\), which will be denoted by \(\phi(x_0)\).
Our \textit{second main} result reads as follows.
\begin{Th}
    \label{Th.main_st}
     Let \(p\in[1,\infty)\) and let \(\gamma\in A_p^{\operatorname{loc}}(\mathbb R^n)\). Given \(d\in(0,n)\) and \(\theta\in(0,p)\), assume that an Ahlfors--David \(d\)-regular set \(E\) is Ahlfors--David codimension-\(\theta\) almost regular with respect to \(\gamma\). Let $S:=RS_{p, \theta}(\gamma)\cap E$.
     A function \(\phi\) belongs to the trace-space
     \(B^{\frac{\theta}{p}}_{p,1}(\mathbb R^n,\gamma)\big|_E\) if and only if
     \begin{enumerate}
         \item \(\phi\in L_p(E\setminus Q^E_{\rho_S}(S),{\bar{\gamma}})\);
         \item \(\phi\) has generalized weighted Lebesgue points at each
         \(x_0\in S\).
     \end{enumerate}
     Moreover, the following equivalence holds:
     \begin{equation}
     \label{eq.main_th_st}
     \begin{split}
         \|\phi\|_{B^{\frac{\theta}{p}}_{p,1}(\mathbb R^n,\gamma)\big|_E}
         \approx
         &\ \|\phi\|_{L_p(E\setminus Q^E_{\rho_S}(S),{\bar{\gamma}})} \\
         &\ +\sum_{x_0\in S}
         \left(
         |\phi(x_0)|
         +
         \|\phi-\phi(x_0)\|_{L_p(Q^E_{\rho_S}(x_0),\bar{\gamma})}
         \right),
     \end{split}
     \end{equation}
     with constants depending only on the structural constants of the assumptions.
\end{Th}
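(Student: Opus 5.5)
We prove the two inequalities in \eqref{eq.main_th_st} separately. The \emph{trace inequality} says that for every $f\in B^{\frac{\theta}{p}}_{p,1}(\mathbb R^n,\gamma)$ the function $\phi=\operatorname{Tr}f$ satisfies (1), (2) and the right-hand side is bounded by $C\|f\|_{B^{\frac{\theta}{p}}_{p,1}(\mathbb R^n,\gamma)}$. The \emph{extension inequality} is a nonlinear construction of $f$ with $\operatorname{Tr}f=\phi$ and norm controlled by the right-hand side. Throughout we use a dyadic lattice of cubes $Q$ with $\ell(Q)=2^{-k}\le 1$ and the averages $f_Q:=\fint_Q f$. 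The local $A_p$ condition gives the doubling property of $\bm\gamma$ on small cubes. It also gives the weighted Poincaré-type estimate $|f_Q-f_{Q'}|^p\,\bm\gamma(Q)\lesssim \int_{\widetilde Q}\overline{\Delta}_{c\ell(Q)}f(x)^p\,d\bm\gamma(x)$ for neighbouring cubes $Q'$ of comparable or half size, where $\widetilde Q$ is a fixed dilate of $Q$. This estimate comes from Hölder's inequality together with the $A_p$ bound $(\fint_Q|g|)^p\bm\gamma(Q)\lesssim\int_Q|g|^p\,d\bm\gamma$.

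For the trace inequality, Theorem~\ref{Th.tr_exist_stat} gives $\phi(x)=\lim_k f_{Q_{2^{-k}}(x)}$ a.e. on $E$, since \eqref{eq.Tr_reg} holds off $S$ by almost regularity. Telescoping,
\begin{equation*}
|\phi(x)-f_{Q_{1}(x)}|\le\sum_{k\ge0}|f_{Q_{2^{-k}}(x)}-f_{Q_{2^{-k-1}}(x)}|.
\end{equation*}
We take the $L_p(\bar\gamma)$ norm over $E\setminus Q^E_{\rho_S}(S)$ and apply Minkowski's inequality; this is where $q=1$ is used. Each term is handled by covering $E$ with cubes of size $2^{-k}$ of bounded overlap. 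On each such cube the codimension condition gives $\bm{\bar\gamma}(Q\cap E)\approx 2^{k\theta}\bm\gamma(Q)$. The $k$-th term is therefore bounded by $2^{k\theta/p}\|\overline\Delta_{c2^{-k}}f\|_{L_p(\gamma)}$, and summing over $k$ gives the Besov seminorm. The term $f_{Q_1(x)}$ is controlled by $\|f\|_{L_p(\gamma)}$ in the same way.

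Near $x_0\in S$ the codimension condition only holds for $r\le \frac12\operatorname{dist}(x,S)$. For $x\in Q^E_{\rho_S}(x_0)$ we therefore chain in two stages. First we go from $x$ up to the Whitney scale $r_x\approx|x-x_0|$. Then we go along the cubes $Q_{2^{-j}}(x_0)$ from scale $r_x$ down to $0$, which identifies $a=\lim_j f_{Q_{2^{-j}}(x_0)}$. This limit exists, and $|a|\lesssim\|f\|_B$, by the same telescoping combined with $\bm\gamma(Q_r(x_0))/r^\theta\to\infty$. The second stage produces the Hardy-type term
\begin{equation*}
\Big\|\sum_{2^{-j}\le |x-x_0|}|f_{Q_{2^{-j}}(x_0)}-f_{Q_{2^{-j-1}}(x_0)}|\Big\|_{L_p(\bar\gamma)} .
\end{equation*}
Using $\bm{\bar\gamma}(\{2^{-j-1}<|x-x_0|\le 2^{-j}\})\approx \bm\gamma(Q_{2^{-j}}(x_0))2^{j\theta}$ (obtained from the codimension condition on Whitney cubes and doubling), this becomes a discrete weighted Hardy inequality with weight $w(r)=\bm\gamma(Q_r(x_0))/r^\theta$. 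The rapid-singularity condition is exactly Muckenhoupt's characterization of the Hardy inequality: the $p>1$ condition gives the $L_p$ case and the $p=1$ condition gives the $L_1$ case. Applying it yields $\|\phi-a\|_{L_p(Q^E_{\rho_S}(x_0),\bar\gamma)}\lesssim\|f\|_B$. This Hardy step is where I expect the main difficulty, in particular matching the discrete sums with the integral conditions and handling the tail near $\rho_S$.

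For the extension inequality, we first write $\phi=\sum_{x_0\in S}\phi(x_0)\eta_{x_0}+\psi$, where the $\eta_{x_0}$ are smooth cutoffs equal to $1$ near $x_0$. Each cutoff lies in $B^{\frac{\theta}{p}}_{p,1}(\mathbb R^n,\gamma)$ by the local $A_p$ doubling. By Minkowski's inequality, $\|\psi\|_{L_p(E,\bar\gamma)}$ is bounded by the right-hand side of \eqref{eq.main_th_st}, so it remains to extend $\psi\in L_p(E,\bar\gamma)$, now with no singular points left. We do this with a Burenkov--Gol'dman-type nonlinear extension. First choose a decreasing sequence of scales $\delta_k\downarrow0$ adapted to $\psi$, for instance using the approximation modulus $\|\psi-\psi_k\|_{L_p(\bar\gamma)}$, where $\psi_k$ are Whitney-type averages at scale $2^{-k}$ (with respect to $\bar\gamma$) that converge to $\psi$. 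Then set
\begin{equation*}
f=\sum_k(\psi_{k}-\psi_{k-1})^{\sim}\chi_k ,
\end{equation*}
where $(\cdot)^\sim$ denotes a Whitney extension and $\chi_k$ are cutoffs to neighbourhoods of $E$ of width $\delta_k$. The scales are chosen so that $\sum_k \delta_k^{-\theta/p}\,\bm\gamma(\text{layer}_k)^{1/p}\|\psi_k-\psi_{k-1}\|$ is controlled by $\|\psi\|_{L_p(\bar\gamma)}$, via the identity $\bm\gamma(Q)\approx\ell(Q)^\theta\bm{\bar\gamma}(Q\cap E)$ on cubes away from $S$. Finally, verifying that $\operatorname{Tr}f=\psi$ uses Lebesgue differentiation for the doubling measure $\bar\gamma$.
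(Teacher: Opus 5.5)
Your necessity half follows the paper. The paper also telescopes dyadic averages and combines Fubini with almost regularity off $S$ (Lemma~\ref{Lm.0dir}). Near $x_0\in S$ it likewise splits into oscillations around $x$ at scales $\lesssim|x-x_0|$ and oscillations at $x_0$. The latter are fed into a discrete Hardy inequality whose Muckenhoupt condition is the discretised rapid-singularity condition (Lemmas~\ref{Lm.an_h} and~\ref{Lm.ren_dir}). Only the bound $\bm{\bar\gamma}(A_k(x_0))\lesssim 2^{k\theta}\bm\gamma(Q_{2^{-k}}(x_0))$ is needed. Your two-sided $\approx$ can fail, because annuli of ratio $2$ may miss a Cantor-like $E$.

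The gap is in the extension half. Subtracting $\sum_{x_0\in S}\phi(x_0)\eta_{x_0}$ is a valid reduction to data $\psi\in L_p(E,\bar\gamma)$ with $\psi(x_0)=0$. However, the claim that ``no singular points are left'' is false: $S$ is determined by the pair $(\gamma,E)$, not by $\phi$. After the reduction the following still hold.
\begin{itemize}
\item $\bm{\bar\gamma}(Q^E_r(x_0))=\infty$ for every $r>0$ (Lemma~\ref{Lm.alm_reg_bas_pr}).
\item The comparison $\bm\gamma(Q)\approx\ell(Q)^\theta\bm{\bar\gamma}(\lambda Q\cap E)$ and the doubling of $\bm{\bar\gamma}$ are available only for cubes whose distance to $S$ is large compared with $\ell(Q)$.
\item For $p>1$, a function in $L_p(E,\bar\gamma)$ need not be $\bm{\bar\gamma}$-integrable near $x_0$.
\end{itemize}
Consequently your $\bar\gamma$-averages $\psi_k$ are undefined on patches near $S$. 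The differentiation argument giving $\psi_k\to\psi$ is not available there. The conversion of the $L_p(\gamma)$-norms of the layer terms into $L_p(\bar\gamma)$-norms on $E$ breaks down exactly for the cubes your sketch leaves out.

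This is where the paper's inverse theorem does its real work.
\begin{itemize}
\item Cubes with $\operatorname{dist}(Q,S)<\sigma2^{-k}$ are declared singular and receive the coefficient $\phi(x_0(Q))$; in your normalisation this would be $0$.
\item A geometric check shows that a fine singular cube never meets a coarse regular one. It also shows that interacting singular cubes share the same $x_0$, so those differences vanish.
\item The regular--singular interactions are bounded by $2^{-k_l\theta}\sum_{x_0\in S}\|\phi-\phi(x_0)\|^p_{L_p(\mathcal I_{l-1}(x_0),\bar\gamma)}$. This tail is built into the $\phi$-dependent choice of scales \eqref{eq.ext_def1_tail}.
\item Smallness of the oscillation functional is proved only on $E\setminus Q^E_r(S)$, with the excluded neighbourhood shrinking together with the scale (Proposition~\ref{Prop.cruc_prop}).
\end{itemize}
Your construction needs these ingredients before the Burenkov--Gol'dman summation can be carried out. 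Concretely: use coefficient $0$ on singular cubes, and add the tail $\|\psi\|_{L_p(Q^E_{C2^{-k}}(S),\bar\gamma)}$ to the criterion for selecting scales.
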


\begin{Remark}
    Assume that \(S=RS_{p, \theta}(\gamma)\cap E=\emptyset\). Then
    \(Q^E_{\rho_S}(S)=\emptyset\), and the previous theorem reduces to the usual weighted \(L_p\)-description of the trace-space. In particular, for \(E=\mathbb R^{n-1}\) and
    \(\gamma(x)=|x|^\alpha\) with \(-(n-1)<\alpha<n(p-1)\), we have the equality $B^{\frac{1}{p}}_{p,1}(\mathbb R^n,\gamma)\big|_{\mathbb R^{n-1}}
    =
    L_p(\mathbb R^{n-1},{\bar{\gamma}})$ as linear spaces, the corresponding norms being equivalent, i.e.,
    \begin{equation}
          \|\phi \|_{B^{\frac{1}{p}}_{p,1}(\mathbb R^n,\gamma)\big|_{\mathbb R^{n-1}}}
          \approx
          \|\phi \|_{L_p(\mathbb R^{n-1},{\bar{\gamma}})}, \qquad \phi \in B^{\frac{1}{p}}_{p,1}(\mathbb R^n,\gamma)\big|_{\mathbb R^{n-1}}. 
    \end{equation}
    Thus, we recover the result of Haroske and Schmeisser \cite{Har_Sch}.
\end{Remark}

\subsection{Plan of the paper}
This paper is organized as follows.
\begin{itemize}
    \item In Section~2, we fix notation, recall standard definitions and known facts, and collect a few auxiliary lemmas.
    \item In Section~3, we prove Theorem~\ref{Th.tr_exist_stat}, showing that, under the nondegeneracy hypothesis, the trace operator is well defined on \(B^{\frac{\theta}{p}}_{p,1}(\mathbb{R}^n,\gamma)\).
    \item In Section~4, we prove the necessity part of Theorem~\ref{Th.main_st} and establish the upper bound in \eqref{eq.main_th_st}.
    \item In Section~5, we prove the sufficiency part of Theorem~\ref{Th.main_st} and obtain the lower bound in \eqref{eq.main_th_st} by constructing a bounded nonlinear extension operator.
    \item In Section~6, we provide several examples illustrating the theory developed in this paper. In particular, we discuss the weights considered in \cite{Har_Sch, Iwon}.
\end{itemize}
\section{Preliminaries}
The aim of this section is to fix notation, recall definitions, and prove some auxiliary results needed later.
\par
Throughout the paper, \(C\) will denote a generic positive constant that may vary from line to line. If the constant \(C\) depends on certain parameters, say \(a,b,c,\ldots\), we indicate this by writing \(C(a,b,c,\ldots)\). The notation \(A\lesssim B\) or \(B\gtrsim A\) means that \(A\le CB\). If \(A\lesssim B\) and \(B\lesssim A\), we write \(A\approx B\).
\subsection{Geometric notation}
Throughout the rest of the paper, we fix \(n\in\mathbb{N}\) with \(n\ge 2\). Points of \(\mathbb{R}^n\) will be denoted by Latin letters such as \(x,y,w\). We denote by \(Q_r(x)\) the closed cube centered at \(x\) of edge length \(2r>0\). For a constant \(c>0\) and a cube \(Q=Q_r(x)\), we write \(cQ:=Q_{cr}(x)\) for the corresponding dilation about the center. Given a nonempty set \(E\subset\mathbb{R}^n\) and \(x\in E\), we write \(Q^E_r(x):=Q_r(x)\cap E\) for the relative cube in \(E\). For convenience in the proofs, we primarily work with cubes rather than balls. Accordingly, unless explicitly stated otherwise, \(|x-y|\) will denote the \(\ell_\infty\)-distance \(|x-y|_\infty\).
\par
Given a nonempty set \(G\subset\mathbb{R}^n\) and \(x\in\mathbb{R}^n\), we set $\operatorname{dist}(x,G):=\inf\{|x-y|:y\in G\}$.
Furthermore, given two nonempty sets \(G_1,G_2\subset\mathbb{R}^n\), we define $\operatorname{dist}(G_1,G_2)
:=
\inf\{|x-y|:x\in G_1,\ y\in G_2\}$.
We use the convention \(\operatorname{dist}(x,\emptyset)=+\infty\). For \(r>0\), we define the closed \(r\)-neighborhood of \(G\) by
$Q_r(G):=\{x\in\mathbb{R}^n:\operatorname{dist}(x,G)\le r\}$. If \(G=\emptyset\), we set \(Q_r(G):=\emptyset\) for each \(r>0\). If \(G\subset E\), then we write
$Q^E_r(G):=Q_r(G)\cap E
=
\{x\in E:\operatorname{dist}(x,G)\le r\}$
for the \(r\)-neighborhood of \(G\) in \(E\).
\par
We shall use Hausdorff measures to control exceptional sets in the proof of the existence of traces. For this reason, we recall the definition and some basic properties of the Hausdorff measure. For \(d\in[0,n]\) and a set \(E\subset\mathbb{R}^n\), define, for \(\delta>0\),
\begin{equation}
    \mathcal{H}^d_{\delta}(E)
    =
    \inf\left\{
    \sum_i r_i^d:
    E\subset \bigcup_i Q_{r_i}(x_i),\ r_i\in(0,\delta)
    \right\},
\end{equation}
where the infimum is taken over all at most countable coverings of \(E\) by cubes \(\{Q_{r_i}(x_i)\}\). The \(s\)-dimensional Hausdorff measure of \(E\) is then
\begin{equation}
    \mathcal{H}^d(E)
    =
    \lim_{\delta\to0}\mathcal{H}^d_{\delta}(E).
\end{equation}
It is well known that, up to a dimensional constant, the \(n\)-dimensional Hausdorff measure coincides with the \(n\)-dimensional Lebesgue measure (see, e.g., \cite[Chapter~2]{law}).
\par
We shall also use Ahlfors--David regular subsets of \(\mathbb{R}^n\).
\begin{Def}
    Given \(d\in(0,n]\), a closed set \(E\subset\mathbb{R}^n\) is called Ahlfors--David \(d\)-regular if there are constants \(C_1,C_2>0\) such that
    \begin{equation}
        C_1r^d
        \le
        \mathcal{H}^d(Q_r(x)\cap E)
        \le
        C_2r^d
    \end{equation}
    for every \(x\in E\) and every \(r\in(0,1]\). We shall use the notation \(\mathcal{H}^d\lfloor_E\) for the restriction of the Hausdorff measure to \(E\).
\end{Def}
\par
The following standard consequence will be used in Section~3, in order to control exceptional sets on an Ahlfors--David \(d\)-regular set \(E\).
\par
\begin{Th}(\cite[Section~2.4.3, Theorem~3]{law})
    \label{Th.leb_points_ref}
    Let \(f\in L_1^{\operatorname{loc}}(\mathbb{R}^n)\), let \(d\in[0,n)\), and define
    \begin{equation}
        \Lambda_d
        =
        \left\{
        x\in\mathbb{R}^n:
        \limsup_{r\to0}
        \frac{1}{r^d}
        \int\limits_{Q_r(x)} |f(y)|\,dy
        >0
        \right\}.
    \end{equation}
    Then \(\mathcal{H}^d(\Lambda_d)=0\).
\end{Th}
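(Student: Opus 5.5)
The plan is the standard Vitali-covering argument combined with absolute continuity of the integral, so we only need to control the outer measure $\mathcal H^d$ of the exceptional set. Since $\mathcal H^d$ is countably subadditive and $f\in L_1^{\operatorname{loc}}$, we may localize: it suffices to show $\mathcal H^d(\Lambda_d\cap B)=0$ for every bounded open set $B$. Replacing $f$ by $f\chi_{2B}$ does not change the limsup at points of $B$ for small $r$, so we may assume $f\in L_1(\mathbb R^n)$. Next, write $\Lambda_d=\bigcup_{k\ge1}\Lambda_{d,1/k}$, where
\begin{equation*}
\Lambda_{d,\varepsilon}:=\Bigl\{x:\limsup_{r\to0}r^{-d}\int\limits_{Q_r(x)}|f(y)|\,dy>\varepsilon\Bigr\},
\end{equation*}
so it is enough to prove $\mathcal H^d(\Lambda_{d,\varepsilon})=0$ for each fixed $\varepsilon>0$.

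\emph{Step 1: $\Lambda_d$ has Lebesgue measure zero.} By the Lebesgue differentiation theorem, for a.e.\ $x$ the averages $\fint_{Q_r(x)}|f|$ remain bounded as $r\to0$. At such points
\begin{equation*}
r^{-d}\int\limits_{Q_r(x)}|f|\,dy = 2^n r^{n-d}\fint\limits_{Q_r(x)}|f|\,dy\to0,
\end{equation*}
because $d<n$. Hence $|\Lambda_{d,\varepsilon}|=0$.

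\emph{Step 2: covering.} Fix $\delta\in(0,1)$ and $\eta>0$. By absolute continuity of the integral and Step 1, choose an open set $U\supset\Lambda_{d,\varepsilon}$ with $\int_U|f|<\eta$. For each $x\in\Lambda_{d,\varepsilon}$, pick $r_x\in(0,\delta)$ such that $Q_{r_x}(x)\subset U$ and $\int_{Q_{r_x}(x)}|f|>\varepsilon r_x^d$. The radii are uniformly bounded, so the Vitali $5r$-covering lemma (for cubes in the $\ell_\infty$ metric) yields a countable disjoint subfamily $\{Q_i=Q_{r_i}(x_i)\}$ with $\Lambda_{d,\varepsilon}\subset\bigcup_i 5Q_i$. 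Therefore
\begin{equation*}
\mathcal H^d_{5\delta}(\Lambda_{d,\varepsilon})\le\sum_i(5r_i)^d\le \frac{5^d}{\varepsilon}\sum_i\int\limits_{Q_i}|f|\le\frac{5^d}{\varepsilon}\int\limits_U|f|<\frac{5^d\eta}{\varepsilon}.
\end{equation*}
Letting $\eta\to0$ and then $\delta\to0$ gives $\mathcal H^d(\Lambda_{d,\varepsilon})=0$, which concludes the proof.

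The only delicate point is Step 1, namely that $\Lambda_d$ is Lebesgue-null. This is exactly where $d<n$ is used: it is what allows $\int_U|f|$ to be made arbitrarily small. The covering step itself is routine. Measurability of $\Lambda_d$ is not needed, since we estimate only the outer measure.
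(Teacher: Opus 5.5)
Your proof is correct and is essentially the argument the paper relies on. The paper gives no proof of its own and cites Evans--Gariepy (Section~2.4.3, Theorem~3), whose proof is this same combination: Lebesgue differentiation gives $\mathcal L^n(\Lambda_{d,\varepsilon})=0$ (using $d<n$), absolute continuity of the integral supplies the open set $U$, and the Vitali $5r$-covering lemma then bounds $\mathcal H^d_{5\delta}$. Your only changes are harmless: you use $\ell_\infty$-cubes instead of Euclidean balls, which matches the paper's cube-based definition of $\mathcal H^d$, and you make the reduction to $f\in L_1(\mathbb R^n)$ explicit.
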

\par
We next introduce the weighted measures and the basic properties of \(A_p^{\operatorname{loc}}\)-weights used throughout the paper.
\subsection{Weights}
Given \(d\in(0,n]\) and an Ahlfors--David \(d\)-regular set \(E\subset\mathbb R^n\), including the case \(E=\mathbb R^n\) with \(d=n\), a measurable function \(\gamma:E\to\mathbb R\) is called a weight, or a weight function, if
\(\gamma(x)>0\) for \(\mathcal H^d\lfloor_E\)-almost every \(x\in E\).
Given such a weight \(\gamma\), we denote by \(\bm{\gamma}\) the corresponding weighted measure, i.e.,
\begin{equation}
    \bm{\gamma}(G)
    :=
    \int\limits_G \gamma(x)\,d\mathcal{H}^d\lfloor_E(x)
\end{equation}
for every measurable set \(G\subset E\).
\par
Let \(p\in[1,\infty]\), and let \(\gamma\) be a weight. For a measurable set \(G\subset E\), we denote by \(L_p(G,\gamma)\) the space of all equivalence classes of measurable functions \(f\) such that
\begin{equation}
    \|f\|_{L_p(G,\gamma)}
    :=
    \left(
    \int\limits_G |f(x)|^p\,d\bm{\gamma}(x)
    \right)^{\frac{1}{p}}
    <+\infty
\end{equation}
with the standard modification when \(p=\infty\). If \(\gamma\equiv 1\), we abuse notation and write \(L_p(G)\) instead of \(L_p(G,1)\). As usual, \(L_p^{\operatorname{loc}}(G,\gamma)\) stands for the space of all equivalence classes of measurable functions satisfying \(f\in L_p(K,\gamma)\) for every compact set \(K\subset G\).
\par
Let \(G\subset E\) be a measurable set with \(0<\bm{\gamma}(G)<\infty\). For each \(f\in L_1(G,\gamma)\), we set
\begin{equation}
    \fint\limits_G f(x)\,d\bm{\gamma}(x)
    :=
    \frac{1}{\bm{\gamma}(G)}
    \int\limits_G f(x)\,d\bm{\gamma}(x).
\end{equation}
In the unweighted case \(\gamma\equiv 1\), we occasionally write
\begin{equation}
    f_G
    :=
    \fint\limits_G f(x)\,dx.
\end{equation}
Throughout this paper, we assume that any given weight \(\gamma\) on \(\mathbb R^n\) is locally integrable unless otherwise stated. Note, however, that we will naturally obtain weights on lower-dimensional sets \(E\) which are not locally integrable with respect to \(\mathcal H^d\lfloor_E\).
\par
The main class of weights considered in this paper is the local Muckenhoupt class \(A_p^{\operatorname{loc}}\).
\begin{Def}[\cite{Rych}]
    \label{Def.muck}
     Given \(p\in(1,\infty)\), a weight \(\gamma:\mathbb R^n\to\mathbb R\) is said to satisfy the local \(p\)-Muckenhoupt condition if
    \begin{equation}
    \label{eq.muck_cond}
        \sup_{Q:\,\ell(Q)\le 1}
        \left(
        \fint\limits_Q \gamma(x)\,dx
        \right)
        \left(
        \fint\limits_Q \gamma(x)^{-\frac{p'}{p}}\,dx
        \right)^{\frac{p}{p'}}
        <+\infty,
    \end{equation}
    where \(p'\) is the conjugate exponent to \(p\), i.e.,
    \(\frac{1}{p}+\frac{1}{p'}=1\). The set of all such weights is denoted by
    \(A_p^{\operatorname{loc}}(\mathbb R^n)\).
    If \(p=1\), then \(A_1^{\operatorname{loc}}(\mathbb R^n)\) consists of weights satisfying
    \begin{equation}
    \label{eq.muck_cond1}
        \sup_{Q:\,\ell(Q)\le 1}
        \left(
        \fint\limits_Q \gamma(x)\,dx
        \right)
        \left(
        \operatorname*{ess\,inf}_{x\in Q}\gamma(x)
        \right)^{-1}
        <+\infty.
    \end{equation}
    In what follows,
    \(A_\infty^{\operatorname{loc}}(\mathbb R^n):=
    \bigcup_{p\ge 1}A_p^{\operatorname{loc}}(\mathbb R^n)\).
\end{Def}
\begin{Remark}
\label{Rm.muck_weight}
The particular choice of the scale \(1\) in Definition~\ref{Def.muck} is immaterial. More precisely, replacing the condition \(\ell(Q)\le 1\) in \eqref{eq.muck_cond} or \eqref{eq.muck_cond1} by \(\ell(Q)\le R\), where \(R>0\) is fixed, leads to the same class \(A_p^{\operatorname{loc}}(\mathbb R^n)\), with possibly different constants.
\par
    We shall use the following standard consequences of the \(A_p^{\operatorname{loc}}\)-condition (see, e.g., \cite{Stein} for global $A_p$ weights, the case of local $A_p$ weights follows by the same argument). First, every \(A_p^{\operatorname{loc}}\)-weight is locally doubling: for each \(R>0\), there exists a constant \(C=C(R)>0\) such that
\begin{equation}
\label{eq.double_cond}
    \bm{\gamma}(Q_{2r}(x))
    \le
    C\,\bm{\gamma}(Q_r(x))
\end{equation}
for all \((x,r)\in\mathbb R^n\times(0,R]\).
\par
Second, the \(A_p^{\operatorname{loc}}\)-condition is equivalent to the weighted averaging inequality
\begin{equation}
\label{eq.equiv_muck}
    \left(
    \fint\limits_Q f(x)\,dx
    \right)^p
    \le
    \frac{C}{\bm{\gamma}(Q)}
    \int\limits_Q f(x)^p\,d\bm{\gamma}(x)
\end{equation}
for all nonnegative functions \(f\) and all cubes \(Q\subset\mathbb R^n\) with \(\ell(Q)\le 1\).
\end{Remark}
Although our results apply to a broader class of weights, power weights provide the simplest motivating examples for the regularity assumptions introduced below.
\begin{Example}
    Let \(\gamma(x)=|x|^\alpha\). If \(p\in(1,\infty)\), then
    \(\gamma\in A_p^{\operatorname{loc}}(\mathbb R^n)\) if and only if
    \(-n<\alpha<n(p-1)\). If \(p=1\), then
    \(\gamma\in A_1^{\operatorname{loc}}(\mathbb R^n)\) if and only if
    \(-n<\alpha\le 0\).
\end{Example}
\par
The \(A_p^{\operatorname{loc}}\)-condition controls the ambient weighted measure \(\bm{\gamma}\) in \(\mathbb R^n\). For trace problems, however, one also needs a compatibility condition between the ambient measure \(\bm{\gamma}\) and a measure on the set to which the trace is taken. We introduce the following weighted version of the Ahlfors--David coregularity condition.
\begin{Def}
\label{Def.reg}
    Given \(d\in(0,n)\), \(\theta\in(0,\infty)\), and a weight \(\gamma\) on \(\mathbb R^n\), we say that an Ahlfors--David \(d\)-regular set \(E\) is Ahlfors--David codimension-\(\theta\) regular with respect to \(\gamma\) if there exists a weight
    \(\bar{\gamma}\in L_1^{\operatorname{loc}}(E)\) such that, for some constants \(C_1,C_2>0\),
    \begin{equation}
    \label{eq.reg_cond}
        C_1\frac{\bm{\gamma}(Q_r(x))}{r^\theta}
        \le
        \bm{\bar{\gamma}}(Q^E_r(x))
        \le
        C_2\frac{\bm{\gamma}(Q_r(x))}{r^\theta},
        \qquad
        \text{for all } (x,r)\in E\times(0,1].
    \end{equation}
    The weight \(\bar{\gamma}\) will be called the boundary weight associated with \(\gamma\).
\end{Def}
\begin{Remark}
In the usual measure-theoretic formulation of Ahlfors--David coregularity one prescribes a measure on \(E\) (see, e.g., \cite{Luk_e}). In principle, Definition~\ref{Def.reg} could be reformulated in this way by replacing the weighted measure \(\bm{\bar\gamma}\) with a prescribed measure \(\mu\) on \(E\). However, in that formulation traces and trace spaces would have to be understood modulo \(\mu\)-null sets. Since in this paper traces to \(E\) are defined \(\mathcal H^d\lfloor_E\)-almost everywhere, we restrict ourselves to measures generated by weights \(\bar\gamma\) with respect to \(\mathcal H^d\lfloor_E\). This keeps the boundary measure compatible with the underlying notion of trace.
\end{Remark}
\begin{Example}
    Let \(\gamma(x)=|x|^\alpha\), where \(\alpha>-d\), \(d<n\). Then \(\mathbb R^{d}\subset\mathbb R^n\) is Ahlfors--David codimension-\((n-d)\) regular with respect to \(\gamma\). The corresponding boundary weight \(\bar{\gamma}\) is given by the pointwise restriction of \(\gamma\) to the plane \(\mathbb R^d\).
\end{Example}
The coregularity condition excludes many singular weights. Indeed, if
\begin{equation}
    \lim_{r\to0}
    \frac{\bm{\gamma}(Q_r(x))}{r^\theta}
    =
    \infty,
\end{equation}
then \eqref{eq.reg_cond} cannot hold. For instance, if \(\gamma(x)=|x|^\alpha\), where \(\alpha\in(-n,-d)\), then the plane \(\mathbb R^d \subset \mathbb{R}^n\) is not Ahlfors--David codimension \((n-d)\)-regular with respect to $\gamma$. This motivates the following weakened notion, which allows controlled divergences at isolated points. For technical reasons, we impose a specific rate of divergence.
\begin{Def}
    Given \(p\in[1,\infty)\), \(\theta\in(0,\infty)\), and a weight \(\gamma\) on \(\mathbb R^n\), we say that a point \(x\in\mathbb R^n\) is a point of \(p\)-rapid singularity of degree \(\theta\) of \(\gamma\) if
    \begin{equation}
        \lim_{r\to0}
        \frac{\bm{\gamma}(Q_r(x))}{r^\theta}
        =
        \infty
    \end{equation}
    and, for \(p>1\),
    \begin{equation}
    \label{eq.hard_in_cond}
        \sup_{\rho\in(0,1)}
        \left(
        \int\limits_{\rho}^1
        \frac{\bm{\gamma}(Q_r(x))}{r^\theta}
        \frac{dr}{r}
        \right)^{\frac{1}{p}}
        \left(
        \int\limits_0^\rho
        \left(
        \frac{\bm{\gamma}(Q_r(x))}{r^\theta}
        \right)^{-\frac{1}{p-1}}
        \frac{dr}{r}
        \right)^{\frac{p-1}{p}}
        <\infty,
    \end{equation}
    while, for \(p=1\),
    \begin{equation}
    \label{eq.hard_in_condp1}
      \sup_{\rho\in(0,1)}
      \left(
      \int\limits_\rho^1
      \frac{\bm{\gamma}(Q_r(x))}{r^\theta}
      \frac{dr}{r}
      \right)
      \left(
      \frac{\bm{\gamma}(Q_\rho(x))}{\rho^\theta}
      \right)^{-1}
      <+\infty.
    \end{equation}
    The set of points of \(p\)-rapid singularity of degree \(\theta\) of the weight \(\gamma\) is denoted by \(RS_{p,\theta}({\gamma})\).
\end{Def}
\begin{Remark}
    Condition \eqref{eq.hard_in_cond} guarantees the applicability of a suitable weighted Hardy inequality in the direct trace theorem; see Section~4. In practice, one may replace \eqref{eq.hard_in_cond} with any other sufficient condition that ensures the same Hardy estimate (see, e.g., \cite[Chapter~6]{kuf} and the references therein).
    A simple sufficient condition implying \eqref{eq.hard_in_cond} or \eqref{eq.hard_in_condp1} is
    \begin{equation}
        \frac{\bm{\gamma}(Q_r(x))}{r^\theta}
        \approx
        r^\lambda,
        \qquad
        \lambda<0.
    \end{equation}
   For example, for all \(p\in[1,\infty)\), the origin is a point of \(p\)-rapid singularity of degree \(\theta\) for the power weights \(\gamma(x)=|x|^\alpha\) whenever \(\alpha<\theta-n\).
\end{Remark}
\begin{Def}
    Given \(d\in(0,n)\), \(p\in[1,\infty)\), \(\theta\in(0,\infty)\), and a weight \(\gamma\) on \(\mathbb R^n\), we say that an Ahlfors--David \(d\)-regular set \(E\) is Ahlfors--David codimension-\(\theta\) almost regular with respect to \(\gamma\) if the following conditions hold:
    \begin{enumerate}
        \item \(S:=RS_{p,\theta}(\gamma)\cap E\) is finite, possibly empty;
        \item there exists a weight \(\bar{\gamma}\) on \(E\), called the boundary weight, and positive constants \(C_1,C_2>0\) such that
    \begin{equation}
    \label{eq.alm_reg}
        C_1\frac{\bm{\gamma}(Q_r(x))}{r^\theta}
        \le
        \bm{\bar{\gamma}}(Q^E_r(x))
        \le
        C_2\frac{\bm{\gamma}(Q_r(x))}{r^\theta}
    \end{equation}
    whenever \(x\in E\setminus S\) and
    $r\le
    \min\left\{
    \frac{1}{2}\operatorname{dist}(x,S),1
    \right\}$.
    \end{enumerate}
\end{Def}
In what follows, whenever the parameters \(p,\theta\), the weight \(\gamma\), and the set \(E\) are fixed, we write $S:=RS_{p,\theta}(\gamma)\cap E$ for the corresponding set of rapid singularities.
\par
First, we collect several basic consequences of almost regularity that will be used in Sections~4 and~5.
\begin{Lm}
\label{Lm.alm_reg_bas_pr}
Let \(d\in(0,n)\), \(p\in[1,\infty)\), \(\theta\in(0,\infty)\), and assume that an Ahlfors--David \(d\)-regular set \(E\) is Ahlfors--David codimension-\(\theta\) almost regular with respect to \(\gamma\). Let \(\bar{\gamma}\) be the corresponding boundary weight. Then:
\begin{enumerate}
    \item \(\bar{\gamma}\in L_1^{\operatorname{loc}}(E\setminus S)\);
    \item if \(\bm{\gamma}\) is locally doubling, then \(\bm{\bar{\gamma}}(Q_r^E(x_0))=\infty\) for every
    \(x_0\in S\) and every \(r>0\);
    \item if \(\bm{\gamma}\) is locally doubling, then the measure \(\bm{\bar{\gamma}}\) is locally doubling in
    \(E\setminus S\), i.e., there is a constant \(C>0\) such that, for each \(x\in E\setminus S\),
    \begin{equation}
        \bm{\bar{\gamma}}(Q_r^E(x))
        \le
        C\bm{\bar{\gamma}}(Q^{E}_{\frac{r}{2}}(x))
    \end{equation}
    whenever
    $0<r\le
    \min\left\{
    \frac12\operatorname{dist}(x,S),1
    \right\}$.
\end{enumerate}
In particular, the last two assertions hold whenever
\(\gamma\in A_{\infty}^{\operatorname{loc}}(\mathbb R^n)\).
\end{Lm}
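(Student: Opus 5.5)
We prove the three assertions one at a time, each by a short covering or limiting argument based on \eqref{eq.alm_reg}, the local integrability of \(\gamma\) on \(\mathbb R^n\), and, for (2) and (3), the local doubling property \eqref{eq.double_cond} of \(\bm{\gamma}\).

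For (1), let \(K\subset E\setminus S\) be compact. Since \(S\) is finite and closed, \(\delta:=\min\{\frac12\operatorname{dist}(K,S),1\}>0\); if \(S=\emptyset\) we take \(\delta=1\). The cubes \(Q^E_\delta(x)\), \(x\in K\), cover \(K\), so by compactness finitely many of them, centred at \(x_1,\dots,x_N\), suffice. The upper bound in \eqref{eq.alm_reg} is applicable at each centre because \(\delta\le\frac12\operatorname{dist}(x_i,S)\). Hence
\[
\bm{\bar\gamma}(K)\le\sum_{i=1}^N C_2\,\delta^{-\theta}\bm{\gamma}(Q_\delta(x_i))<\infty ,
\]
where the right-hand side is finite because \(\gamma\in L_1^{\operatorname{loc}}(\mathbb R^n)\).

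For (2), fix \(x_0\in S\) and \(r>0\). We may assume \(r\le\rho\), where \(\rho\le 1\) is small enough that \(x_0\) is the only point of \(S\) within distance \(4\rho\) of \(x_0\). Since \(x_0\) is a rapid singularity, \(\bm{\gamma}(Q_t(x_0))/t^\theta\to\infty\) as \(t\to0\). The key step is to transfer this divergence to points \(x\in E\setminus S\), because \eqref{eq.alm_reg} is only available there. Ahlfors--David regularity gives \(\mathcal H^d(Q^E_{t/2}(x_0))\ge C_1(t/2)^d>0\) while \(\mathcal H^d(\{x_0\})=0\), so for each small \(t\) there is a point \(x\in E\) with \(t/4\le|x-x_0|\le t/2\). (If no such point existed, \(Q^E_{t/2}(x_0)\) would be contained in \(Q^E_{t/4}(x_0)\), and iterating would give \(\mathcal H^d(Q^E_{t/2}(x_0))=\mathcal H^d(\{x_0\})=0\), a contradiction.) For such an \(x\) we have \(x\notin S\) and \(\operatorname{dist}(x,S)=|x-x_0|\ge t/4\), so \eqref{eq.alm_reg} applies with radius \(t/8\). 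Moreover \(Q_t(x_0)\subset Q_{2t}(x)\), so local doubling yields \(\bm{\gamma}(Q_{t/8}(x))\gtrsim\bm{\gamma}(Q_{2t}(x))\ge\bm{\gamma}(Q_t(x_0))\), with a constant independent of \(t\). Combining,
\[
\bm{\bar\gamma}(Q^E_{r}(x_0))\ge\bm{\bar\gamma}(Q^E_{t/8}(x))\ge C_1\,\frac{\bm{\gamma}(Q_{t/8}(x))}{(t/8)^\theta}\gtrsim\frac{\bm{\gamma}(Q_t(x_0))}{t^\theta}\longrightarrow\infty
\]
as \(t\to0\). We expect this step to be the main obstacle: one must locate points of \(E\) at a controlled distance from \(x_0\) and keep the doubling constants uniform in \(t\).

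For (3), let \(x\in E\setminus S\) and \(r\le\min\{\frac12\operatorname{dist}(x,S),1\}\). Then \(r/2\) is also admissible in \eqref{eq.alm_reg}, and so
\[
\bm{\bar\gamma}(Q^E_r(x))\le C_2\frac{\bm{\gamma}(Q_r(x))}{r^\theta}\le C_2C\,2^{-\theta}\frac{\bm{\gamma}(Q_{r/2}(x))}{(r/2)^\theta}\le\frac{C_2C}{C_1 2^\theta}\,\bm{\bar\gamma}(Q^E_{r/2}(x)),
\]
using doubling with \(R=1\). Finally, every \(A_\infty^{\operatorname{loc}}\)-weight is locally doubling by Remark~\ref{Rm.muck_weight}, so (2) and (3) hold for \(\gamma\in A_\infty^{\operatorname{loc}}(\mathbb R^n)\).
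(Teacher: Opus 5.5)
Your parts (1) and (3), and the closing remark on \(A_\infty^{\operatorname{loc}}\), are correct and coincide with the paper's arguments. The gap is in (2), at the step you yourself singled out. The claim that \emph{for each} small \(t\) there is \(x\in E\) with \(t/4\le|x-x_0|\le t/2\) is false for general Ahlfors--David regular sets, and your parenthetical justification does not prove it. If the annulus at scale \(t\) is empty, you only get \(Q^E_{t/2}(x_0)\subset Q^E_{t/4}(x_0)\). Nothing forces the annulus at scale \(t/2\) to be empty as well, so there is nothing to iterate, and \(\mathcal H^d(Q^E_{t/2}(x_0))=\mathcal H^d(Q^E_{t/4}(x_0))\approx t^d\) contradicts nothing.

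Here is a concrete counterexample. Take the self-similar Cantor set generated by \(x\mapsto x/10\) and \(x\mapsto x/10+9/10\), placed on a coordinate line of \(\mathbb R^n\). It is Ahlfors--David \(d\)-regular with \(d=\log 2/\log 10\), contains \(x_0=0\), and misses every interval \((10^{-k-1},9\cdot 10^{-k-1})\). Hence for \(t=10^{-k}\) the annulus \(t/4\le|x|\le t/2\) contains no point of \(E\). Yet with \(\gamma(x)=|x|^\alpha\), \(\alpha\in(-n,-d)\), and \(\theta=n-d\), all hypotheses of the lemma hold and \(0\in S\). Comparing the measures of \(Q^E_s(x_0)\) and \(Q^E_{cs}(x_0)\) does give points in \(\{cs\le|x-x_0|\le s\}\), but only for ratios \(c<(C_1/C_2)^{1/d}\), not for \(c=\frac12\).

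The repair is easy, because your final limit only needs a sequence \(t\to0\). What your measure comparison genuinely shows is that \(x_0\) is not isolated in \(E\). So pick \(x_k\in E\setminus\{x_0\}\) with \(x_k\to x_0\) and set \(t_k:=2|x_k-x_0|\), so that \(|x_k-x_0|=t_k/2\). The rest of your chain then goes through verbatim:
the radius \(t_k/8\) is admissible because \(\operatorname{dist}(x_k,S)=t_k/2\) for large \(k\);
the inclusion \(Q_{t_k}(x_0)\subset Q_{2t_k}(x_k)\) holds;
the doubling constants are independent of \(k\);
and \(Q^E_{t_k/8}(x_k)\subset Q^E_r(x_0)\) once \(t_k\le 8r/5\).
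This gives \(\bm{\bar{\gamma}}(Q^E_r(x_0))\gtrsim \bm{\gamma}(Q_{t_k}(x_0))/t_k^\theta\to\infty\).

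This is exactly the paper's proof, which applies the lower estimate at radius \(\rho_k=\frac12|x_k-x_0|\). Alternatively, you can keep ``each small \(t\)'' by using an annulus ratio \(c<(C_1/C_2)^{1/d}\) instead of \(\frac14\), and adjusting the radii and doubling constants accordingly.
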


\begin{proof}
The first assertion follows directly from the upper estimate in the definition of almost regularity. Indeed, every compact set \(K\subset E\setminus S\) has positive distance from \(S\), and hence can be covered by finitely many cubes on which the upper estimate applies. Since \(\bm{\gamma}\) is finite on compact subsets of \(\mathbb R^n\), this gives \(\bm{\bar{\gamma}}(K)<\infty\).
\par
Assume now that \(\bm{\gamma}\) is locally doubling and let \(x_0\in S\). Fix \(r>0\). By the Ahlfors--David regularity of \(E\), the point \(x_0\) is not isolated in \(E\). Since \(S\) is finite, for all sufficiently large \(k\in\mathbb N\) one can find a point \(x_k\in E\setminus S\) such that \(x_k\to x_0\) and
$\operatorname{dist}(x_k,S)=|x_k-x_0|$.
Let $\rho_k:=\frac12|x_k-x_0|$. Then, for all sufficiently large \(k\), we have $Q^E_{\rho_k}(x_k)\subset Q^E_r(x_0)$. Applying the lower estimate in the definition of almost regularity to the cubes
\(Q_{\rho_k}(x_k)\) and using the local doubling property of \(\bm{\gamma}\), we obtain
\begin{equation}
    \bm{\bar{\gamma}}(Q_r^E(x_0))
    \gtrsim
    \frac{\bm{\gamma}(Q_{\rho_k}(x_k))}{\rho_k^\theta}
    \gtrsim
    \frac{\bm{\gamma}(Q_{\rho_k}(x_0))}{\rho_k^\theta}
\end{equation}
for all sufficiently large \(k\). Since \(x_0\in S\), the right-hand side tends to infinity as \(k\to\infty\). Therefore $\bm{\bar{\gamma}}(Q_r^E(x_0))=\infty$.
\par
It remains to prove the local doubling property of \(\bm{\bar{\gamma}}\). Let \(x\in E\setminus S\) and let $0<r\le
\min\left\{
\frac12\operatorname{dist}(x,S),1
\right\}$.
Then the almost regularity estimates apply both to \(Q_{\frac{r}{2}}^E(x)\) and to \(Q_r^E(x)\). Therefore, using also the doubling property of \(\bm{\gamma}\), we get
\begin{equation}
    \bm{\bar{\gamma}}(Q_r^E(x))
    \lesssim
    \frac{\bm{\gamma}(Q_r(x))}{r^\theta}
    \lesssim
    \frac{\bm{\gamma}(Q_{\frac{r}{2}}(x))}{(\frac{r}{2})^\theta}
    \lesssim
    \bm{\bar{\gamma}}(Q_{\frac{r}{2}}^E(x)).
\end{equation}
This proves that \(\bm{\bar{\gamma}}\) is locally doubling in \(E\setminus S\).
\end{proof}
\begin{Example}
    Let \(d<n\) and let \(\gamma(x)=|x|^\alpha\), where \(\alpha>-n\), \(\alpha\ne -d\). Then \(\mathbb R^d\subset\mathbb R^n\) is Ahlfors--David codimension-\((n-d)\) almost regular.
The corresponding boundary weight \(\bar{\gamma}\) is given by the pointwise restriction of \(\gamma\) to the plane.
Moreover, for all \(p\in[1,\infty)\),
\begin{equation}
RS_{p,n-d}(\gamma)
=
\begin{cases}
\emptyset, & \text{if } \alpha>-d,\\[2mm]
\{0\}, & \text{if } \alpha\in(-n,-d).
\end{cases}
\end{equation}
\end{Example}
\subsection{Weighted Besov spaces}
We now recall the definition of the weighted Besov spaces used throughout the paper. Since in the sequel we work with the smoothness parameter \(s=\frac{\theta}{p}\), we assume throughout the trace results that \(0<\theta<p\). Thus \(s\in(0,1)\), and first-order differences are sufficient. Given \(f\in L_1^{\operatorname{loc}}(\mathbb{R}^n)\) and \(t>0\), we set
\begin{equation}
    \overline{\Delta}_t f(x)
    :=
    \frac{1}{t^n}
    \int\limits_{tI^n}|f(x+h)-f(x)|\,dh,
\end{equation}
where \(I^n=[-1,1]^n\).
\begin{Def}
    Let \(p,q\in[1,\infty]\), \(s\in(0,1)\), and let \(\gamma\) be a weight on \(\mathbb{R}^n\). The weighted Besov space \(B^s_{p,q}(\mathbb{R}^n,\gamma)\) is the collection of all equivalence classes of functions
    \(f\in L_1^{\operatorname{loc}}(\mathbb{R}^n)\cap L_p(\mathbb{R}^n,\gamma)\)
    such that
    \begin{equation}
        \|f\|_{b^s_{p,q}(\mathbb{R}^n,\gamma)}
        :=
        \left(
        \int_0^1
        \left(
        t^{-s}\|\overline{\Delta}_t f\|_{L_p(\mathbb{R}^n,\gamma)}
        \right)^q
        \frac{dt}{t}
        \right)^{\frac{1}{q}}
        <+\infty
    \end{equation}
    with the usual modification when \(q=\infty\). We equip this space with the norm
    \begin{equation}
        \|f\|_{B^s_{p,q}(\mathbb{R}^n,\gamma)}
        :=
        \|f\|_{L_p(\mathbb{R}^n,\gamma)}
        +
        \|f\|_{b^s_{p,q}(\mathbb{R}^n,\gamma)}, \qquad f\in B^s_{p, q}(\mathbb{R}^n, \gamma).
    \end{equation}
\end{Def}
\begin{Remark}
    If $\bm{\gamma}$ is locally doubling, then the choice of the upper limit \(1\) in the definition of
    \(\|f\|_{b^s_{p,q}(\mathbb R^n,\gamma)}\) is not essential on finite
    scales. More precisely, one may replace the integral over \((0,1)\) by
    the integral over \((0,R)\), where \(R>0\) is fixed, and obtain an
    equivalent norm, with constants depending on \(R\). 
\end{Remark}
\begin{Remark}
\label{Rm.equiv_seminorm}
   For the estimates below, it is convenient to replace the integral in \(t\) by a dyadic sum; see, e.g., \cite[Lemma~2.1]{GKZ}. Namely,
    \begin{equation}
        \label{eq.equiv_besov_norm}
        \|f\|_{b^s_{p,q}(\mathbb{R}^n,\gamma)}
        \approx
        \left(
        \sum_{k\in\mathbb{N}_0}
        \left(
        2^{ks}\|\overline{\Delta}_{2^{-k}}f\|_{L_p(\mathbb{R}^n,\gamma)}
        \right)^q
        \right)^{\frac{1}{q}},
    \end{equation}
    with the standard modification when \(q=\infty\). This equivalence follows from the pointwise inequality on each dyadic interval:
    \begin{equation}
        2^{(k+1)s}\overline{\Delta}_{2^{-k-1}}f(x)
        \le
        2^n t^{-s}\overline{\Delta}_t f(x)
        \le
        2^{2n+s}2^{ks}\overline{\Delta}_{2^{-k}}f(x),
    \end{equation}
    for all \(x\in\mathbb{R}^n\) and all \(t\in[2^{-k-1},2^{-k}]\).
\end{Remark}
The next elementary estimate will be used in the construction of the extension operator, where the extension is defined by means of a smooth partition of unity.
\begin{Lm}
\label{Lm.use_ineq}
Let \(p\ge 1\) and \(\gamma\in A_p^{\operatorname{loc}}(\mathbb{R}^n)\). Then:
    \begin{enumerate}
        \item for all \(f\in L_p(\mathbb{R}^n,\gamma)\) and all \(t\in(0,1]\), the following inequality holds:
        \begin{equation}
        \label{eq.av_lp_est_st}
            \|\overline{\Delta}_t f\|_{L_p(\mathbb{R}^n,\gamma)}
            \lesssim
            \|f\|_{L_p(\mathbb{R}^n,\gamma)}.
        \end{equation}
        \item for all \(f\in C^1(\mathbb{R}^n)\) such that \(|\nabla f|\in L_p(\mathbb{R}^n,\gamma)\), and all \(t\in(0,1]\), the following inequality holds:
        \begin{equation}
            \|\overline{\Delta}_t f\|_{L_p(\mathbb{R}^n,\gamma)}
            \lesssim
            t\||\nabla f|\|_{L_p(\mathbb{R}^n,\gamma)}.
        \end{equation}
    \end{enumerate}
\end{Lm}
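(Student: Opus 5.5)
For the first assertion, the plan is to reduce to an unweighted pointwise bound and then integrate against $\gamma$ using the local doubling of $\bm{\gamma}$. Write $f_Q:=\fint_Q f$ for a cube $Q$. For $x\in\mathbb R^n$ and $t\in(0,1]$,
\begin{equation*}
\overline{\Delta}_t f(x)\le \frac{1}{t^n}\int\limits_{tI^n}|f(x+h)|\,dh+2^n|f(x)| = 2^n\bigl(|f|_{Q_t(x)}+|f(x)|\bigr).
\end{equation*}
The term $|f(x)|$ contributes $\|f\|_{L_p(\mathbb R^n,\gamma)}$ directly, so everything reduces to bounding $\int |f|_{Q_t(x)}^p\,d\bm{\gamma}(x)$.

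We will not use the maximal function, since only one scale $t$ is involved. Instead, we use the averaging inequality \eqref{eq.equiv_muck} on each cube $Q_t(x)$. Its side length is $2t\le 2$, and by Remark~\ref{Rm.muck_weight} the scale is immaterial. This gives
\begin{equation*}
|f|_{Q_t(x)}^p\lesssim \frac{1}{\bm{\gamma}(Q_t(x))}\int\limits_{Q_t(x)}|f(y)|^p\,d\bm{\gamma}(y).
\end{equation*}
We then integrate in $x$ against $\gamma$ and apply Fubini. For $y\in Q_t(x)$ we have $Q_t(x)\subset Q_{2t}(y)$ and $Q_t(y)\subset Q_{2t}(x)$, so local doubling \eqref{eq.double_cond} yields $\bm{\gamma}(Q_t(x))\approx\bm{\gamma}(Q_t(y))$ uniformly in $t\le1$. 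Hence
\begin{equation*}
\int\limits_{\mathbb R^n}|f|_{Q_t(x)}^p\,d\bm{\gamma}(x)\lesssim\int\limits_{\mathbb R^n}|f(y)|^p\frac{\bm{\gamma}(Q_t(y))}{\bm{\gamma}(Q_t(y))}\,d\bm{\gamma}(y)=\|f\|^p_{L_p(\mathbb R^n,\gamma)}.
\end{equation*}
This proves the first assertion. The only delicate point is keeping the comparison of $\bm{\gamma}(Q_t(x))$ and $\bm{\gamma}(Q_t(y))$ uniform, which the restriction $t\le1$ provides.

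For the second assertion, we start from the $C^1$ representation
\begin{equation*}
|f(x+h)-f(x)|\le\int_0^1|\nabla f(x+\tau h)|\,|h|\,d\tau\le C t\int_0^1|\nabla f(x+\tau h)|\,d\tau
\end{equation*}
for $h\in tI^n$. Averaging over $h$ and substituting $u=\tau h$ gives
\begin{equation*}
\overline{\Delta}_tf(x)\lesssim t\int_0^1\fint\limits_{Q_{\tau t}(x)}|\nabla f(z)|\,dz\,d\tau.
\end{equation*}
Here a uniform-in-$\tau$ bound is needed. For each fixed $\tau\in(0,1]$, the first-part argument with $t$ replaced by $\tau t\le1$ gives
\begin{equation*}
\bigl\|\,|\nabla f|_{Q_{\tau t}(\cdot)}\bigr\|_{L_p(\mathbb R^n,\gamma)}\lesssim\|\nabla f\|_{L_p(\mathbb R^n,\gamma)},
\end{equation*}
with a constant independent of $\tau$, because both the $A_p^{\operatorname{loc}}$ constant and the doubling constant are uniform over scales at most $1$. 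Minkowski's integral inequality in $\tau$ then yields
\begin{equation*}
\|\overline{\Delta}_tf\|_{L_p(\mathbb R^n,\gamma)}\lesssim t\,\|\,|\nabla f|\,\|_{L_p(\mathbb R^n,\gamma)}.
\end{equation*}

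The main obstacle is exactly this uniformity. The change of variables degenerates as $\tau\to0$, so it is essential to use the averaging form \eqref{eq.equiv_muck} rather than any estimate whose constants grow as the scale shrinks. Measurability of the $\tau$-integrand is routine by continuity.
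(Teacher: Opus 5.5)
Your proof is correct and follows the paper's argument essentially step by step. In part (1) you use the same pointwise bound, the averaging inequality \eqref{eq.equiv_muck}, Fubini and local doubling; in part (2) you use the same segment representation and the change of variables $u=\tau h$, differing only in that you pass the $L_p(\mathbb R^n,\gamma)$-norm through the $\tau$-integral by Minkowski's integral inequality (reusing part (1) at scale $\tau t$), whereas the paper applies Jensen's inequality in $\tau$ pointwise before the averaging inequality and Fubini — both yield constants uniform in $\tau$.
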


\begin{proof}
The first inequality follows from the estimate
\begin{equation}
    \overline{\Delta}_t f(x)
    \lesssim
    \fint\limits_{Q_t(x)}|f(y)|\,dy+|f(x)|
\end{equation}
and Fubini's theorem. Indeed, by \eqref{eq.equiv_muck},
\begin{equation}
\label{eq.av_lp_est}
\begin{split}
\left\|
\fint\limits_{Q_t(\cdot)}|f(y)|\,dy
\right\|_{L_p(\mathbb{R}^n,\gamma)}^p
&\lesssim
\int\limits_{\mathbb{R}^n}
\frac{1}{\bm{\gamma}(Q_t(x))}
\int\limits_{Q_t(x)}
|f(y)|^p\,d\bm{\gamma}(y)\,d\bm{\gamma}(x)
\\
&=
\int\limits_{\mathbb{R}^n}
|f(y)|^p
\int\limits_{Q_t(y)}
\frac{d\bm{\gamma}(x)}{\bm{\gamma}(Q_t(x))}
\,d\bm{\gamma}(y).
\end{split}
\end{equation}
Since \(\gamma\in A_p^{\operatorname{loc}}(\mathbb{R}^n)\), the measure \(\bm{\gamma}\) is locally doubling. Hence, for all \(y\in\mathbb{R}^n\) and all \(x\in Q_t(y)\),
\begin{equation}
    \bm{\gamma}(Q_t(x))
    \approx
    \bm{\gamma}(Q_t(y)).
\end{equation}
Together with \eqref{eq.av_lp_est}, this gives \eqref{eq.av_lp_est_st}.

To prove the second inequality, we use the following estimate; see \cite[p.~215, Corollary~6]{Bur_book}:
\begin{equation}
    |f(x+h)-f(x)|
    \le
    |h|
    \int\limits_0^1 |\nabla f(x+\tau h)|\,d\tau.
\end{equation}
Then Fubini's theorem and elementary integration yield
\begin{equation}
\begin{split}
    \overline{\Delta}_t f(x)
    &\le
    \frac{1}{t^n}
    \int\limits_{tI^n}
    |h|
    \int\limits_0^1|\nabla f(x+\tau h)|\,d\tau\,dh\lesssim
    t
    \int\limits_0^1
    \fint\limits_{Q_{\tau t}(x)}
    |\nabla f(y)|\,dy\,d\tau.
\end{split}
\end{equation}
Furthermore, by Jensen's inequality, and by the weighted averaging inequality \eqref{eq.equiv_muck}, we have
\begin{equation}
\begin{split}
   \left(
   \int\limits_0^1
   \fint\limits_{Q_{\tau t}(x)}
   |\nabla f(y)|\,dy\,d\tau
   \right)^p
   &\lesssim
   \int\limits_0^1
   \left(
   \fint\limits_{Q_{\tau t}(x)}
   |\nabla f(y)|\,dy
   \right)^p
   d\tau
   \\
   &\lesssim
   \int\limits_0^1
   \frac{1}{\bm{\gamma}(Q_{\tau t}(x))}
   \int\limits_{Q_{\tau t}(x)}
   |\nabla f(y)|^p\,d\bm{\gamma}(y)\,d\tau.
\end{split}
\end{equation}
Consequently, applying Fubini's theorem and using the local doubling property of \(\bm{\gamma}\) as above, we obtain
\begin{equation}
    \|\overline{\Delta}_t f\|_{L_p(\mathbb{R}^n,\gamma)}^p
    \lesssim
    t^p
    \int\limits_0^1
    \||\nabla f|\|_{L_p(\mathbb{R}^n,\gamma)}^p
    d\tau
    =
    t^p
    \||\nabla f|\|_{L_p(\mathbb{R}^n,\gamma)}^p.
\end{equation}
The proof is complete.
\end{proof}
\begin{Remark}
\label{Rm.dif_bes}
Let \(p,q\in[1,\infty)\) and \(s\in(0,1)\). Assume that
\(\gamma\in A_p^{\operatorname{loc}}(\mathbb{R}^n)\). Then, for every
\(f\in C^1(\mathbb{R}^n)\) such that \(f,|\nabla f|\in L_p(\mathbb{R}^n,\gamma)\) and every \(\delta\in(0,1]\), Lemma~\ref{Lm.use_ineq} yields
    \begin{equation}
    \begin{split}
        \|f\|_{b^s_{p,q}(\mathbb{R}^n,\gamma)}
        &\lesssim
        \left(
        \int_0^\delta
        \left(
        t^{1-s}\||\nabla f|\|_{L_p(\mathbb{R}^n,\gamma)}
        \right)^q
        \frac{dt}{t}
        \right)^{\frac{1}{q}}
        +
        \left(
        \int_\delta^1
        \left(
        t^{-s}\|f\|_{L_p(\mathbb{R}^n,\gamma)}
        \right)^q
        \frac{dt}{t}
        \right)^{\frac{1}{q}}
        \\
        &\approx
        \delta^{1-s}\||\nabla f|\|_{L_p(\mathbb{R}^n,\gamma)}
        +
        \delta^{-s}\|f\|_{L_p(\mathbb{R}^n,\gamma)}.
    \end{split}
    \end{equation}
\end{Remark}
\par
Finally, we record a pointwise estimate which connects pointwise difference with the Besov oscillation. This estimate will be the main tool in the proof of the generalized Lebesgue point property at rapid singularities.
\begin{Lm}(\cite[Lemma~2.3]{GKZ})
\label{Lm.poinw_dif}
    There exists a positive constant \(C\) such that, for every
    \(f\in L_1^{\operatorname{loc}}(\mathbb{R}^n)\), one can find a set \(N_f\) with
    \(\mathcal{L}^n(N_f)=0\) so that, for every pair of points
    \(x,y\in\mathbb{R}^n\setminus N_f\) with
    \(|x-y|\in[2^{-k-1},2^{-k})\), one has
    \begin{equation}
    \label{eq.point_dif}
        |f(x)-f(y)|
        \le
        C
        \sum_{j=k-2}^{\infty}
        \left(
        \inf_{c\in\mathbb{R}}
        \fint\limits_{Q_{2^{-j}}(x)}
        |f(w)-c|\,dw
        +
        \inf_{c\in\mathbb{R}}
        \fint\limits_{Q_{2^{-j}}(y)}
        |f(w)-c|\,dw
        \right).
    \end{equation}
\end{Lm}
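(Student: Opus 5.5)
The plan is to take \(N_f\) to be the complement of the set of Lebesgue points of \(f\), so that \(\mathcal L^n(N_f)=0\) by the Lebesgue differentiation theorem. Then \(f(x)=\lim_{j\to\infty}f_{Q_{2^{-j}}(x)}\) for every \(x\notin N_f\). I will write \(f(x)-f(y)\) as the sum of three pieces: a telescoping chain of cube averages centred at \(x\), a telescoping chain centred at \(y\), and one comparison between two large cubes at scale \(\approx 2^{-k}\). Each piece will be bounded by best-constant oscillations
\begin{equation*}
\omega(z,j):=\inf_{c\in\mathbb R}\fint\limits_{Q_{2^{-j}}(z)}|f(w)-c|\,dw .
\end{equation*}

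Two elementary facts do the work. First, for any cube \(Q\), \(\fint_Q|f-f_Q|\le 2\inf_c\fint_Q|f-c|\): take \(c\) almost optimal and use \(|f_Q-c|\le\fint_Q|f-c|\). Second, if \(Q'\subset Q\) and \(|Q|\le C|Q'|\), then
\begin{equation*}
|f_{Q'}-f_Q|\le\fint_{Q'}|f-f_Q|\le C\fint_Q|f-f_Q|\le 2C\inf_{c}\fint_Q|f-c|.
\end{equation*}
Applying the second fact with \(Q'=Q_{2^{-j-1}}(x)\subset Q=Q_{2^{-j}}(x)\) and \(C=2^n\) gives \(|f_{Q_{2^{-j-1}}(x)}-f_{Q_{2^{-j}}(x)}|\le 2^{n+1}\omega(x,j)\). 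Since \(x\) is a Lebesgue point, telescoping yields
\begin{equation*}
|f(x)-f_{Q_{2^{-k+1}}(x)}|\le 2^{n+1}\sum_{j\ge k-1}\omega(x,j),
\qquad
|f(y)-f_{Q_{2^{-k+2}}(y)}|\le 2^{n+1}\sum_{j\ge k-2}\omega(y,j).
\end{equation*}

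For the cross term I use \(|x-y|<2^{-k}\). This gives \(Q_{2^{-k+1}}(x)\subset Q_{2^{-k+1}+2^{-k}}(y)\subset Q_{2^{-k+2}}(y)\), and the volume ratio of the two cubes is \(2^n\). The second fact then gives \(|f_{Q_{2^{-k+1}}(x)}-f_{Q_{2^{-k+2}}(y)}|\le 2^{n+1}\omega(y,k-2)\), and this term is already contained in the \(y\)-sum. Adding the three estimates gives \eqref{eq.point_dif} with \(C=2^{n+2}\). The lower bound \(|x-y|\ge 2^{-k-1}\) is not needed; it only fixes the index \(k\).

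I do not expect a genuine obstacle. The only place needing care is the nesting \(Q_{2^{-k+1}}(x)\subset Q_{2^{-k+2}}(y)\), which must be checked in the \(\ell_\infty\)-metric with the paper's convention that \(Q_r\) has edge length \(2r\), while keeping the starting indices \(k-1\) and \(k-2\) consistent. If \(k\) is small, so that \(2^{-k+2}\) exceeds \(1\), the argument is unchanged because no scale restriction is used.
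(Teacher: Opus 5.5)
Your proof is correct, and it is essentially the standard argument behind \cite[Lemma~2.3]{GKZ}, which the paper cites instead of proving: telescope dyadic averages at Lebesgue points of $x$ and of $y$, then make one comparison of averages over comparable nested cubes at scale $\approx 2^{-k}$, bounding each difference by a best-constant oscillation. Taking $N_f$ to be the set of non-Lebesgue points also yields Remark~\ref{Rm.point_dif_leb}, which the paper relies on later.
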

\begin{Remark}
\label{Rm.point_dif_leb}
    In particular, \eqref{eq.point_dif} holds whenever \(x\) and \(y\) are Lebesgue points of \(f\), which follows directly from the proof.
\end{Remark}
\subsection{Special function space}
As mentioned in the introduction, the usual weighted Lebesgue space is no longer suitable as the trace-space in the presence of rapid singular points. The difficulty is localized near these points, where the boundary measure has infinite mass. We therefore introduce a special space of functions which allows finitely many singular points and is adapted to the trace-space.
\begin{Def}
    Let \(d\in(0,n)\), let \(E\) be an Ahlfors--David \(d\)-regular set, and let \(S\subset E\) be a finite set, possibly empty. We put
    \begin{equation}
        \rho_S
        :=
        \begin{cases}
        \displaystyle
        \min\left\{1,\frac12\min\limits_{\substack{x,y\in S\\ x\ne y}}|x-y|\right\},
        & \text{if } \#S\ge 2,\\[2mm]
        1,
        & \text{if } \#S\le 1.
        \end{cases}
    \end{equation}
    Let \(\gamma\) be a weight on \(E\) such that
    \(\gamma\in L_1^{\operatorname{loc}}(E\setminus S)\) and $\bm{\gamma}(Q^E_r(x_0))=\infty$ for every \(r>0\) and every \(x_0\in S\). Given \(p\in[1,\infty)\), we define
    \(\mathfrak{L}_p(E,\gamma,S)\) as the collection of all equivalence classes of measurable functions \(f:E\to\mathbb R\) such that
    \begin{enumerate}
        \item \(f\in L_p(E\setminus Q^E_{\rho_S}(S),\gamma)\);
        \item \(f\) has a generalized weighted Lebesgue point at every \(x_0\in S\), i.e., there exists \(a\in\mathbb R\) such that $f-a\in L_p(Q^E_{\rho_S}(x_0),\gamma)$. In this case the number \(a\) is uniquely determined, and we denote it by \(f(x_0)\).
    \end{enumerate}
    We equip this space with the norm
    \begin{equation}
        \|f\|_{\mathfrak{L}_p(E,\gamma,S)}
        :=
        \|f\|_{L_p(E\setminus Q^E_{\rho_S}(S),\gamma)}
        +
        \sum_{x_0\in S}
        \left(
        |f(x_0)|
        +
        \|f-f(x_0)\|_{L_p(Q^E_{\rho_S}(x_0),\gamma)}
        \right).
    \end{equation}
\end{Def}
\begin{Remark}
    If \(S=\emptyset\), then
    \(\mathfrak{L}_p(E,\gamma,\emptyset)=L_p(E,\gamma)\) with equality of norms.
\end{Remark}
\begin{Remark}
\label{Rm.ren_sp}
    The definition implies that
    \(\mathfrak{L}_p(E,\gamma,S)\subset L_p^{\operatorname{loc}}(E\setminus S,\gamma)\).
    Indeed, let \(K\subset E\setminus S\) be compact. On
    \(K\setminus Q^E_{\rho_S}(S)\), the claim follows from the global condition
    \(f\in L_p(E\setminus Q^E_{\rho_S}(S),\gamma)\). On
    \(K\cap Q^E_{\rho_S}(x_0)\), where \(x_0\in S\), we write
    \(f=(f-f(x_0))+f(x_0)\). The first term belongs to
    \(L_p(Q^E_{\rho_S}(x_0),\gamma)\), while the second one belongs to
    \(L_p(K\cap Q^E_{\rho_S}(x_0),\gamma)\), since
    \(\gamma\in L_1^{\operatorname{loc}}(E\setminus S)\).
\end{Remark}
We shall use the following elementary approximation lemma. Although it is standard, we include the proof for completeness.
\begin{Lm}
    \label{Lm.av_conv}
Let \(G\subset E\) be a measurable subset of an Ahlfors--David \(d\)-regular set \(E\), let \(r_0>0\), and let \(\gamma\in L_1^{\operatorname{loc}}(Q^E_{r_0}(G))\) be a weight which is locally doubling on \(G\) at scales not exceeding \(\frac{r_0}{2}\), i.e., for some constant $C>0$,
\begin{equation}
    \bm{\gamma}(Q_{2t}^E(x)) \le C\bm{\gamma}(Q^E_{t}(x)), \qquad \text{for all } (x, t) \in G \times \left(0, \frac{r_0}{2}\right]
\end{equation}
Given $t\le t_0:=\min\left\{\frac{r_0}{4},1\right\}$
and \(f\in L_p(Q^E_{r_0}(G),\gamma)\), set
\begin{equation}
    \delta_t^{E,p} f(x)
    :=
    \fint\limits_{Q^E_t(x)}
    |f(x)-f(y)|^p\,d\bm{\gamma}(y),
    \qquad x\in G.
\end{equation}
Then, for every \(f\in L_p(Q^E_{r_0}(G),\gamma)\),
\begin{equation}
    \|\delta_t^{E,p} f\|_{L_1(G,\gamma)}\to0
    \quad\text{as } t\to0.
\end{equation}
\end{Lm}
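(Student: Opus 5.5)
We argue by density, the weighted analogue of the classical fact that translates (or averages) converge in \(L_p\). Throughout, fix \(f\in L_p(Q^E_{r_0}(G),\gamma)\) and \(t\le t_0\). The cases \(x\in G\) with \(\bm{\gamma}(Q^E_t(x))=0\) contribute nothing, since \(\gamma>0\) a.e. and \(E\) is Ahlfors--David regular, so such \(x\) form a null set.

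\textbf{Step 1: uniform bound.} We first prove
\begin{equation}
\|\delta_t^{E,p} g\|_{L_1(G,\gamma)}\le C\|g\|^p_{L_p(Q^E_{r_0}(G),\gamma)},\qquad t\le t_0,
\end{equation}
for every \(g\in L_p(Q^E_{r_0}(G),\gamma)\), with \(C\) independent of \(t\). Using \(|g(x)-g(y)|^p\le 2^{p-1}(|g(x)|^p+|g(y)|^p)\), the first term integrates to \(\lesssim\int_G|g|^p\,d\bm{\gamma}\). For the second term we apply Fubini:
\begin{equation}
\int\limits_G\fint\limits_{Q^E_t(x)}|g(y)|^p\,d\bm{\gamma}(y)\,d\bm{\gamma}(x)
=\int\limits_{Q^E_{t}(G)}|g(y)|^p\int\limits_{G\cap Q_t(y)}\frac{d\bm{\gamma}(x)}{\bm{\gamma}(Q^E_t(x))}\,d\bm{\gamma}(y).
\end{equation}
For \(x\in G\cap Q_t(y)\) we have \(Q^E_t(y)\cap G\subset Q^E_{2t}(x)\). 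Doubling at \(x\in G\) with \(2t\le r_0/2\) then gives \(\bm{\gamma}(Q^E_t(y)\cap G)\le \bm{\gamma}(Q^E_{2t}(x))\le C\bm{\gamma}(Q^E_t(x))\). Hence the inner integral is at most \(C\), and the bound follows. This step is the main obstacle: \(y\) need not lie in \(G\), so doubling is available only at the points \(x\in G\), and the argument must be arranged so that it is only ever used there, as above.

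\textbf{Step 2: continuous functions.} Let \(g\) be continuous with compact support. Then \(\delta^{E,p}_t g(x)\le \omega_g(t)^p\), where \(\omega_g\) is the modulus of uniform continuity of \(g\). Since \(g\) has compact support, \(K:=G\cap Q_1(\operatorname{supp}g)\) is bounded, and \(\delta_t^{E,p}g=0\) on \(G\setminus K\) for \(t\le 1\). Moreover \(K\subset Q^E_{r_0}(G)\) is relatively compact there, so \(\bm{\gamma}(K)<\infty\) by local integrability of \(\gamma\) on \(Q^E_{r_0}(G)\). Consequently \(\|\delta_t^{E,p}g\|_{L_1(G,\gamma)}\le \omega_g(t)^p\bm{\gamma}(K)\to0\). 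If \(\bm{\gamma}\) on bounded subsets of \(Q^E_{r_0}(G)\) is not finite near its edge, we instead truncate \(g\) to a compact subset of \(Q^E_{r_0}(G)\); approximation still works because \(\bm{\gamma}\) is a Radon measure on the open-in-\(E\) set we use.

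\textbf{Step 3: density and conclusion.} Continuous functions with compact support (restricted to \(E\)) are dense in \(L_p(Q^E_{r_0}(G),\gamma)\), since \(\bm{\gamma}\) is locally finite Borel there. Given \(\varepsilon>0\), choose such a \(g\) with \(\|f-g\|_{L_p}<\varepsilon\). Since \(\delta^{E,p}_t\) is the \(p\)-th power of an \(L_p(\bm{\gamma}\text{-avg})\) seminorm, we have
\begin{equation}
\delta_t^{E,p}f\le 2^{p-1}\bigl(\delta^{E,p}_t(f-g)+\delta^{E,p}_t g\bigr).
\end{equation}
Step 1 bounds the first term by \(C\varepsilon^p\) uniformly in \(t\), and Step 2 sends the second term to zero. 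Therefore \(\limsup_{t\to0}\|\delta_t^{E,p}f\|_{L_1(G,\gamma)}\lesssim\varepsilon^p\), and since \(\varepsilon>0\) is arbitrary, \(\|\delta_t^{E,p}f\|_{L_1(G,\gamma)}\to0\) as \(t\to0\).
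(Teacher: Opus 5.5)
Your proof is correct and follows the paper's argument essentially step for step: approximation by compactly supported continuous functions, uniform convergence for those, and a $t$-uniform $L_1$ bound obtained by Fubini with the doubling inequality applied only at the point $x\in G$. The paper states this bound for the averaging operator $A_t^{E,p}$ rather than for $\delta_t^{E,p}$, which is the same estimate after your splitting $|h(x)-h(y)|^p\le 2^{p-1}(|h(x)|^p+|h(y)|^p)$. The hedge at the end of your Step 2 is unnecessary: $Q^E_{r_0}(G)$ is closed, so its bounded subsets have compact closure inside it and hence finite $\bm{\gamma}$-measure by local integrability.
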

\begin{proof}
    \emph{Step 1.}
    Let \(g\in C_0(Q^E_{r_0}(G))\). By uniform continuity,
    \(\delta_t^{E,p}g\to0\) uniformly on \(G\) as \(t\to0\). Since \(g\) has compact support and \(\gamma\) is locally integrable, this implies
    \begin{equation}
    \label{eq.av_conv1}
         \|\delta_t^{E,p}g\|_{L_1(G,\gamma)}\to0
         \quad\text{as }t\to0.
    \end{equation}
    \emph{Step 2.}
    For \(f\in L_p(Q^E_{r_0}(G),\gamma)\), set
    \begin{equation}
        A_t^{E,p}f(x)
        :=
        \fint\limits_{Q^E_t(x)}
        |f(y)|^p\,d\bm{\gamma}(y).
    \end{equation}
    We prove an \(L_1\)-estimate for \(A_t^{E,p}\). By Fubini's theorem,
    \begin{equation}
        \label{eq.av_conv2}
        \begin{split}
        \|A_t^{E,p} f\|_{L_1(G,\gamma)} =
        \int\limits_{Q^E_t(G)}
        |f(y)|^p
        \int\limits_{Q^E_t(y)\cap G}
        \frac{d\bm{\gamma}(x)}
        {\bm{\gamma}(Q^E_t(x))}
        \,d\bm{\gamma}(y).
        \end{split}
    \end{equation}
    If \(x\in Q^E_t(y)\cap G\), then \(Q^E_t(y)\subset Q^E_{2t}(x)\). Therefore, the local doubling condition gives
    \begin{equation}
        \label{eq.av_conv3}
        \bm{\gamma}(Q^E_t(y))
        \le
        \bm{\gamma}(Q^E_{2t}(x))
        \le
        C\bm{\gamma}(Q^E_t(x)).
    \end{equation}
    Consequently,
    \begin{equation}
        \label{eq.av_conv4}
        \|A_t^{E,p}f\|_{L_1(G,\gamma)}
        \le
        C\|f\|_{L_p(G',\gamma)}^p,
    \end{equation}
    where \(G':=Q^E_{t_0}(G)\).

    \emph{Step 3.}
    Let \(f\in L_p(Q^E_{r_0}(G),\gamma)\) and let \(\varepsilon>0\). Since \(C_0(G')\) is dense in \(L_p(G',\gamma)\), we can choose
    \(g\in C_0(\mathbb R^n)\) such that
    \begin{equation}
    \label{eq.av_conv5}
        \|f-g\|_{L_p(G',\gamma)}\le \varepsilon.
    \end{equation}
    Then, for \(0<t\le t_0\), \(x\in G\), and \(y\in Q^E_t(x)\), we have
    \begin{equation}
        |f(x)-f(y)|^p
        \lesssim
        |f(x)-g(x)|^p
        +
        |g(x)-g(y)|^p
        +
        |f(y)-g(y)|^p.
    \end{equation}
    Consequently,
    \begin{equation}
        \label{eq.av_conv6}
        \|\delta_t^{E,p}f\|_{L_1(G,\gamma)}
        \lesssim
        \|g-f\|_{L_p(G,\gamma)}^p
        +
        \|A_t^{E,p}(g-f)\|_{L_1(G,\gamma)}
        +
        \|\delta_t^{E,p}g\|_{L_1(G,\gamma)}.
    \end{equation}
    Applying \eqref{eq.av_conv4} and \eqref{eq.av_conv5}, we obtain
    \begin{equation}
    \label{eq.av_conv7}
        \|\delta_t^{E,p}f\|_{L_1(G,\gamma)}
        \lesssim
        \varepsilon^p
        +
        \|\delta_t^{E,p}g\|_{L_1(G,\gamma)}.
    \end{equation}
    Taking the limit superior as \(t\to0\) and using Step~1, we get
    \begin{equation}
        \limsup_{t\to0}
        \|\delta_t^{E,p}f\|_{L_1(G,\gamma)}
        \lesssim
        \varepsilon^p.
    \end{equation}
    Since \(\varepsilon>0\) is arbitrary, the claim follows.
\end{proof}
The next proposition records the approximation property of the spaces
\(\mathfrak{L}_p(E,\gamma,S)\), which will be crucial in the construction of the extension operator.
\begin{Prop}
    \label{Prop.cruc_prop}
    Let \(p\in[1,\infty)\), let \(\alpha\in(0,\frac14]\), and let \(S\subset E\) be finite. Assume that \(\gamma\) satisfies the assumptions in the definition of \(\mathfrak{L}_p(E,\gamma,S)\). Assume also that \(\gamma\) is locally doubling in \(E\setminus S\), in the sense that there exists a constant \(C>0\) such that
    \begin{equation}
        \bm{\gamma}(Q^E_{2r}(x))
        \le
        C\bm{\gamma}(Q^E_r(x))
    \end{equation}
    whenever \(x\in E\setminus S\) and $0<r\le \min\left\{\frac12\operatorname{dist}(x,S),1\right\}$.
    Then, for each \(f\in\mathfrak{L}_p(E,\bm\gamma,S)\),
    \begin{equation}
        \lim_{r\to0}
        \left\|\delta_{\alpha r}^{E,p}f\right\|_{L_1(E\setminus Q^E_r(S),\gamma)}
        =
        0.
    \end{equation}
\end{Prop}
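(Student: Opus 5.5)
We split $E\setminus Q^E_r(S)$ into the far region $G_0:=E\setminus Q^E_{\rho_S}(S)$ and the annular pieces $G_{x_0}(r):=Q^E_{\rho_S}(x_0)\setminus Q^E_r(S)$, $x_0\in S$, treating each piece with Lemma~\ref{Lm.av_conv}. The key point near a singular point is that $\delta^{E,p}_t$ only involves differences: $\delta_t^{E,p}f=\delta_t^{E,p}(f-c)$ for every constant $c$. Near $x_0$ we may therefore replace $f$ by $f-f(x_0)$, which lies in $L_p(Q^E_{\rho_S}(x_0),\gamma)$ by the definition of $\mathfrak{L}_p(E,\gamma,S)$. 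Far from $S$ we use $f\in L_p(E\setminus Q^E_{\rho_S}(S),\gamma)$ together with local $L_p$-integrability from Remark~\ref{Rm.ren_sp}.

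The main difficulty is that the geometry near $S$ depends on $r$: for $x\in G_{x_0}(r)$ the doubling hypothesis is available only at scales up to about $\frac12\operatorname{dist}(x,S)\ge r/2$. Lemma~\ref{Lm.av_conv} cannot be applied on the whole of $Q^E_{\rho_S}(x_0)\setminus\{x_0\}$ with one fixed $r_0$. We overcome this with an $\varepsilon$-argument. Fix $\varepsilon>0$ and choose $\eta\in(0,\rho_S)$ so small that
\begin{equation*}
\sum_{x_0\in S}\|f-f(x_0)\|^p_{L_p(Q^E_{4\eta}(x_0),\gamma)}<\varepsilon ,
\end{equation*}
which is possible by absolute continuity of the integral, since $f-f(x_0)\in L_p(Q^E_{\rho_S}(x_0),\gamma)$.

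\emph{Inner annuli.} For $r<\eta$ and $x\in Q^E_{\eta}(x_0)\setminus Q^E_r(S)$ we have $t:=\alpha r\le r/4\le\frac14\operatorname{dist}(x,S)$. So $Q^E_t(x)\subset Q^E_{2\eta}(x_0)$, and doubling holds at $x$ up to scale $2t$. We bound $\delta^{E,p}_t(f-f(x_0))(x)\lesssim |f(x)-f(x_0)|^p+A^{E,p}_t(f-f(x_0))(x)$ and rerun the Fubini computation of Step~2 of Lemma~\ref{Lm.av_conv}, with $G=Q^E_\eta(x_0)\setminus Q^E_r(S)$. The doubling estimate \eqref{eq.av_conv3} is valid here because every $x\in G$ satisfies $2t\le\frac12\operatorname{dist}(x,S)$. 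This gives
\begin{equation*}
\|\delta^{E,p}_{\alpha r}f\|_{L_1(Q^E_\eta(x_0)\setminus Q^E_r(S),\gamma)}\lesssim\|f-f(x_0)\|^p_{L_p(Q^E_{2\eta}(x_0),\gamma)}\lesssim\varepsilon,
\end{equation*}
uniformly in $r<\eta$. Here the constraint $\alpha\le\frac14$ is used.

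\emph{Outer region.} The set $G_\eta:=E\setminus Q^E_\eta(S)$ satisfies $\operatorname{dist}(G_\eta,S)\ge\eta$. We apply Lemma~\ref{Lm.av_conv} on each of its pieces with $r_0:=\eta/2$; the doubling hypothesis holds there at scales $\le r_0/2$.
\begin{itemize}
\item On $G_\eta\cap Q^E_{\rho_S}(x_0)$ we apply the lemma to $f-f(x_0)$. It lies in $L_p(Q^E_{r_0}(G_\eta\cap Q^E_{\rho_S}(x_0)),\gamma)$, because the relevant set stays away from $S$ and $\gamma$ is locally integrable there (Remark~\ref{Rm.ren_sp}, Lemma~\ref{Lm.alm_reg_bas_pr}).
\item On $G_0$ we apply the lemma to $f$ itself. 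The neighbourhood $Q^E_{r_0}(G_0)$ enters $Q^E_{\rho_S}(S)$, but only at distance $\ge\rho_S-r_0$ from $S$, where $f\in L^{\mathrm{loc}}_p$.
\end{itemize}
If $E$ is unbounded we must make sure $f\in L_p$ on the whole neighbourhood, not just locally. This holds because the added part lies within $Q^E_{\rho_S}(S)$, which is bounded. In each case Lemma~\ref{Lm.av_conv} gives $\|\delta^{E,p}_{\alpha r}f\|_{L_1(G_\eta,\gamma)}\to0$ as $r\to0$.

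Combining the two regions, $\limsup_{r\to0}\|\delta^{E,p}_{\alpha r}f\|_{L_1(E\setminus Q^E_r(S),\gamma)}\lesssim\varepsilon$, and letting $\varepsilon\to0$ proves the proposition. The main obstacle is the uniform-in-$r$ control on the inner annuli. It rests on two facts: subtracting the constant $f(x_0)$ does not change $\delta^{E,p}_t$, and the restriction $\alpha\le\frac14$ keeps the averaging cubes inside the region where the doubling hypothesis applies.
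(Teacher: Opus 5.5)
Your argument is correct and is essentially the paper's proof. It uses the same three ingredients: the uniform-in-$r$ Fubini bound for $A^{E,p}_{\alpha r}$ near each $x_0\in S$ (where $\alpha\le\frac14$ makes the doubling estimate available), the smallness of $\|f-f(x_0)\|_{L_p(Q^E_\rho(x_0),\gamma)}$ as $\rho\to0$, and Lemma~\ref{Lm.av_conv} on a region at fixed distance from $S$. The only difference is bookkeeping: you cut the domain at scale $\eta$, while the paper cuts the function, splitting $g=f-f(x_0)$ as $(g-g_{\rho,x_0})+g_{\rho,x_0}$ with $g_{\rho,x_0}=g\chi_{Q^E_{\rho_S}(x_0)\setminus Q^E_\rho(x_0)}$ and using that $\delta^{E,p}_{\alpha r}g_{\rho,x_0}$ vanishes near $x_0$ for small $r$.
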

\begin{proof}
Take an arbitrary \(f\in\mathfrak{L}_p(E,\gamma,S)\). It is enough to consider $ 0<r<\min\left\{\frac{\rho_S}{4},1\right\}$.
\par
\emph{Step 1.}
We first prove that
\begin{equation}
\label{eq.cr_pr1}
     \lim_{r\to0}
     \left\|\delta_{\alpha r}^{E,p}f\right\|_{L_1(E\setminus Q^E_{\frac{\rho_S}{2}}(S),\gamma)}
     =
     0.
\end{equation}
Indeed, by Remark~\ref{Rm.ren_sp}, we have $f\in L_p(E\setminus Q^E_{\frac{\rho_S}{2}}(S),\gamma)$. Moreover, the weight \(\gamma\) is locally doubling in a neighborhood of
\(E\setminus Q^E_{\frac{\rho_S}{2}}(S)\). Applying Lemma~\ref{Lm.av_conv}, we obtain \eqref{eq.cr_pr1}.
\par
\emph{Step 2.}
Fix \(x_0\in S\) and set $C_{\rho_S,r}(x_0):=Q^E_{\frac{\rho_S}{2}}(x_0)\setminus Q^E_r(x_0)$.
We claim that, for every \(h\in L_p(Q^E_{\rho_S}(x_0),\gamma)\),
\begin{equation}
    \label{eq.cr_prn1}
    \|A_{\alpha r}^{E,p}h\|_{L_1(C_{\rho_S,r}(x_0),\gamma)}
    \le
    C\|h\|_{L_p(Q^E_{\rho_S}(x_0),\gamma)}^p,
\end{equation}
where \(C\) is independent of \(r\) and \(h\). Indeed, by Fubini's theorem,
\begin{equation}
\label{eq.cr_pr2}
\begin{split}
    \|A_{\alpha r}^{E,p}h\|_{L_1(C_{\rho_S,r}(x_0),\gamma)}
    \le
    \int\limits_{Q^E_{\rho_S}(x_0)\setminus Q^E_{(1-\alpha)r}(x_0)}
    |h(y)|^p
    \int\limits_{Q^E_{\alpha r}(y)\cap C_{\rho_S,r}(x_0)}
    \frac{d\bm{\gamma}(x)}
    {\bm{\gamma}(Q^E_{\alpha r}(x))}
    \,d\bm{\gamma}(y).
\end{split}
\end{equation}
Let $y\in Q^E_{\rho_S}(x_0)\setminus Q^E_{(1-\alpha)r}(x_0)$
and $x\in Q^E_{\alpha r}(y)\cap C_{\rho_S,r}(x_0)$. Since \(\operatorname{dist}(x,S)\ge r\) and \(\alpha\le\frac14\), we have $2\alpha r\le \frac12\operatorname{dist}(x,S)$.
Thus the doubling condition in \(E\setminus S\) gives
\begin{equation}
\label{eq.cr_pr3}
    \bm{\gamma}(Q^E_{\alpha r}(y))
    \le
    \bm{\gamma}(Q^E_{2\alpha r}(x))
    \le
    C\bm{\gamma}(Q^E_{\alpha r}(x)).
\end{equation}
Combining \eqref{eq.cr_pr2} and \eqref{eq.cr_pr3}, we obtain \eqref{eq.cr_prn1}.
\par
\emph{Step 3.}
Fix \(x_0\in S\), and put $g:=f-f(x_0)$. Then \(g\in L_p(Q^E_{\rho_S}(x_0),\gamma)\), and $\delta_{\alpha r}^{E,p}f = \delta_{\alpha r}^{E,p}g$.
For \(\rho\in(0,\frac{\rho_S}{2})\), set
\begin{equation}
\label{eq.cr_pr6}
    g_{\rho,x_0}
    :=
    g\chi_{Q^E_{\rho_S}(x_0)\setminus Q^E_\rho(x_0)}.
\end{equation}
By the triangle inequality,
\begin{equation}
\label{eq.cr_pr7}
\begin{split}
    \|\delta_{\alpha r}^{E,p}g\|_{L_1(C_{\rho_S,r}(x_0),\gamma)}
    &\lesssim
    \|g-g_{\rho,x_0}\|_{L_p(C_{\rho_S,r}(x_0),\gamma)}^p+
    \|\delta_{\alpha r}^{E,p}g_{\rho,x_0}\|_{L_1(C_{\rho_S,r}(x_0),\gamma)}
    \\
    &\quad+
    \|A_{\alpha r}^{E,p}(g-g_{\rho,x_0})\|_{L_1(C_{\rho_S,r}(x_0),\gamma)}.
\end{split}
\end{equation}
By \eqref{eq.cr_prn1}, we have
\begin{equation}
\label{eq.cr_pr8}
    \|A_{\alpha r}^{E,p}(g-g_{\rho,x_0})\|_{L_1(C_{\rho_S,r}(x_0),\gamma)}
    \lesssim
    \|g-g_{\rho,x_0}\|_{L_p(Q^E_{\rho_S}(x_0),\gamma)}^p
    \le
    \|g\|_{L_p(Q^E_\rho(x_0),\gamma)}^p.
\end{equation}
Similarly,
\begin{equation}
    \|g-g_{\rho,x_0}\|_{L_p(C_{\rho_S,r}(x_0),\gamma)}^p
    \le
    \|g\|_{L_p(Q^E_\rho(x_0),\gamma)}^p.
\end{equation}
Since \(g\in L_p(Q^E_{\rho_S}(x_0),\gamma)\), the right-hand side tends to \(0\) as \(\rho\to0\).
\par
It remains to estimate $\|\delta_{\alpha r}^{E,p}g_{\rho,x_0}\|_{L_1(C_{\rho_S,r}(x_0),\gamma)}$. For fixed \(\rho>0\), the function \(g_{\rho,x_0}(x)\) vanishes in \(Q^E_\rho(x_0)\). Hence, for all sufficiently small \(r\), the integrand in
\(\delta_{\alpha r}^{E,p}g_{\rho,x_0}\) vanishes whenever
\(x\in Q^E_{\rho/2}(x_0)\). Therefore the norm over
\(C_{\rho_S,r}(x_0)\) can be restricted to the fixed set
\begin{equation}
    Q^E_{\rho_S}(x_0)\setminus Q^E_{\frac{\rho}{2}}(x_0)
    \subset E\setminus S.
\end{equation}
Applying Lemma~\ref{Lm.av_conv} on this fixed set gives
\begin{equation}
    \label{eq.cr_pr10}
    \lim_{r\to0}
    \|\delta_{\alpha r}^{E,p}g_{\rho,x_0}\|_{L_1(C_{\rho_S,r}(x_0),\gamma)}
    =
    0.
\end{equation}
Taking first \(\limsup_{r\to0}\) and then letting \(\rho\to0\) in \eqref{eq.cr_pr7}, we get
\begin{equation}
    \label{eq.cr_prn2}
    \lim_{r\to0}
    \|\delta_{\alpha r}^{E,p}f\|_{L_1(C_{\rho_S,r}(x_0),\gamma)}
    =
    0.
\end{equation}
\par
\emph{Step 4.}
For \(r<\frac{\rho_S}{4}\), the set \(E\setminus Q^E_r(S)\) is covered by $E\setminus Q^E_{\frac{\rho_S}{2}}(S)$ and the finitely many annuli $\{C_{\rho_S,r}(x_0)\}_{x_0\in S}$.
Hence \eqref{eq.cr_pr1} and \eqref{eq.cr_prn2}, together with the finiteness of \(S\), imply
\begin{equation}
    \lim_{r\to0}
    \left\|\delta_{\alpha r}^{E,p}f\right\|_{L_1(E\setminus Q^E_r(S),\gamma)}
    =
    0.
\end{equation}
The proof is complete.
\end{proof}

\section{Existence of traces}
The aim of this section is to establish sufficient conditions for the existence of traces to an Ahlfors--David regular set. Throughout this section, we fix the following data:
\begin{itemize}
    \item a number \(d\in(0,n)\) and an Ahlfors--David \(d\)-regular set \(E\subset\mathbb R^n\);
    \item an integrability parameter \(p\in[1,\infty)\);
    \item a codimension parameter \(\theta\in(0,p)\).
\end{itemize}
We first recall the definition of the trace used in this paper.
\begin{Def}
    Given \(f\in L_1^{\operatorname{loc}}(\mathbb R^n)\), we say that a measurable function \(\phi:E\to\mathbb R\) is a trace of \(f\) to \(E\) if
    \begin{equation}
        \label{eq.tr_defn}
        \lim_{r\to0}
        \fint\limits_{Q_r(x)}
        |f(y)-\phi(x)|\,dy
        =
        0
    \end{equation}
    for \(\mathcal H^d\lfloor_E\)-almost every \(x\in E\). In this case, the equivalence class of \(\phi\) is denoted by \(\operatorname{Tr}f\).
\end{Def}
In particular, if the trace of \(f\) exists, then
\begin{equation}
\label{eq.trace_op}
    \operatorname{Tr}f(x)
    =
    \lim_{r\to0}
    \fint\limits_{Q_r(x)}f(y)\,dy
\end{equation}
for \(\mathcal H^d\lfloor_E\)-almost every \(x\in E\).
\par
In the classical unweighted case, the existence of traces is controlled by the relation between the smoothness of the space and the codimension of the set. For instance, in the endpoint case \(q=1\), the trace of \(B^{\frac{n-d}{p}}_{p,1}(\mathbb R^n)\) to \(\mathbb R^d\) is well defined. Thus the quantity \(n-d\) plays the role of the critical codimension. In the weighted setting, the relevant codimension is encoded by the behaviour of the weighted measure \(\bm\gamma\) near \(E\). This leads to the following nondegeneracy condition.
\begin{Def}
Given a weight \(\gamma\) on \(\mathbb R^n\), for each \(x\in E\), we set
\begin{equation}
    \hat{\gamma}_{\theta}(x)
    :=
    \inf_{r\in(0,1)}
    \frac{\bm{\gamma}(Q_r(x))}{r^{d+\theta}}.
\end{equation}
We say that \(\gamma\) is \(\theta\)-nondegenerate on \(E\) if $\hat{\gamma}_{\theta}(x)>0$ for \(\mathcal H^d\lfloor_E\)-almost every \(x\in E\).
\end{Def}
Our goal is to establish Theorem~\ref{Th.tr_exist_stat}. To this end, we shall prove several auxiliary statements.
\begin{Prop}
\label{Prop.tr_finit_def}
    Let \(\gamma\in A_p^{\operatorname{loc}}(\mathbb{R}^n)\) be \(\theta\)-nondegenerate on \(E\). Then, for each
    \(f\in B^{\frac{\theta}{p}}_{p,1}(\mathbb{R}^n,\gamma)\), there exists a finite dyadic limit
    \begin{equation}
    \label{eq.prop_tr_finit_st}
        \phi(x)
        :=
        \lim_{k\to\infty}
        \fint\limits_{Q_{2^{-k}}(x)} f(y)\,dy
        \in\mathbb{R}
    \end{equation}
    for \(\mathcal{H}^d\lfloor_E\)-almost every \(x\in E\).
\end{Prop}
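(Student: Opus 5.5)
The plan is to show that the sequence of dyadic averages $f_{Q_{2^{-k}}(x)}$ is Cauchy for $\mathcal H^d\lfloor_E$-a.e. $x\in E$. We do this by proving that
\[
\sum_{k\ge 0}\bigl|f_{Q_{2^{-k-1}}(x)}-f_{Q_{2^{-k}}(x)}\bigr|<\infty
\]
outside an $\mathcal H^d$-null set. The elementary starting point is
\[
\bigl|f_{Q_{2^{-k-1}}(x)}-f_{Q_{2^{-k}}(x)}\bigr|\le 2^n\fint\limits_{Q_{2^{-k}}(x)}|f(z)-f_{Q_{2^{-k}}(x)}|\,dz\le 2^{n}\fint\limits_{Q_{2^{-k}}(x)}\fint\limits_{Q_{2^{-k}}(x)}|f(z)-f(w)|\,dz\,dw .
\]
The right-hand side is in turn bounded by $C\fint_{Q_{2^{-k}}(x)}\overline{\Delta}_{2^{-k+1}}f(w)\,dw$, since $z\in Q_{2^{-k}}(x)\subset w+2^{-k+1}I^n$ for every $w\in Q_{2^{-k}}(x)$. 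We therefore set $g_k:=\overline{\Delta}_{2^{-k+1}}f$ and $M_k(x):=\fint_{Q_{2^{-k}}(x)}g_k$, and it suffices to show that $\sum_k M_k(x)<\infty$ for $\mathcal H^d\lfloor_E$-a.e. $x$.

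Next we convert the unweighted averages into weighted ones. By the weighted averaging inequality \eqref{eq.equiv_muck},
\[
M_k(x)\lesssim\Bigl(\frac{1}{\bm\gamma(Q_{2^{-k}}(x))}\int_{Q_{2^{-k}}(x)}g_k^p\,d\bm\gamma\Bigr)^{1/p}.
\]
At points $x$ with $\hat\gamma_\theta(x)\ge\lambda>0$ this gives
\[
M_k(x)\lesssim\lambda^{-1/p}\,2^{k(d+\theta)/p}\Bigl(\int_{Q_{2^{-k}}(x)}g_k^p\,d\bm\gamma\Bigr)^{1/p}.
\]
Set $a_k:=2^{k\theta/p}\|g_k\|_{L_p(\mathbb R^n,\gamma)}$. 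By Remark~\ref{Rm.equiv_seminorm}, $\sum_k a_k\lesssim\|f\|_{B^{\theta/p}_{p,1}(\mathbb R^n,\gamma)}<\infty$ (the index shift $k\to k-1$ costs only a constant).

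The key step is a weak-type covering estimate for the level sets. Fix $\lambda>0$ and put $E_\lambda:=\{x\in E:\hat\gamma_\theta(x)\ge\lambda\}$. Choose $\epsilon_k:=a_k^{1/2}$, so that $\sum_k\epsilon_k<\infty$ and $\sum_k a_k^p/\epsilon_k^p<\infty$. Consider
\[
F_k:=\{x\in E_\lambda:M_k(x)>\lambda^{-1/p}\epsilon_k\}.
\]
For every $x\in F_k$ we have
\[
\int_{Q_{2^{-k}}(x)}g_k^p\,d\bm\gamma\gtrsim\epsilon_k^p\,2^{-k(d+\theta)}.
\]
Cover $F_k$ by cubes $Q_{2^{-k}}(x_i)$, $x_i\in F_k$, with bounded overlap (Besicovitch, or a $5r$-type selection at a fixed scale). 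Then
\[
\#\{i\}\,2^{-kd}\lesssim 2^{k\theta}\epsilon_k^{-p}\|g_k\|^p_{L_p(\mathbb R^n,\gamma)}=a_k^p\epsilon_k^{-p}=a_k^{p/2},
\]
which yields $\mathcal H^d_{2^{-k+1}}(F_k)\lesssim a_k^{p/2}$. The tail $\sum_{k\ge K}a_k^{p/2}$ is finite: for $p\ge 2$ this is clear, but for $p<2$ we must choose $\epsilon_k$ more carefully, e.g.\ $\epsilon_k:=a_k^{1/(p+1)}$ when $p$ is close to $1$, or any exponent $\beta$ with $p(1-\beta)\ge1$ and $\beta\le 1$. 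So we take $\epsilon_k:=a_k^{\beta}$ with $\beta=1-1/p$ if $p>1$. For $p=1$ there is no such room, and instead we use $\epsilon_k:=(\sum_{j\ge k}a_j)^{1/2}-(\sum_{j>k}a_j)^{1/2}$-type weights: the standard trick where both $\sum_k\epsilon_k$ and $\sum_k a_k/\epsilon_k$ converge, obtained by choosing $\epsilon_k\approx a_k/\sqrt{r_k}$ with tails $r_k=\sum_{j\ge k}a_j$.

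With this choice, points lying in infinitely many $F_k$ form a set $N_\lambda$ with $\mathcal H^d_\delta(N_\lambda)\le\sum_{k\ge K}\mathcal H^d_\delta(F_k)\to0$ for every $\delta$, hence $\mathcal H^d(N_\lambda)=0$. Outside $N_\lambda$ we have $M_k(x)\le\lambda^{-1/p}\epsilon_k$ for large $k$, so the series converges and the limit exists. Taking $\lambda=1/m$, $m\in\mathbb N$, and using the nondegeneracy hypothesis, this gives the claim on $E$ up to an $\mathcal H^d\lfloor_E$-null set. The main obstacle is the summability bookkeeping in the choice of $\epsilon_k$, which must work uniformly for all $p\ge1$; the Abel-type choice $\epsilon_k=a_k r_k^{-1/2}$ works for every $p$. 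Indeed, $\sum_k a_k r_k^{-1/2}\lesssim\sqrt{r_0}$, and $a_k^p/\epsilon_k^p=r_k^{p/2}$, which is summable provided the $a_k$ are first replaced by a slightly larger sequence with geometrically decaying tails. That replacement is allowed because $a_k\le\tilde a_k$ with $\sum_k\tilde a_k<\infty$ and $\tilde r_k$ decaying geometrically can always be arranged only if the $a_k$ do, so if necessary we instead argue directly with the sets $\{x:\sum_k M_k(x)>\dots\}$. I expect this step to require the most care.
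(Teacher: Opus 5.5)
Your reduction is the same as the paper's. The bound of $|f_{Q_{2^{-k-1}}(x)}-f_{Q_{2^{-k}}(x)}|$ by $\fint_{Q_{2^{-k}}(x)}\overline{\Delta}_{2^{-k+1}}f$, the use of the weighted averaging inequality \eqref{eq.equiv_muck}, and the restriction to $\{\hat\gamma_\theta\ge\lambda\}$ are all fine. The gap is in the per-scale level-set/Borel--Cantelli step, and no choice of $\epsilon_k$ can repair it. Your scheme needs both $\sum_k\epsilon_k<\infty$ and $\sum_k(a_k/\epsilon_k)^p<\infty$. By H\"older's inequality with exponents $\frac{p+1}{p}$ and $p+1$,
\[
\sum_k a_k^{\frac{p}{p+1}}=\sum_k\epsilon_k^{\frac{p}{p+1}}\Bigl(\frac{a_k}{\epsilon_k}\Bigr)^{\frac{p}{p+1}}\le\Bigl(\sum_k\epsilon_k\Bigr)^{\frac{p}{p+1}}\Bigl(\sum_k\Bigl(\frac{a_k}{\epsilon_k}\Bigr)^p\Bigr)^{\frac{1}{p+1}},
\]
so such $\epsilon_k$ exist only if $\sum_k a_k^{p/(p+1)}<\infty$. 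This does not follow from $\sum_k a_k<\infty$: take $a_k=k^{-1-1/p}$.

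Each of your concrete choices breaks one of the two series:
\begin{itemize}
\item $\epsilon_k=a_k^{1/2}$ and $\epsilon_k=a_k^{1-1/p}$ give $\sum_k\epsilon_k=\infty$ in general. This holds for every $p$, including $p\ge2$, where you only checked the other series.
\item The Abel choice $\epsilon_k=a_kr_k^{-1/2}$ leaves $\sum_k r_k^{p/2}$. This diverges for every $p$ when the tails decay slowly, e.g.\ $a_k=1/(k\log^2k)$, $r_k\approx1/\log k$.
\end{itemize}
Your remark about ``geometrically decaying tails'' is circular. The underlying problem is that a Chebyshev (weak-type) estimate at each scale separately discards the $\ell^1$ structure of $\{a_k\}$, and that structure is all the $q=1$ endpoint gives you.

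The missing idea is to estimate the whole series at once in a strong norm, which is what the paper does. Since $E$ is Ahlfors--David regular, $\mathcal H^d\lfloor_E$ is a locally finite measure, so no coverings are needed. On $K:=E_\lambda\cap Q_R(0)$, Minkowski's inequality gives $\|\sum_kM_k\|_{L_p(K,\mathcal H^d\lfloor_E)}\le\sum_k\|M_k\|_{L_p(K,\mathcal H^d\lfloor_E)}$. For each $k$, Fubini's theorem and the upper Ahlfors bound $\mathcal H^d(Q_{2^{-k}}(y)\cap E)\lesssim 2^{-kd}$, valid for all $y\in\mathbb R^n$, yield
\[
\int_K M_k(x)^p\,d\mathcal H^d\lfloor_E(x)\lesssim\lambda^{-1}2^{k(d+\theta)}\int_{\mathbb R^n}g_k(y)^p\,\mathcal H^d\bigl(Q_{2^{-k}}(y)\cap E\bigr)\,d\bm\gamma(y)\lesssim\lambda^{-1}a_k^p .
\]
Hence $\|\sum_kM_k\|_{L_p(K,\mathcal H^d\lfloor_E)}\lesssim\lambda^{-1/p}\sum_ka_k<\infty$, so $\sum_kM_k(x)<\infty$ for $\mathcal H^d\lfloor_E$-a.e.\ $x\in K$. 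Exhausting $E$ by $\lambda=1/m$ and $R\in\mathbb N$ finishes the proof. Your closing suggestion to argue directly with the sets $\{x:\sum_kM_k(x)>\dots\}$ points this way, but it is the step that actually has to be carried out. It requires this summed $L_p$ bound rather than per-scale level-set estimates.
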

\begin{proof}
    Take an arbitrary \(f\in B^{\frac{\theta}{p}}_{p,1}(\mathbb{R}^n,\gamma)\) and define, for \(x\in E\),
    \begin{equation}
        \label{eq.tr_ex1}
        \Phi(x)
        :=
        \sum_{k=0}^{\infty}
        \left|
        f_{Q_{2^{-k-1}}(x)}
        -
        f_{Q_{2^{-k}}(x)}
        \right|.
    \end{equation}
    We first prove that \(\Phi(x)<\infty\) for
    \(\mathcal{H}^d\lfloor_E\)-almost every \(x\in E\). This will imply that the sequence
    \(\{f_{Q_{2^{-k}}(x)}\}_{k=0}^{\infty}\) is Cauchy, and hence has a finite limit, for almost every \(x\in E\).
    \par
    For all \(k\in\mathbb{N}_0\), we have $Q_{2^{-k-1}}(x)\subset Q_{2^{-k+1}}(y)$
    whenever \(y\in Q_{2^{-k}}(x)\). Therefore,
\begin{equation}
\label{eq.tr_ex2}
\begin{split}
    \left|
    f_{Q_{2^{-k-1}}(x)}
    -
    f_{Q_{2^{-k}}(x)}
    \right|
    \le
    \fint\limits_{Q_{2^{-k}}(x)}
    \fint\limits_{Q_{2^{-k-1}}(x)}
    |f(w)-f(y)|\,dw\,dy
    \\\lesssim
    \fint\limits_{Q_{2^{-k}}(x)}
    \fint\limits_{Q_{2^{-k+1}}(y)}
    |f(w)-f(y)|\,dw\,dy\approx
    \fint\limits_{Q_{2^{-k}}(x)}
    \overline{\Delta}_{2^{-k+1}}f(y)\,dy.
\end{split}
\end{equation}
Consequently, by the weighted averaging inequality \eqref{eq.equiv_muck},
\begin{equation}
\label{eq.tr_ex3}
    \Phi(x)
    \lesssim
    \sum_{k=0}^{\infty}
    \left(
    \fint\limits_{Q_{2^{-k}}(x)}
    \left(\overline{\Delta}_{2^{-k+1}}f(y)\right)^p
    \,d\bm{\gamma}(y)
    \right)^{\frac{1}{p}}.
\end{equation}
\par
Let $Z_0
    :=
    \{x\in E:\hat{\gamma}_{\theta}(x)=0\}$.
By the \(\theta\)-nondegeneracy of \(\gamma\) on \(E\), we have $\mathcal{H}^d\lfloor_E(Z_0)=0$. For \(R,m\in\mathbb{N}\), set
\begin{equation}
    K_{R,m}
    :=
    Q_R^E(0)
    \cap
    \left\{
    x\in E:\widehat\gamma_\theta(x)\ge \frac1m
    \right\}.
\end{equation}
Then, for each \(x\in K_{R,m}\) and each \(k\in\mathbb{N}_0\),
\begin{equation}
    \label{eq.tr_ex4}
    \frac{\bm{\gamma}(Q_{2^{-k}}(x))}
    {2^{-k(d+\theta)}}
    \ge
    \frac1m.
\end{equation}
Integrating \eqref{eq.tr_ex3} with respect to
\(\mathcal{H}^d\lfloor_E\) over \(K_{R,m}\) and using \eqref{eq.tr_ex4}, we obtain
\begin{equation}
    \label{eq.tr_ex5}
    \begin{split}
    \|\Phi\|_{L_p(K_{R,m},\mathcal{H}^d\lfloor_E)}
    \lesssim
    \sum_{k=0}^{\infty}
    \left(
    m2^{k(d+\theta)}
    \int\limits_{K_{R,m}}
    \int\limits_{Q_{2^{-k}}(x)}
    \left(\overline{\Delta}_{2^{-k+1}}f(y)\right)^p
    d\bm{\gamma}(y)\,
    d\mathcal{H}^d\lfloor_E(x)
    \right)^{\frac1p}.
    \end{split}
\end{equation}
\par
For all \(k\in\mathbb{N}_0\) and all \(g\in L_p(\mathbb{R}^n,\gamma)\), Fubini's theorem and the Ahlfors--David \(d\)-regularity of \(E\) give
\begin{equation}
\label{eq.tr_ex6}
\begin{split}
    &\int\limits_{K_{R,m}}
    2^{k(d+\theta)}
    \int\limits_{Q_{2^{-k}}(x)}
    |g(y)|^p\,d\bm{\gamma}(y)\,
    d\mathcal{H}^d\lfloor_E(x)
    =\\
    \int\limits_{Q_{2^{-k}}(K_{R,m})}
    |g(y)|^p&
    \int\limits_{Q_{2^{-k}}(y)\cap K_{R,m}}
    2^{k(d+\theta)}
    d\mathcal{H}^d\lfloor_E(x)\,
    d\bm{\gamma}(y)
    \lesssim
    2^{k\theta}
    \|g\|_{L_p(\mathbb{R}^n,\bm\gamma)}^p.
\end{split}
\end{equation}
Combining \eqref{eq.tr_ex5} and \eqref{eq.tr_ex6}, we get
\begin{equation}
    \label{eq.tr_ex7}
    \begin{split}
    \|\Phi\|_{L_p(K_{R,m},\mathcal{H}^d\lfloor_E)}
    \lesssim
    m^{\frac1p}
    \sum_{k=0}^{\infty}
    2^{\frac{k\theta}{p}}
    \|\overline{\Delta}_{2^{-k+1}}f\|_{L_p(\mathbb{R}^n,\gamma)}
    \lesssim
    m^{\frac1p}
    \|f\|_{B^{\frac{\theta}{p}}_{p,1}(\mathbb{R}^n,\gamma)}.
    \end{split}
\end{equation}
\par
Let $E_{\Phi} := \{x\in E:\Phi(x)=\infty\}$. By \eqref{eq.tr_ex7}, $\mathcal{H}^d\lfloor_E(E_{\Phi}\cap K_{R,m})=0$ for all \(R,m\in\mathbb N\). Since
\begin{equation}
    E\setminus Z_0
    =
    \bigcup_{R=1}^{\infty}
    \bigcup_{m=1}^{\infty}
    K_{R,m},
\end{equation}
and \(\mathcal{H}^d\lfloor_E(Z_0)=0\), we conclude that $\mathcal{H}^d\lfloor_E(E_{\Phi})=0$.
Thus \(\Phi(x)<\infty\) for \(\mathcal{H}^d\lfloor_E\)-almost every \(x\in E\), and the dyadic averages $f_{Q_{2^{-k}}(x)}$ converge to a finite limit for almost every \(x\in E\). This proves the proposition.
\end{proof}

\begin{Prop}
    \label{Prop.tr_ext_step_2}
    Let \(\gamma\in A_p^{\operatorname{loc}}(\mathbb{R}^n)\) be \(\theta\)-nondegenerate on \(E\), and let
    \(f\in B^{\frac{\theta}{p}}_{p,1}(\mathbb{R}^n,\gamma)\). Let
    \begin{equation}
        \phi(x)
        :=
        \lim_{k\to\infty}
        \fint\limits_{Q_{2^{-k}}(x)} f(y)\,dy
    \end{equation}
    be the finite limit given by Proposition~\ref{Prop.tr_finit_def}. Then \(\phi\) defines the trace of \(f\) to \(E\) in the sense of \eqref{eq.tr_defn}.
\end{Prop}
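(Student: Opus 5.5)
We need to show that $\fint_{Q_r(x)}|f(y)-\phi(x)|\,dy\to0$ as $r\to0$ for $\mathcal H^d\lfloor_E$-a.e. $x\in E$. The argument reduces the continuous limit to the dyadic one and then controls the mean oscillation over dyadic cubes by the same quantity $\Phi$ from the proof of Proposition~\ref{Prop.tr_finit_def}.

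\emph{Reduction to dyadic radii.} For $r\in[2^{-k-1},2^{-k}]$ we have $Q_r(x)\subset Q_{2^{-k}}(x)$ and $|Q_{2^{-k}}(x)|\le 2^n|Q_r(x)|$. Hence
\begin{equation*}
\fint_{Q_r(x)}|f-\phi(x)|\,dy\le 2^n\fint_{Q_{2^{-k}}(x)}|f-\phi(x)|\,dy ,
\end{equation*}
so it suffices to treat $r=2^{-k}$. Next we split
\begin{equation*}
\fint_{Q_{2^{-k}}(x)}|f-\phi(x)|\,dy\le \fint_{Q_{2^{-k}}(x)}|f-f_{Q_{2^{-k}}(x)}|\,dy+|f_{Q_{2^{-k}}(x)}-\phi(x)|.
\end{equation*}
The second term tends to $0$ by Proposition~\ref{Prop.tr_finit_def}. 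Moreover, it is bounded by the tail $\sum_{j\ge k}|f_{Q_{2^{-j-1}}(x)}-f_{Q_{2^{-j}}(x)}|$ of the series defining $\Phi(x)$.

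\emph{Oscillation term.} We use the same estimate as in \eqref{eq.tr_ex2}:
\begin{equation*}
\fint_{Q_{2^{-k}}(x)}|f-f_{Q_{2^{-k}}(x)}|\,dy\le\fint_{Q_{2^{-k}}(x)}\fint_{Q_{2^{-k}}(x)}|f(w)-f(y)|\,dw\,dy\lesssim\fint_{Q_{2^{-k}}(x)}\overline{\Delta}_{2^{-k+1}}f(y)\,dy .
\end{equation*}
By \eqref{eq.equiv_muck}, this is bounded by
\begin{equation*}
a_k(x):=\left(\fint_{Q_{2^{-k}}(x)}(\overline{\Delta}_{2^{-k+1}}f)^p\,d\bm\gamma\right)^{1/p}.
\end{equation*}
This $a_k(x)$ is exactly the $k$-th summand of the majorant on the right of \eqref{eq.tr_ex3}. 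Define $\Psi(x):=\sum_k a_k(x)$. The computation \eqref{eq.tr_ex5}--\eqref{eq.tr_ex7} shows $\|\Psi\|_{L_p(K_{R,m},\mathcal H^d\lfloor_E)}\lesssim m^{1/p}\|f\|_{B^{\theta/p}_{p,1}}$. (Strictly, that computation bounds $\|\Phi\|$ through the Minkowski sum of $\|a_k\|_{L_p}$, which dominates $\|\Psi\|_{L_p}$.) Hence $\Psi<\infty$ for $\mathcal H^d\lfloor_E$-a.e. $x\in E\setminus Z_0$, and therefore a.e. on $E$. At every such $x$ the summands satisfy $a_k(x)\to0$, so the oscillation term vanishes as $k\to\infty$.

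\emph{Conclusion.} Take $x$ outside the union of the null set where $\Psi=\infty$, the set $Z_0$, and the exceptional set of Proposition~\ref{Prop.tr_finit_def}; this union is $\mathcal H^d\lfloor_E$-null. At such $x$ both terms above tend to $0$, which proves \eqref{eq.tr_defn}. The main point requiring care is recognizing that the bound for $\Phi$ in Proposition~\ref{Prop.tr_finit_def} was actually proved for the larger series $\Psi$ of local oscillations, not just for the differences of averages. Once that is observed, no new estimate is needed; only a one-line Minkowski justification has to be recorded.
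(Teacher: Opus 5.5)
Your proof is correct, but it handles the oscillation term by a different route from the paper's.

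The paper keeps a general radius $r$ and introduces the single function $g=\sum_{k}2^{k\theta/p}\overline{\Delta}_{2^{-k}}f\in L_p(\mathbb R^n,\gamma)$. From it, the paper derives the pointwise bound
\[
\fint_{Q_r(x)}|f(y)-f_{Q_{2^{-k(r)}}(x)}|\,dy\lesssim \hat\gamma_\theta(x)^{-1/p}\Bigl(\frac{1}{r^d}\int_{Q_r(x)}|g(y)|^p\,d\bm\gamma(y)\Bigr)^{1/p}.
\]
It then invokes the Hausdorff density theorem (Theorem~\ref{Th.leb_points_ref}) for $|g|^p\gamma\in L_1(\mathbb R^n)$ to get decay $\mathcal H^d\lfloor_E$-a.e.

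You instead observe that the Minkowski--Fubini computation \eqref{eq.tr_ex5}--\eqref{eq.tr_ex7} already bounds $\sum_k\|a_k\|_{L_p(K_{R,m},\mathcal H^d\lfloor_E)}$. Hence $\sum_k a_k(x)<\infty$ a.e., and the individual terms $a_k(x)$, which dominate the dyadic oscillations, must tend to zero. Your reduction to dyadic radii costs only a factor $2^n$, so it is harmless.

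Your route is more economical. It needs no new estimate and no fine-property theorem, so Theorem~\ref{Th.leb_points_ref} becomes unnecessary in Section~3. It also shows that Propositions~\ref{Prop.tr_finit_def} and~\ref{Prop.tr_ext_step_2} both follow from one summability statement.

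The paper's route gives, in exchange, an explicit pointwise estimate valid at every $x$. That estimate is in terms of $\hat\gamma_\theta(x)$ and the $d$-dimensional upper density of one fixed $L_1$ function, rather than relying on the exhaustion by the sets $K_{R,m}$. It also identifies the exceptional set through a standard density theorem.
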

\begin{proof}
    We need to prove that, for \(\mathcal{H}^d\lfloor_E\)-almost every \(x\in E\),
\begin{equation}
    \lim_{r\to0}
    \fint\limits_{Q_r(x)}
    |f(y)-\phi(x)|\,dy
    =
    0.
\end{equation}
Given \(r\in(0,\frac12]\), let \(k(r)\in\mathbb{N}\) be the unique integer such that $r\in[2^{-k(r)-1},2^{-k(r)})$. Then, for each \(x\in E\),
\begin{equation}
\label{eq.tr_ex8}
    \fint\limits_{Q_r(x)}
    |f(y)-\phi(x)|\,dy
    \le
    \fint\limits_{Q_r(x)}
    |f(y)-f_{Q_{2^{-k(r)}}(x)}|\,dy
    +
    |f_{Q_{2^{-k(r)}}(x)}-\phi(x)|.
\end{equation}
\par
By the definition of \(\phi\),
\begin{equation}
\label{eq.tr_ex9}
     |f_{Q_{2^{-k(r)}}(x)}-\phi(x)|
     \to0
     \quad\text{as } r\to0
\end{equation}
for \(\mathcal{H}^d\lfloor_E\)-almost every \(x\in E\). 
Set
\begin{equation}
    g(x)
    :=
    \sum_{k=0}^{\infty}
    2^{\frac{k\theta}{p}}
    \overline{\Delta}_{2^{-k}}f(x).
\end{equation}
By Remark~\ref{Rm.equiv_seminorm}, we have $g\in L_p(\mathbb{R}^n,\gamma)$.
Moreover,
\begin{equation}
\label{eq.tr_ex10}
\begin{split}
    \fint\limits_{Q_r(x)}
    |f(y)-f_{Q_{2^{-k(r)}}(x)}|\,dy
    \lesssim
    \fint\limits_{Q_r(x)}
    \overline{\Delta}_{2^{-k(r)+1}}f(y)\,dy
    \lesssim
    2^{-\frac{k(r)\theta}{p}}
    \fint\limits_{Q_r(x)}
    g(y)\,dy.
\end{split}
\end{equation}
Using the weighted averaging inequality \eqref{eq.equiv_muck}, we obtain
\begin{equation}
    \label{eq.tr_ex11}
    \begin{split}
    \fint\limits_{Q_r(x)}
    |f(y)-f_{Q_{2^{-k(r)}}(x)}|\,dy
    &\lesssim
    \left(
    \frac{2^{-k(r)\theta}}{\bm{\gamma}(Q_r(x))}
    \int\limits_{Q_r(x)}
    |g(y)|^p\,d\bm{\gamma}(y)
    \right)^{\frac1p}
    \\
    &\lesssim
    \left(
    \frac{1}{\hat{\gamma}_{\theta}(x)}
    \right)^{\frac1p}
    \left(
    \frac{1}{r^d}
    \int\limits_{Q_r(x)}
    |g(y)|^p\,d\bm{\gamma}(y)
    \right)^{\frac1p}.
    \end{split}
\end{equation}
Here we used that \(r\approx 2^{-k(r)}\) and the definition of
\(\hat\gamma_\theta(x)\). Since \(|g|^p\gamma\in L_1(\mathbb{R}^n)\), Theorem~\ref{Th.leb_points_ref} applied to the function \(|g|^p\gamma\) gives
\begin{equation}
    \frac{1}{r^d}
    \int\limits_{Q_r(x)}
    |g(y)|^p\,d\bm{\gamma}(y)
    \to0
    \quad\text{as }r\to0
\end{equation}
for \(\mathcal{H}^d\lfloor_E\)-almost every \(x\in E\). Since
\(\hat\gamma_\theta(x)>0\) for \(\mathcal{H}^d\lfloor_E\)-almost every \(x\in E\), the right-hand side of \eqref{eq.tr_ex11} tends to \(0\) for almost every \(x\in E\). Combining this with \eqref{eq.tr_ex8} and \eqref{eq.tr_ex9}, we obtain
\begin{equation}
    \lim_{r\to0}
    \fint\limits_{Q_r(x)}
    |f(y)-\phi(x)|\,dy
    =
    0
\end{equation}
for \(\mathcal{H}^d\lfloor_E\)-almost every \(x\in E\). Hence \(\phi\) defines the trace of \(f\) to \(E\).
\end{proof}
Now Theorem~\ref{Th.tr_exist_stat} follows immediately from
Propositions~\ref{Prop.tr_finit_def} and~\ref{Prop.tr_ext_step_2}.
\begin{Ca}
\label{Ca.coreg_tr}
    Let \(\gamma\in A^{\operatorname{loc}}_p(\mathbb R^n)\), and assume that \(E\) is Ahlfors--David codimension-\(\theta\) almost regular with respect to \(\gamma\). Then, for every
    \(f\in B^{\frac{\theta}{p}}_{p,1}(\mathbb R^n,\gamma)\), the trace of \(f\) to \(E\) exists in the sense of \eqref{eq.tr_defn}.
\end{Ca}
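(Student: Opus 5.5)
The plan is to reduce Corollary~\ref{Ca.coreg_tr} to Theorem~\ref{Th.tr_exist_stat}. That is, we show that almost regularity forces $\gamma$ to be $\theta$-nondegenerate at every point of $E\setminus S$. Since $S$ is finite and $d>0$, we have $\mathcal H^d\lfloor_E(S)=0$, so this gives nondegeneracy $\mathcal H^d\lfloor_E$-a.e. Propositions~\ref{Prop.tr_finit_def} and~\ref{Prop.tr_ext_step_2} then yield the trace.

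Fix $x\in E\setminus S$ and put $r_x:=\min\{\frac12\operatorname{dist}(x,S),1\}>0$. The estimate is split into small and large scales.

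\emph{Small scales, $r\le r_x$.} By the lower bound in \eqref{eq.alm_reg},
\[
\frac{\bm\gamma(Q_r(x))}{r^{d+\theta}}\ge \frac{1}{C_2}\,\frac{\bm{\bar\gamma}(Q^E_r(x))}{r^d}.
\]
This alone does not bound the quotient from below as $r\to0$. Instead, we use that $\bm{\bar\gamma}$ is absolutely continuous with respect to $\mathcal H^d\lfloor_E$ with density $\bar\gamma$. Moreover, $\bar\gamma\in L_1^{\operatorname{loc}}(E\setminus S)$ by Lemma~\ref{Lm.alm_reg_bas_pr}. Since $\mathcal H^d\lfloor_E$ is doubling on the Ahlfors--David regular set $E$, the Lebesgue differentiation theorem gives
\[
\frac{\bm{\bar\gamma}(Q^E_r(x))}{\mathcal H^d(Q^E_r(x))}\to\bar\gamma(x)>0
\]
for $\mathcal H^d\lfloor_E$-a.e.\ $x$. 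Combined with $\mathcal H^d(Q_r^E(x))\ge C_1 r^d$, this shows that the quotient stays bounded below as $r\to0$. Hence the infimum over $r\in(0,r_0(x)]$ is positive for some $r_0(x)>0$.

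\emph{Large scales, $r\in[r_0(x),1)$.} Here $\bm\gamma(Q_r(x))\ge\bm\gamma(Q_{r_0(x)}(x))>0$, because $\gamma>0$ a.e.\ on $\mathbb R^n$. Also $r^{d+\theta}\le 1$, so the quotient is bounded below by a positive constant. Together the two ranges give $\hat\gamma_\theta(x)>0$ for a.e.\ $x\in E$.

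The main obstacle is the small-scale step. The constant in the regularity condition is uniform, but the lower bound $\bar\gamma(x)>0$ is only pointwise a.e., so the argument must go through the differentiation theorem for the doubling measure $\mathcal H^d\lfloor_E$ on $E$. Local integrability of $\bar\gamma$ away from $S$ is needed for that theorem. It suffices to apply it on compact subsets of $E\setminus S$, which exhaust $E\setminus S$ countably.
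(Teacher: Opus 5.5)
Your proposal is correct and follows the paper's proof. Like the paper, you reduce to Theorem~\ref{Th.tr_exist_stat} by combining the right-hand inequality in \eqref{eq.alm_reg}, the local integrability $\bar\gamma\in L_1^{\operatorname{loc}}(E\setminus S)$ from Lemma~\ref{Lm.alm_reg_bas_pr}, the Lebesgue differentiation theorem on the Ahlfors--David regular space $E$, and $\mathcal H^d\lfloor_E(S)=0$; your large-scale step (using $\bm\gamma(Q_{r_0(x)}(x))>0$ and $r^{d+\theta}\le 1$) also spells out the passage from $\liminf_{r\to0}$ to $\inf_{r\in(0,1)}$ in the definition of $\hat\gamma_\theta(x)$, which the paper leaves implicit.
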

\begin{proof}
Since \(S:=RS_{p,\theta}(\gamma)\cap E\) is finite, it has zero
\(\mathcal H^d\lfloor_E\)-measure. By the almost regularity condition
\eqref{eq.alm_reg}, for every \(x\in E\setminus S\)
and all sufficiently small \(r>0\), we have
\begin{equation}
    \frac{\bm{\gamma}(Q_r(x))}{r^{d+\theta}}
    \gtrsim
    \frac{\bm{\bar{\gamma}}(Q^E_r(x))}{r^d}.
\end{equation}
By Lemma~\ref{Lm.alm_reg_bas_pr}, we have $\bar{\gamma}\in
    L_1^{\operatorname{loc}}(E\setminus S)$. Since \(\bar{\gamma}(x)>0\) for \(\mathcal H^d\lfloor_E\)-almost every \(x\in E\), the Lebesgue differentiation theorem on the Ahlfors--David regular measure space \(E\) gives
\begin{equation}
    \liminf_{r\to0}\frac{\bm{\bar{\gamma}}(Q^E_r(x))}{r^d}>0
\end{equation}
for \(\mathcal H^d\lfloor_E\)-almost every
\(x\in E\setminus S\). Hence
\begin{equation}
    \liminf_{r\to0}
    \frac{\bm{\gamma}(Q_r(x))}{r^{d+\theta}}
    >0
    \qquad
    \text{for }\mathcal H^d\lfloor_E\text{-a.e. }
    x\in E.
\end{equation}
Therefore, \(\hat{\gamma}_{\theta}(x)>0\) for
\(\mathcal H^d\lfloor_E\)-almost every \(x\in E\). Thus \(\gamma\) is
\(\theta\)-nondegenerate on \(E\), and the result follows from
Theorem~\ref{Th.tr_exist_stat}.
\end{proof}
\par
Under the assumptions of Theorem~\ref{Th.tr_exist_stat}, each
\(f\in B^{\frac{\theta}{p}}_{p,1}(\mathbb R^n,\gamma)\) admits a precise representative \(\widetilde f\) of \(f\) such that
\begin{equation}
    \operatorname{Tr}f(x)
    =
    \widetilde f(x),
    \qquad
    \text{for }\mathcal H^d\lfloor_E\text{-a.e. }x\in E.
\end{equation}
In particular, \(\mathcal H^d\lfloor_E\)-almost every \(x\in E\) is a Lebesgue point of \(\widetilde f\) in the sense of \eqref{eq.tr_defn}. Until the end of the paper, we identify every
\(f\in B^{\frac{\theta}{p}}_{p,1}(\mathbb R^n,\gamma)\) with its precise representative \(\widetilde f\).
\section{The direct trace theorem}
The aim of this section is to prove the direct part of the trace theorem. More precisely, we prove that $\operatorname{Tr}f \in \mathfrak{L}_p\left(E,{\bar{\gamma}},RS_{p,\theta}(\gamma)\cap E)\right)$
for each \(f\in B^{\frac{\theta}{p}}_{p,1}(\mathbb R^n,\gamma)\), and establish the boundedness of the trace operator.
\par
Throughout this section, we fix the following data:
\begin{itemize}
    \item a number \(d\in(0,n)\) and an Ahlfors--David \(d\)-regular set \(E\subset\mathbb R^n\);
    \item an integrability parameter \(p\in[1,\infty)\);
    \item a codimension parameter \(\theta\in(0,p)\);
    \item a weight \(\gamma\in A_p^{\operatorname{loc}}(\mathbb R^n)\) such that \(E\) is Ahlfors--David codimension-\(\theta\) almost regular with respect to \(\gamma\).
\end{itemize}
We set $S:=RS_{p,\theta}(\gamma)\cap E$ and $\rho_0:=\rho_S$.
We divide the proof of the direct trace theorem into several parts, stated below as lemmas.
\begin{Lm}
\label{Lm.0dir}
    For each \(f\in B^{\frac{\theta}{p}}_{p,1}(\mathbb R^n,\gamma)\), one has $\operatorname{Tr}f \in L_p(E\setminus Q_{\rho_0}^E(S),{\bar{\gamma}})$.
    Moreover, there is a constant \(C>0\), depending only on the fixed data, such that
    \begin{equation}
    \label{eq.lm1_dir_st}
        \|\operatorname{Tr}f\|_{L_p(E\setminus Q_{\rho_0}^E(S),{\bar{\gamma}})}
        \le
        C
        \|f\|_{B^{\frac{\theta}{p}}_{p,1}(\mathbb R^n,\gamma)}.
    \end{equation}
\end{Lm}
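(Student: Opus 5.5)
We estimate $\operatorname{Tr}f(x)$ pointwise by telescoping from scale $r_0$ down to $0$. The limit term is controlled by the dyadic series already used in the proof of Proposition~\ref{Prop.tr_finit_def}. Then we integrate against $\bm{\bar\gamma}$, using the upper half of the almost regularity estimate \eqref{eq.alm_reg} to convert $\bar\gamma$-integrals into ambient $\bm\gamma$-integrals with the gain $r^{-\theta}$.

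Fix a small scale $r_0=2^{-k_0}$ with $r_0\le \rho_0/4$, chosen depending only on the fixed data. For $x\in G:=E\setminus Q^E_{\rho_0}(S)$ we have $\operatorname{dist}(x,S)>\rho_0$, so \eqref{eq.alm_reg} holds for all $r\le r_0$. By the trace definition and \eqref{eq.tr_ex2}, \eqref{eq.tr_ex3}, for a.e.\ $x\in G$,
\begin{equation*}
|\operatorname{Tr}f(x)|\le |f_{Q_{r_0}(x)}|+\sum_{k\ge k_0}\Bigl(\fint\limits_{Q_{2^{-k}}(x)}\bigl(\overline{\Delta}_{2^{-k+1}}f\bigr)^p\,d\bm\gamma\Bigr)^{\frac1p}.
\end{equation*}

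For the first term, \eqref{eq.equiv_muck} gives $|f_{Q_{r_0}(x)}|^p\lesssim \bm\gamma(Q_{r_0}(x))^{-1}\int_{Q_{r_0}(x)}|f|^p\,d\bm\gamma$. We take the $L_p(G,\bar\gamma)$-norm and apply Minkowski's inequality to the series. This reduces everything to the single estimate
\begin{equation*}
\int\limits_G \frac{1}{\bm\gamma(Q_r(x))}\int\limits_{Q_r(x)}|h(y)|^p\,d\bm\gamma(y)\,d\bm{\bar\gamma}(x)\lesssim r^{-\theta}\|h\|_{L_p(\mathbb R^n,\gamma)}^p,\qquad 0<r\le r_0 .
\end{equation*}
To prove it, apply Fubini. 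The inner integral becomes $\int_{Q_r(y)\cap G}\bm\gamma(Q_r(x))^{-1}\,d\bm{\bar\gamma}(x)$ for $y\in Q_r(G)$. Local doubling of $\bm\gamma$ gives $\bm\gamma(Q_r(x))\approx\bm\gamma(Q_{2r}(x'))$ for a fixed $x'\in Q_r(y)\cap G$. The set $Q_r(y)\cap G$ lies in $Q^E_{2r}(x')$, and the upper bound in \eqref{eq.alm_reg} at $x'$ (valid since $2r\le\rho_0/2<\frac12\operatorname{dist}(x',S)$) gives $\bm{\bar\gamma}(Q^E_{2r}(x'))\lesssim \bm\gamma(Q_{2r}(x'))(2r)^{-\theta}$. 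Hence the inner integral is $\lesssim r^{-\theta}$, which proves the displayed estimate.

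Applying this with $h=f$ and $r=r_0$ gives $\lesssim\|f\|_{L_p(\mathbb R^n,\gamma)}$. Applying it with $h=\overline{\Delta}_{2^{-k+1}}f$ and $r=2^{-k}$ gives
\begin{equation*}
\sum_{k\ge k_0}2^{\frac{k\theta}{p}}\|\overline{\Delta}_{2^{-k+1}}f\|_{L_p(\mathbb R^n,\gamma)}\lesssim \|f\|_{b^{\theta/p}_{p,1}(\mathbb R^n,\gamma)},
\end{equation*}
by Remark~\ref{Rm.equiv_seminorm}; the shift $2^{-k+1}$ costs only a constant factor. Together these yield \eqref{eq.lm1_dir_st}.

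The step needing most care is the Fubini/doubling argument. One must check that every point $x'$ used lies in $E\setminus S$ at distance more than $4r$ from $S$, so that the almost regularity upper bound is legitimate. Near $S$ no such bound is available, which is exactly why this lemma is restricted to $E\setminus Q^E_{\rho_0}(S)$.
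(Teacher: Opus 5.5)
Your proposal is correct and follows essentially the same route as the paper: you telescope the dyadic averages from a fixed scale below $\rho_0/4$ and apply Minkowski's inequality. You then use a Fubini estimate in which the inner $\bar\gamma$-integral is bounded by $r^{-\theta}$, via the upper almost-regularity bound at a nearby point of $E\setminus Q^E_{\rho_0}(S)$ together with local doubling of $\bm{\gamma}$. The only difference is cosmetic: the paper works on the slightly larger set $E\setminus Q^E_{2^{-k}}(S)$ with $2^{-k}<\rho_0$ and starts the telescoping at scale $2^{-k-2}$.
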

\begin{proof}
    Choose \(k\in\mathbb N_0\) such that $\rho_0\in(2^{-k},2^{-k+1}]$,
    and put $G_k:=E\setminus Q^E_{2^{-k}}(S)$.
    Since $E\setminus Q_{\rho_0}^E(S)\subset G_k$, it is enough to prove that \(\operatorname{Tr}f\in L_p(G_k,{\bar{\gamma}})\) and to obtain the corresponding estimate on \(G_k\).
    \par
    By the definition of the trace operator and the precise representative chosen in the previous section, for \(\mathcal H^d\lfloor_E\)-almost every \(x\in E\), and hence for \(\bm{\bar{\gamma}}\)-almost every \(x\in G_k\), we have
    \begin{equation}
        \operatorname{Tr}f(x)
        =
        f_{Q_{2^{-k-2}}(x)}
        +
        \sum_{j=k+2}^{\infty}
        \left(
        f_{Q_{2^{-j-1}}(x)}
        -
        f_{Q_{2^{-j}}(x)}
        \right).
    \end{equation}
    By Minkowski's inequality and the same estimate as in Proposition~\ref{Prop.tr_finit_def}, we obtain
    \begin{equation}
        \begin{split}
            \|\operatorname{Tr}f\|_{L_p(G_k,{\bar{\gamma}})}
            &\lesssim
            \left(
            \int\limits_{G_k}
            \fint\limits_{Q_{2^{-k-2}}(x)}
            |f(y)|^p\,d\bm{\gamma}(y)\,
            d\bm{\bar{\gamma}}(x)
            \right)^{\frac1p}
            \\
            &\quad+
            \sum_{j=k+2}^{\infty}
            \left(
            \int\limits_{G_k}
            \fint\limits_{Q_{2^{-j}}(x)}
            \bigl(\overline{\Delta}_{2^{-j+1}}f(y)\bigr)^p
            \,d\bm{\gamma}(y)\,
            d\bm{\bar{\gamma}}(x)
            \right)^{\frac1p}.
        \end{split}
    \end{equation}
    \par
    We now prove the estimate
    \begin{equation}
    \label{eq.tr_loc_aux}
        \int\limits_{G_k}
        \fint\limits_{Q_{2^{-j}}(x)}
        |h(y)|^p\,d\bm{\gamma}(y)\,
        d\bm{\bar{\gamma}}(x)
        \lesssim
        2^{j\theta}
        \|h\|_{L_p(\mathbb R^n,\gamma)}^p
    \end{equation}
    for every \(j\ge k+2\) and every \(h\in L_p(\mathbb R^n,\gamma)\). Indeed, by Fubini's theorem,
    \begin{equation}
        \label{eq.tr_intn1}
        \begin{split}
        \int\limits_{G_k}
        \fint\limits_{Q_{2^{-j}}(x)}
        |h(y)|^p\,d\bm{\gamma}(y)\,
        d\bm{\bar{\gamma}}(x)=
        \int\limits_{Q_{2^{-j}}(G_k)}
        |h(y)|^p
        \int\limits_{Q^E_{2^{-j}}(y)\cap G_k}
        \frac{d\bm{\bar{\gamma}}(x)}
        {\bm{\gamma}(Q_{2^{-j}}(x))}
        \,d\bm{\gamma}(y).
        \end{split}
    \end{equation}
    Let \(x^*\in Q^E_{2^{-j}}(y)\cap G_k\). Then $Q^E_{2^{-j}}(y)\cap G_k
        \subset Q^E_{2^{-j+1}}(x^*)$. Moreover, since \(x^*\in G_k\), we have $\operatorname{dist}(x^*,S)\ge 2^{-k}$.
    Since \(j\ge k+2\), it follows that $2^{-j+1}
        \le \frac12\operatorname{dist}(x^*,S)$.
    Hence the almost regularity condition \eqref{eq.alm_reg} applies to
    \(Q^E_{2^{-j+1}}(x^*)\). Together with the local doubling property of
    \(\bm{\gamma}\), this gives
    \begin{equation}
        \begin{split}
        \int\limits_{Q^E_{2^{-j}}(y)\cap G_k}
        \frac{d\bm{\bar{\gamma}}(x)}
        {\bm{\gamma}(Q_{2^{-j}}(x))}
        \lesssim
        \frac{\bm{\bar{\gamma}}(Q^E_{2^{-j+1}}(x^*))}
        {\bm{\gamma}(Q_{2^{-j+1}}(x^*))}
        \lesssim
        2^{j\theta}.
        \end{split}
    \end{equation}
    Substituting this estimate into \eqref{eq.tr_intn1}, we obtain
    \eqref{eq.tr_loc_aux}.
    \par
    Applying \eqref{eq.tr_loc_aux} with \(j=k+2\) and \(h=f\), and then with
    \(h=\overline{\Delta}_{2^{-j+1}}f\), \(j\ge k+2\), we get
    \begin{equation}
    \label{eq.tr_loc_int}
    \begin{split}
         \|\operatorname{Tr}f\|_{L_p(G_k,{\bar{\gamma}})}
         \lesssim
         2^{\frac{k\theta}{p}}
         \|f\|_{L_p(\mathbb R^n,\gamma)}
         +
         \sum_{j=k+2}^{\infty}
         2^{\frac{j\theta}{p}}
         \|\overline{\Delta}_{2^{-j+1}}f\|_{L_p(\mathbb R^n,\gamma)}
         \lesssim
         2^{\frac{k\theta}{p}}
         \|f\|_{B^{\frac{\theta}{p}}_{p,1}(\mathbb R^n,\gamma)}.
    \end{split}
    \end{equation}
    Since \(k\) is fixed by \(\rho_0\), this proves \eqref{eq.lm1_dir_st}.
\end{proof}
\begin{Lm}
\label{Lm.2dir}
     For every \(f\in B^{\frac{\theta}{p}}_{p,1}(\mathbb{R}^{n},\gamma)\) and each
     \(x_0\in S\), the point \(x_0\) is a Lebesgue point of the precise representative of \(f\). More precisely, the limit
    \begin{equation}
        \operatorname{Tr}f(x_0)
        :=
        \lim_{r\to0}
        \fint\limits_{Q_r(x_0)}f(y)\,dy
    \end{equation}
    exists and is finite, and
    \begin{equation}
        \lim_{r\to0}
        \fint\limits_{Q_r(x_0)}
        |f(y)-\operatorname{Tr}f(x_0)|\,dy
        =
        0.
    \end{equation}
    Moreover, there exists a positive constant \(C>0\), depending only on the fixed data, such that
    \begin{equation}
    \label{eq.lm2_dir_st}
       \sum_{x_0\in S}|\operatorname{Tr}f(x_0)|
       \le
       C
       \|f\|_{B^{\frac{\theta}{p}}_{p,1}(\mathbb{R}^n,\gamma)}.
    \end{equation}
\end{Lm}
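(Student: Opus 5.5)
The argument mirrors Proposition~\ref{Prop.tr_finit_def} and Proposition~\ref{Prop.tr_ext_step_2}, but at the single point \(x_0\) rather than almost everywhere. The role of \(\hat\gamma_\theta\) is played by the divergence \(\bm{\gamma}(Q_r(x_0))/r^\theta\to\infty\), which holds because \(x_0\in RS_{p,\theta}(\gamma)\).

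\emph{Dyadic averages converge.} Fix \(x_0\in S\) and set \(w(r):=\bm{\gamma}(Q_r(x_0))\). By \eqref{eq.tr_ex2} and the weighted averaging inequality \eqref{eq.equiv_muck},
\begin{equation*}
    \bigl|f_{Q_{2^{-k-1}}(x_0)}-f_{Q_{2^{-k}}(x_0)}\bigr|
    \lesssim
    w(2^{-k})^{-\frac1p}
    \bigl\|\overline{\Delta}_{2^{-k+1}}f\bigr\|_{L_p(\mathbb R^n,\gamma)}
    =
    \left(\frac{2^{-k\theta}}{w(2^{-k})}\right)^{\frac1p}
    2^{\frac{k\theta}{p}}\bigl\|\overline{\Delta}_{2^{-k+1}}f\bigr\|_{L_p(\mathbb R^n,\gamma)}.
\end{equation*}
Since \(w(r)/r^\theta\to\infty\), the quantity \(\sup_{r\in(0,1]}r^\theta/w(r)\) is finite. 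Here we use local doubling, together with the fact that \(w(r)>0\) is continuous-ish on \([r_0,1]\) up to doubling constants. Summing over \(k\) and using Remark~\ref{Rm.equiv_seminorm}, we get
\begin{equation*}
    \sum_{k\ge0}\bigl|f_{Q_{2^{-k-1}}(x_0)}-f_{Q_{2^{-k}}(x_0)}\bigr|
    \le C\,\|f\|_{b^{\frac\theta p}_{p,1}(\mathbb R^n,\gamma)}.
\end{equation*}
Hence the dyadic limit \(\operatorname{Tr}f(x_0)\) exists. The same bound gives the tail estimate \(|f_{Q_{2^{-k}}(x_0)}-\operatorname{Tr}f(x_0)|\to0\). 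Moreover, the factor \((2^{-k\theta}/w(2^{-k}))^{1/p}\) tends to zero, which I keep for use below.

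\emph{Lebesgue point property.} For \(r\in[2^{-k-1},2^{-k})\), I split the oscillation exactly as in \eqref{eq.tr_ex8}. The first term is bounded by
\begin{equation*}
    \fint_{Q_r(x_0)}\overline{\Delta}_{2^{-k+1}}f\,dy
    \lesssim
    \bigl(w(r)^{-1}\bigr)^{\frac1p}\bigl\|\overline{\Delta}_{2^{-k+1}}f\bigr\|_{L_p(\mathbb R^n,\gamma)}
    \lesssim
    \left(\frac{r^\theta}{w(r)}\right)^{\frac1p}\|f\|_{B^{\frac\theta p}_{p,1}(\mathbb R^n,\gamma)},
\end{equation*}
which tends to \(0\). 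Unlike in Proposition~\ref{Prop.tr_ext_step_2}, no appeal to Theorem~\ref{Th.leb_points_ref} is needed, since the divergence of \(w(r)/r^\theta\) alone forces decay. The second term tends to \(0\) by the previous step. Thus the trace at \(x_0\) exists in the sense of the statement. This is consistent with the precise representative, because \(\{x_0\}\) is \(\mathcal H^d\lfloor_E\)-null.

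\emph{Norm bound.} I write
\begin{equation*}
    \operatorname{Tr}f(x_0)=f_{Q_1(x_0)}+\sum_{k\ge0}\bigl(f_{Q_{2^{-k-1}}(x_0)}-f_{Q_{2^{-k}}(x_0)}\bigr).
\end{equation*}
By \eqref{eq.equiv_muck},
\begin{equation*}
    |f_{Q_1(x_0)}|\lesssim w(1)^{-\frac1p}\|f\|_{L_p(\mathbb R^n,\gamma)}.
\end{equation*}
The series is controlled by the first step. Summing over the finitely many \(x_0\in S\) gives \eqref{eq.lm2_dir_st}.

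The only delicate point is uniformity of the constant, namely bounding \(\sup_{r\le1}r^\theta/w(r)\) and \(w(1)^{-1}\) in terms of the fixed data. The divergence \(w(r)/r^\theta\to\infty\) gives \(w(r)\ge r^\theta\) for \(r<r_1(x_0)\). On the range \([r_1,1]\), doubling gives \(w(r)\ge w(r_1)\gtrsim r_1^\theta\). Since \(S\) is finite and part of the data, these constants are legitimately part of ``the fixed data''. Condition \eqref{eq.hard_in_cond} is not needed here; I expect it to enter only in the \(L_p\)-estimate of \(\operatorname{Tr}f-\operatorname{Tr}f(x_0)\) via Hardy's inequality.
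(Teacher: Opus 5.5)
Your proposal is correct and follows essentially the same route as the paper, which likewise extracts a uniform bound $r^\theta/\bm{\gamma}(Q_r(x_0))\le C_S$ from the divergence at the finitely many points of $S$, telescopes the dyadic averages via \eqref{eq.tr_ex2} and \eqref{eq.equiv_muck}, and controls $|f_{Q_1(x_0)}|$ by $\|f\|_{L_p(\mathbb R^n,\gamma)}$. The only harmless difference is in the Lebesgue-point step: the paper uses that the terms $2^{k\theta/p}\|\overline{\Delta}_{2^{-k}}f\|_{L_p(\mathbb R^n,\gamma)}$ of a convergent series tend to zero, while you use the decaying factor $(r^\theta/\bm{\gamma}(Q_r(x_0)))^{1/p}$; also, the lower bound on $[r_1,1]$ comes from monotonicity of $r\mapsto\bm{\gamma}(Q_r(x_0))$, not from doubling.
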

\begin{proof}
If \(S=\emptyset\), there is nothing to prove. Thus assume that \(S\neq\emptyset\).
\par
Since \(S\) is finite and each point \(x_0\in S\) is a point of \(p\)-rapid singularity of degree \(\theta\), there exists a constant \(C_S>0\) such that
\begin{equation}
\label{eq.tr_dir7}
    \frac{r^\theta}{\bm{\gamma}(Q_r(x_0))}
    \le
    C_S
\end{equation}
for all \(x_0\in S\) and all \(r\in(0,1]\).
\par
Fix \(x_0\in S\). We first prove that the sequence $\left\{f_{Q_{2^{-k}}(x_0)}\right\}_{k=1}^{\infty}$ is Cauchy. Indeed, for every \(l>k\), we have
\begin{equation}
\label{eq.tr_dir5}
\begin{split}
    \left|
    f_{Q_{2^{-k}}(x_0)}
    -
    f_{Q_{2^{-l}}(x_0)}
    \right|
    \le
    \sum_{j=k}^{l-1}
    \left|
    f_{Q_{2^{-j}}(x_0)}
    -
    f_{Q_{2^{-j-1}}(x_0)}
    \right|
    \lesssim
    \sum_{j=k}^{l-1}
    \fint\limits_{Q_{2^{-j}}(x_0)}
    \overline{\Delta}_{2^{-j+1}}f(x)\,dx.
\end{split}
\end{equation}
Using the weighted averaging inequality \eqref{eq.equiv_muck}, we obtain
\begin{equation}
\label{eq.tr_dir6}
\begin{split}
    \left|
    f_{Q_{2^{-k}}(x_0)}
    -
    f_{Q_{2^{-l}}(x_0)}
    \right|
    &\lesssim
    \sum_{j=k}^{l-1}
    2^{\frac{j\theta}{p}}
    \left({2^{-j\theta}}
    \fint\limits_{Q_{2^{-j}}(x_0)}
    \bigl(\overline{\Delta}_{2^{-j+1}}f(x)\bigr)^p
    \,d\bm{\gamma}(x)
    \right)^{\frac1p}
    \\
    &\lesssim
    \sum_{j=k}^{l-1}
    2^{\frac{j\theta}{p}}
    \|\overline{\Delta}_{2^{-j+1}}f\|_{L_p(\mathbb R^n,\gamma)}.
\end{split}
\end{equation}
The last series converges by the definition of
\(B^{\frac{\theta}{p}}_{p,1}(\mathbb R^n,\gamma)\). Hence
\(\{f_{Q_{2^{-k}}(x_0)}\}_{k=1}^{\infty}\) is Cauchy, and therefore
\begin{equation}
    \operatorname{Tr}f(x_0)
    :=
    \lim_{k\to\infty}
    f_{Q_{2^{-k}}(x_0)}
\end{equation}
exists and is finite.
\par
We next prove that \(x_0\) is a Lebesgue point of the precise representative. Let
\(r\in(0,\frac12]\), and let \(k(r)\in\mathbb N\) be the unique integer such that $r\in[2^{-k(r)-1},2^{-k(r)})$.
Then
\begin{equation}
\begin{split}
    \fint\limits_{Q_r(x_0)}
    |f(x)-\operatorname{Tr}f(x_0)|\,dx
    \le
    \fint\limits_{Q_r(x_0)}
    |f(x)-f_{Q_{2^{-k(r)}}(x_0)}|\,dx
    +
    |f_{Q_{2^{-k(r)}}(x_0)}-\operatorname{Tr}f(x_0)|.
\end{split}
\end{equation}
The second term tends to \(0\) by the definition of
\(\operatorname{Tr}f(x_0)\). For the first term, using the inclusion $Q_{2^{-k(r)}}(x_0)\subset Q_{2^{-k(r)+1}}(x)$ for $x\in Q_r(x_0)$,
we obtain
\begin{equation}
\begin{split}
    \fint\limits_{Q_r(x_0)}
    |f(x)-f_{Q_{2^{-k(r)}}(x_0)}|\,dx
    \lesssim
    \fint\limits_{Q_r(x_0)}
    \overline{\Delta}_{2^{-k(r)+1}}f(x)\,dx
    \lesssim\\
    \left(
    \fint\limits_{Q_r(x_0)}
    \bigl(\overline{\Delta}_{2^{-k(r)+1}}f(x)\bigr)^p
    \,d\bm{\gamma}(x)
    \right)^{\frac1p}
    \lesssim
    2^{k(r)\frac{\theta}{p}}\|\overline{\Delta}_{2^{-k(r)+1}}f\|_{L_p(\mathbb R^n,\gamma)}.
\end{split}
\end{equation}
Since $2^{\frac{k\theta}{p}} \|\overline{\Delta}_{2^{-k}}f\|_{L_p(\mathbb R^n,\gamma)}$ is summable in \(k\), we have $2^{k\frac{\theta}{p}}\|\overline{\Delta}_{2^{-k}}f\|_{L_p(\mathbb R^n,\gamma)}
    \to0
    \qquad\text{as }k\to\infty$.
Thus
\begin{equation}
    \fint\limits_{Q_r(x_0)}
    |f(x)-\operatorname{Tr}f(x_0)|\,dx
    \to0
    \qquad\text{as }r\to0.
\end{equation}
Therefore \(x_0\) is a Lebesgue point of \(f\).
\par
It remains to prove \eqref{eq.lm2_dir_st}. For each \(x_0\in S\), we have
\begin{equation}
\begin{split}
    |\operatorname{Tr}f(x_0)|
    &\le
    |f_{Q_1(x_0)}|
    +
    \sum_{j=0}^{\infty}
    \left|
    f_{Q_{2^{-j}}(x_0)}
    -
    f_{Q_{2^{-j-1}}(x_0)}
    \right|
    \\
    &\lesssim
    \|f\|_{L_p(\mathbb R^n,\gamma)}
    +
    \sum_{j=0}^{\infty}
    2^{\frac{j\theta}{p}}
    \|\overline{\Delta}_{2^{-j+1}}f\|_{L_p(\mathbb R^n,\gamma)}.
\end{split}
\end{equation}
Here we used the weighted averaging inequality for the first term and the estimate above for the differences. Summing over the finite set \(S\), we get
\begin{equation}
\label{eq.tr_dir_n2}
    \sum_{x_0\in S}
    |\operatorname{Tr}f(x_0)|
    \lesssim
    \|f\|_{L_p(\mathbb R^n,\gamma)}
    +
    \sum_{j=0}^{\infty}
    2^{\frac{j\theta}{p}}
    \|\overline{\Delta}_{2^{-j}}f\|_{L_p(\mathbb R^n,\gamma)}
    \lesssim
    \|f\|_{B^{\frac{\theta}{p}}_{p,1}(\mathbb R^n,\gamma)}.
\end{equation}
\end{proof}
We recall that \(x_0\in S\) is a generalized weighted Lebesgue point of a function \(\phi\) if $\phi-\phi(x_0)\in L_p(Q^E_{\rho_0}(x_0),{\bar{\gamma}})$. It remains to show that, for every
\(f\in B^{\frac{\theta}{p}}_{p,1}(\mathbb R^n,\gamma)\), the trace
\(\operatorname{Tr}f\) has generalized weighted Lebesgue points at each
\(x_0\in S\). To this end, we introduce some auxiliary notation. 
\par
Let \(k_0\in\mathbb N_0\) be the unique integer such that $\rho_0\in(2^{-k_0-1},2^{-k_0}]$.
For each \(x_0\in S\) and each \(k\ge k_0\), we set
\begin{equation}
    B_k(x_0)
    :=
    Q^E_{\rho_0}(x_0)\setminus Q^E_{2^{-k}}(x_0),
    \qquad
    A_k(x_0)
    :=
    Q^E_{2^{-k}}(x_0)\setminus Q^E_{2^{-k-1}}(x_0).
\end{equation}
Furthermore, for \(k\ge0\), we define
\begin{equation}
    \gamma_k^\theta(x_0)
    :=
    \frac{\bm{\gamma}(Q_{2^{-k}}(x_0))}{2^{-k\theta}}.
\end{equation}
\par
We start with several auxiliary properties.
\begin{Lm}
    \label{Lm.an_h}
    The following assertions hold.
    \begin{enumerate}
        \item For each \(x_0\in S\) and each \(k\ge k_0\),
        \begin{equation}
        \label{eq.meas_an}
            \bm{\bar{\gamma}}(A_k(x_0))
            \lesssim
            \gamma_k^\theta(x_0).
        \end{equation}
        \item For each \(x_0\in S\), the sequence
        \(\{\gamma_k^\theta(x_0)\}_{k\ge0}\) satisfies the discrete Hardy condition. Namely, if \(p>1\), then
        \begin{equation}
        \label{eq.hard_cond_disc}
            \sup_{k\in\mathbb N_0}
            \left(
            \sum_{j=0}^k\gamma_j^\theta(x_0)
            \right)^{\frac1p}
            \left(
            \sum_{j=k}^{\infty}
            \left(\gamma_j^\theta(x_0)\right)^{-\frac{1}{p-1}}
            \right)^{\frac{p-1}{p}}
            <\infty,
        \end{equation}
        while, if \(p=1\), then
        \begin{equation}
        \label{eq.hard_cond_discp1}
            \sup_{k\in\mathbb N_0}
            \left(
            \sum_{j=0}^k\gamma_j^\theta(x_0)
            \right)
            \left(\gamma_k^\theta(x_0)\right)^{-1}
            <\infty.
        \end{equation}
    \end{enumerate}
\end{Lm}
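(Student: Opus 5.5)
Both assertions are discretizations of conditions already at our disposal: (1) follows from the almost regularity estimate \eqref{eq.alm_reg} via a covering of the dyadic annulus, and (2) follows from the continuous Hardy condition \eqref{eq.hard_in_cond} (resp. \eqref{eq.hard_in_condp1}) by comparing integrals over dyadic intervals with the corresponding terms $\gamma_j^\theta(x_0)$. The tool in both parts is the local doubling property \eqref{eq.double_cond} of $\bm\gamma$.

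For (1), fix $x_0\in S$ and $k\ge k_0$. Every $x\in A_k(x_0)$ satisfies $|x-x_0|>2^{-k-1}$. Since $k\ge k_0$ gives $2^{-k}\le\rho_0$ and $S$ is $2\rho_0$-separated, we also get $\operatorname{dist}(x,S)=|x-x_0|>2^{-k-1}$ (up to a harmless constant). Put $r_k:=2^{-k-3}$. Then $r_k\le\frac12\operatorname{dist}(x,S)$, so the upper bound in \eqref{eq.alm_reg} applies to $Q^E_{r_k}(x)$ for all $x\in A_k(x_0)$. We cover $A_k(x_0)$ by cubes $Q_{r_k}(x_i)$ with $x_i\in A_k(x_0)$ and with the concentric cubes $Q_{r_k/2}(x_i)$ pairwise disjoint. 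Ahlfors--David $d$-regularity shows that the number of such cubes is bounded by a constant $N$ independent of $k$. For each cube, \eqref{eq.alm_reg} gives $\bm{\bar\gamma}(Q^E_{r_k}(x_i))\lesssim \bm\gamma(Q_{r_k}(x_i))/r_k^\theta$. Since $Q_{r_k}(x_i)\subset Q_{2^{-k}}(x_0)$, this is at most a constant times $\bm\gamma(Q_{2^{-k}}(x_0))2^{k\theta}=\gamma^\theta_k(x_0)$. Summing over the $N$ cubes yields \eqref{eq.meas_an}.

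For (2), set $\omega(r):=\bm\gamma(Q_r(x_0))r^{-\theta}$. For $r\in[2^{-j-1},2^{-j}]$ with $j\ge0$, monotonicity of $\bm\gamma$ together with \eqref{eq.double_cond} (for $r\le1$) gives $\omega(r)\approx\gamma_j^\theta(x_0)$, with constants independent of $j$. Hence
\begin{equation*}
\int_{2^{-k-1}}^1\omega(r)\frac{dr}{r}\approx\sum_{j=0}^{k}\gamma_j^\theta(x_0),\qquad \int_0^{2^{-k}}\omega(r)^{-\frac1{p-1}}\frac{dr}{r}\approx\sum_{j=k}^\infty(\gamma_j^\theta(x_0))^{-\frac1{p-1}}.
\end{equation*}
For the first relation, the interval $[2^{-k-1},1]$ covers exactly the indices $j=0,\dots,k$. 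The second relation is an exact dyadic decomposition. The mismatch between the endpoints $2^{-k-1}$ and $2^{-k}$ needs care, since \eqref{eq.hard_in_cond} uses the same $\rho$ in both integrals. We handle it by taking $\rho=2^{-k}$: the first integral then gives $\sum_{j=0}^{k-1}\gamma^\theta_j(x_0)$, and the missing term $\gamma_k^\theta(x_0)$ is comparable to $\gamma^\theta_{k-1}(x_0)$ by doubling, so it is absorbed for $k\ge1$. The case $k=0$ reduces to the single product $\gamma^\theta_0(x_0)^{1/p}\bigl(\sum_{j\ge0}(\gamma_j^\theta(x_0))^{-1/(p-1)}\bigr)^{(p-1)/p}$. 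This is finite because $\gamma_j^\theta(x_0)\to\infty$ and the tail is controlled by the case $k=1$. With these comparisons, \eqref{eq.hard_in_cond} with $\rho=2^{-k}$ gives \eqref{eq.hard_cond_disc}.

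For $p=1$ we argue the same way: \eqref{eq.hard_in_condp1} with $\rho=2^{-k}$, combined with $\gamma_k^\theta(x_0)\approx\omega(2^{-k})$, gives \eqref{eq.hard_cond_discp1}. The main obstacle is purely bookkeeping, namely keeping the doubling constants uniform in $k$. This holds because all radii involved lie in $(0,1]$, which is exactly the range where \eqref{eq.double_cond} applies.
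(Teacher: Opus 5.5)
Your proposal is correct and follows essentially the same route as the paper. For (1) you cover $A_k(x_0)$ by boundedly many small cubes to which \eqref{eq.alm_reg} applies and then use doubling; for (2) you use $\bm\gamma(Q_r(x_0))r^{-\theta}\approx\gamma_j^\theta(x_0)$ on $[2^{-j-1},2^{-j}]$ to discretize \eqref{eq.hard_in_cond} and \eqref{eq.hard_in_condp1}, and your handling of the endpoint choice $\rho=2^{-k}$ and of $k=0$ is more careful than the paper's.

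Two small slips in (1) are easily repaired. First, $Q_{r_k}(x_i)$ is only contained in $Q_{2^{-k+1}}(x_0)$, not $Q_{2^{-k}}(x_0)$, so one more application of doubling is needed. Second, since $\rho_0\le 2^{-k_0}$, the inequality $2^{-k}\le\rho_0$ holds only for $k\ge k_0+1$. At the single index $k=k_0$ you must either shrink the radius by an $S$-dependent factor, as the paper does with its small $\kappa$, or note that $A_{k_0}(x_0)$ has compact closure in $E\setminus S$ and hence finite $\bm{\bar\gamma}$-measure.
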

\begin{proof}
We first prove \eqref{eq.meas_an}. Fix \(x_0\in S\) and \(k\ge k_0\).
The annulus $A_k(x_0)$ can be covered by a finite number \(N=N(n)\) of relative cubes $\{Q^E_{\kappa 2^{-k}}(x_i)\}_{i=1}^{N}$,
where \(\kappa>0\) is chosen sufficiently small and \(x_i\in E\cap A_k(x_0)\). Since \(S\) is finite and \(x_0\) is isolated in \(S\), choosing \(\kappa\) sufficiently small gives $\kappa 2^{-k}\le\frac12\operatorname{dist}(x_i,S)$
for all such \(x_i\) and all \(k\ge k_0\). Hence the almost regularity condition
\eqref{eq.alm_reg} applies to each cube \(Q^E_{\kappa 2^{-k}}(x_i)\). Therefore,
using also the local doubling property of \(\bm{\gamma}\), we obtain
\begin{equation}
\begin{split}
    \bm{\bar{\gamma}}(A_k(x_0))
    \le
    \sum_{i=1}^N
    \bm{\bar{\gamma}}(Q^E_{\kappa 2^{-k}}(x_i))
    \lesssim
    \sum_{i=1}^N
    \frac{\bm{\gamma}(Q_{\kappa 2^{-k}}(x_i))}
    {2^{-k\theta}}
    \lesssim
    \frac{\bm{\gamma}(Q_{2^{-k}}(x_0))}
    {2^{-k\theta}}
    =
    \gamma_k^\theta(x_0).
\end{split}
\end{equation}
This proves \eqref{eq.meas_an}.

We now prove the discrete Hardy condition. Put
\begin{equation}
    \Psi_{x_0}(r)
    :=
    \frac{\bm{\gamma}(Q_r(x_0))}{r^\theta}.
\end{equation}
Since \(\gamma\in A_p^{\operatorname{loc}}(\mathbb R^n)\), the measure
\(\bm{\gamma}\) is locally doubling. Hence \(\Psi_{x_0}(r)\) is comparable to
\(\gamma_k^\theta(x_0)\) for \(r\in[2^{-k-1},2^{-k}]\). Consequently,
\begin{equation}
    \int_{2^{-k-1}}^{2^{-k}}
    \Psi_{x_0}(r)\,\frac{dr}{r}
    \approx
    \gamma_k^\theta(x_0),
\end{equation}
and, for \(p>1\),
\begin{equation}
    \int_{2^{-k-1}}^{2^{-k}}
    \Psi_{x_0}(r)^{-\frac{1}{p-1}}\,\frac{dr}{r}
    \approx
    \left(\gamma_k^\theta(x_0)\right)^{-\frac{1}{p-1}}.
\end{equation}
Thus the integral Hardy condition \eqref{eq.hard_in_cond} implies
\eqref{eq.hard_cond_disc}. The case \(p=1\) is obtained in the same way from
\eqref{eq.hard_in_condp1}. This completes the proof.
\end{proof}
\begin{Lm}
    \label{Lm.ren_dir}
    For every \(f\in B^{\frac{\theta}{p}}_{p,1}(\mathbb R^n,\gamma)\), the trace
    \(\operatorname{Tr}f\) has generalized weighted Lebesgue points at each
    \(x_0\in S\). Moreover, there is a constant \(C>0\) such that
    \begin{equation}
    \label{eq.sing_norm}
        \sum_{x_0\in S}
        \|\operatorname{Tr}f-\operatorname{Tr}f(x_0)\|_{L_p(Q^E_{\rho_0}(x_0),{\bar{\gamma}})}
        \le
        C
        \|f\|_{B^{\frac{\theta}{p}}_{p,1}(\mathbb R^n,\gamma)}.
    \end{equation}
\end{Lm}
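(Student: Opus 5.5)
We fix \(x_0\in S\) and split \(Q^E_{\rho_0}(x_0)\) into the dyadic annuli \(A_k(x_0)\), \(k\ge k_0\) (together with the bounded piece \(B_{k_0}(x_0)\), which is handled exactly as in Lemma~\ref{Lm.0dir}). The goal is to bound, for a.e.\ \(x\in A_k(x_0)\), the difference \(|\operatorname{Tr}f(x)-\operatorname{Tr}f(x_0)|\) by a telescoping chain of cube averages. Concretely, for \(x\in A_k(x_0)\) we write
\[
\operatorname{Tr}f(x)-\operatorname{Tr}f(x_0)=\bigl(\operatorname{Tr}f(x)-f_{Q_{2^{-k+1}}(x)}\bigr)+\bigl(f_{Q_{2^{-k+1}}(x)}-f_{Q_{2^{-k}}(x_0)}\bigr)+\bigl(f_{Q_{2^{-k}}(x_0)}-\operatorname{Tr}f(x_0)\bigr).
\]
The first bracket is a ``local'' term at scales below \(\operatorname{dist}(x,S)\approx 2^{-k}\), the middle one compares comparable cubes, and the last one is a ``singular'' tail at \(x_0\).

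For the first two brackets we argue as in Lemma~\ref{Lm.0dir}. We expand into the series \(\sum_{j\ge k-1}|f_{Q_{2^{-j-1}}(x)}-f_{Q_{2^{-j}}(x)}|\) and bound each term by \((\fint_{Q_{2^{-j}}(x)}(\overline{\Delta}_{2^{-j+1}}f)^p\,d\bm\gamma)^{1/p}\) via \eqref{eq.equiv_muck}. We then integrate in \(d\bm{\bar\gamma}\) over \(A_k(x_0)\) and apply the Fubini argument of \eqref{eq.tr_loc_aux}. The almost regularity \eqref{eq.alm_reg} applies here because \(2^{-j+1}\lesssim\operatorname{dist}(x,S)\), after shifting indices by a fixed amount or using a covering by cubes \(Q^E_{\kappa2^{-k}}(x_i)\) as in Lemma~\ref{Lm.an_h}. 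The crucial point is that the \(A_k\) have bounded overlap of their neighbourhoods, so summing over \(k\) costs nothing. This gives \(\sum_k\int_{A_k}(\cdots)^p\,d\bm{\bar\gamma}\lesssim(\sum_j 2^{j\theta/p}\|\overline{\Delta}_{2^{-j+1}}f\|_{L_p(\gamma)})^p\). One has to be slightly careful with Minkowski: apply it in \(L_p(Q^E_{\rho_0}(x_0),\bar\gamma)\) to the series indexed by the relative scale \(j-k\) instead, so that for each fixed offset the annuli contribute disjointly.

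The singular term is the main obstacle. By \eqref{eq.tr_dir6}, \(|f_{Q_{2^{-k}}(x_0)}-\operatorname{Tr}f(x_0)|\lesssim\sum_{j\ge k}a_j(\gamma_j^\theta(x_0))^{-1/p}\), where \(a_j:=(2^{-j\theta}\cdot 2^{j\theta}\int_{Q_{2^{-j}}(x_0)}(\overline{\Delta}_{2^{-j+1}}f)^p\,d\bm\gamma)^{1/p}\le\|\overline{\Delta}_{2^{-j+1}}f\|_{L_p(\gamma)}\). More precisely, we keep the normalisation \(\bigl(\frac{1}{\bm\gamma(Q_{2^{-j}}(x_0))}\int(\cdot)^p d\bm\gamma\bigr)^{1/p}=b_j(\gamma_j^\theta)^{-1/p}2^{j\theta/p}\) with \(b_j:=\|\overline{\Delta}_{2^{-j+1}}f\|_{L_p(\gamma)}\). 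Using \eqref{eq.meas_an}, the contribution to \(\|\cdot\|^p_{L_p(Q^E_{\rho_0}(x_0),\bar\gamma)}\) is bounded by
\[
\sum_{k\ge k_0}\gamma_k^\theta(x_0)\Bigl(\sum_{j\ge k}(\gamma_j^\theta(x_0))^{-1/p}c_j\Bigr)^p,\qquad c_j:=2^{j\theta/p}b_j .
\]
This is a discrete dual Hardy inequality with weights \(u_k=\gamma_k^\theta\) and \(v_j^{-1}\) built from \(\gamma_j^\theta\). Its validity with bound \(\lesssim\sum_j c_j^p\) is exactly characterised by the condition \eqref{eq.hard_cond_disc} (Muckenhoupt/Kufner discrete Hardy criterion), and for \(p=1\) by \eqref{eq.hard_cond_discp1}, where it follows directly by interchanging sums. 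Since \(\sum_j c_j^p\le(\sum_j c_j)^p\lesssim\|f\|^p_{B^{\theta/p}_{p,1}}\), this finishes the estimate.

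Summing over the finitely many \(x_0\in S\) gives \eqref{eq.sing_norm}. Finiteness of the left side shows that \(\operatorname{Tr}f-\operatorname{Tr}f(x_0)\in L_p(Q^E_{\rho_0}(x_0),\bar\gamma)\), with \(\operatorname{Tr}f(x_0)\) as in Lemma~\ref{Lm.2dir}. Hence \(x_0\) is a generalized weighted Lebesgue point, and by Lemma~\ref{Lm.alm_reg_bas_pr}(2) the constant \(\operatorname{Tr}f(x_0)\) coincides with \(\phi(x_0)\).
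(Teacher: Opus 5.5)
Your overall route is the paper's. You split $\operatorname{Tr}f(x)-\operatorname{Tr}f(x_0)$ into oscillation terms centred at $x$ at scales $\lesssim|x-x_0|$ and a tail of dyadic differences at $x_0$; the paper gets this from Lemma~\ref{Lm.poinw_dif}, you by explicit telescoping, which is equivalent. The local terms go through Fubini and almost regularity on small cubes. The tail goes through $\bm{\bar\gamma}(A_k(x_0))\lesssim\gamma_k^\theta(x_0)$ and the weighted discrete Hardy inequality under \eqref{eq.hard_cond_disc}/\eqref{eq.hard_cond_discp1}. The singular part is correct as you describe it. So is the middle bracket, which has one scale per annulus and hence no summation issue.

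The step that fails as written is your final prescription for the local series: Minkowski indexed by the relative scale $m=j-k$.
\begin{itemize}
\item Put $F_j(x):=\bigl(\fint_{Q_{2^{-j}}(x)}(\overline{\Delta}_{2^{-j+1}}f)^p\,d\bm{\gamma}\bigr)^{1/p}$ and $X_{k,j}:=\int_{A_k(x_0)}F_j^p\,d\bm{\bar\gamma}$ for $k\le j+1$. Your route bounds the local part by $\sum_{m\ge-1}\bigl(\sum_k X_{k,k+m}\bigr)^{1/p}$.
\item For fixed $m$ the summands involve different functions $\overline{\Delta}_{2^{-(k+m)+1}}f$. Bounded overlap of the annuli therefore does not let you combine them.
\item What the Besov norm actually controls is the column quantity $\sum_j\bigl(\sum_kX_{k,j}\bigr)^{1/p}\lesssim\sum_j2^{j\theta/p}\|\overline{\Delta}_{2^{-j+1}}f\|_{L_p(\mathbb R^n,\gamma)}$.
\item For $p>1$ the diagonal quantity is not dominated by the column one. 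Take $X_{k,j}=\delta_{j,N}$ for $k_0\le k\le N$, i.e.\ oscillation of $f$ at essentially one scale $2^{-N}$, spread evenly over the annuli. The diagonal sum is then $N-k_0+1$, while the column sum is $(N-k_0+1)^{1/p}$.
\item For $p=1$ the step is harmless, since you can regroup by $j=k+m$.
\end{itemize}

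The fix is the paper's order of summation. Fix the absolute scale $j$ and apply Minkowski in $j$ to $\sum_j F_j$, with each $F_j$ restricted to the annuli $A_k(x_0)$ with $k\le j+1$; this is the analogue of the paper's cut-off $\chi_{B_{j+3}(x_0)}$. For each $j$, run a single Fubini computation over the union of those annuli, all of which involve the same function $\overline{\Delta}_{2^{-j+1}}f$. This gives $\lesssim 2^{j\theta/p}\|\overline{\Delta}_{2^{-j+1}}f\|_{L_p(\mathbb R^n,\gamma)}$ for each $j$, which is summable in $j$ by the $B^{\theta/p}_{p,1}$ norm.

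Equivalently, apply Minkowski in $j$ on each annulus, then the triangle inequality in $\ell^p_k$ together with bounded overlap for fixed $j$. That is what your ``bounded overlap'' sentence implicitly does, and it works; the ``instead'' is what breaks it. With this correction your argument coincides with the paper's proof.
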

\begin{proof}[Proof of Lemma~\ref{Lm.ren_dir}]
If \(S=\emptyset\), there is nothing to prove. Thus assume that \(S\neq\emptyset\). Fix \(f\in B^{\frac{\theta}{p}}_{p,1}(\mathbb R^n,\gamma)\) and
\(x_0\in S\).
For \(x\in Q^E_{\rho_0}(x_0)\setminus\{x_0\}\), let \(k(x)\in\mathbb N_0\) be the unique integer such that $|x-x_0|\in[2^{-k(x)-1},2^{-k(x)})$.
\par
\emph{Step 1.}
Let \(x\in Q^E_{\rho_0}(x_0)\) be a Lebesgue point of the precise representative of \(f\). This holds for \(\mathcal H^d\lfloor_E\)-almost every \(x\in E\) by Corollary~\ref{Ca.coreg_tr}. By Lemma~\ref{Lm.2dir}, \(x_0\) is also a Lebesgue point of \(f\). Hence Lemma~\ref{Lm.poinw_dif} and Remark~\ref{Rm.point_dif_leb} give
\begin{equation}
\label{eq.tr_dir_nnn1}
\begin{split}
    |\operatorname{Tr}f(x)-\operatorname{Tr}f(x_0)|
    \lesssim
    \sum_{j=k(x)-2}^{\infty}
    \biggl(
    \inf_{c\in\mathbb R}
    \fint\limits_{Q_{2^{-j}}(x)}
    |f(y)-c|\,dy
    +
    \inf_{c\in\mathbb R}
    \fint\limits_{Q_{2^{-j}}(x_0)}
    |f(y)-c|\,dy
    \biggr).
\end{split}
\end{equation}
Fix \(\kappa\in(0,2^{-5}]\). 
For every \(w\in Q_{\kappa2^{-j}}(x)\), the inclusion $Q_{2^{-j}}(x)\subset Q_{2^{-j+1}}(w)$ holds. Therefore,
\begin{equation}
\label{eq.tr_dir_nnn2}
\begin{split}
    \inf_{c\in\mathbb R}
    \fint\limits_{Q_{2^{-j}}(x)}
    |f(y)-c|\,dy
    \le
    \fint\limits_{Q_{2^{-j}}(x)}
    |f(y)-f(w)|\,dy
    \lesssim
    \overline{\Delta}_{2^{-j+1}}f(w).
\end{split}
\end{equation}
Averaging this estimate over \(w\in Q_{\kappa2^{-j}}(z)\) and applying
\eqref{eq.equiv_muck}, we get
\begin{equation}
    \inf_{c\in\mathbb R}
    \fint\limits_{Q_{2^{-j}}(x)}
    |f(y)-c|\,dy
    \lesssim  \left(
    \fint\limits_{Q_{\kappa2^{-j}}(x)}
    \bigl(\overline{\Delta}_{2^{-j+1}}f(w)\bigr)^p
    \,d\bm{\gamma}(w)
    \right)^{\frac1p} =:
    f_j(x).
\end{equation}
Thus, for \(\mathcal H^d\lfloor_E\)-almost every
\(x\in Q^E_{\rho_0}(x_0)\),
\begin{equation}
\label{eq.tr_dir_point_est}
\begin{split}
    |\operatorname{Tr}f(x)-\operatorname{Tr}f(x_0)|
    &\lesssim
    \sum_{j=k(x)-2}^{\infty} f_j(x)
    +
    \sum_{j=k(x)-2}^{\infty} f_j(x_0)
    \\
    &\le
    \sum_{j=k_0-2}^{\infty} f_j(x)\chi_{B_{j+3}(x_0)}(x)
    +
    \sum_{j=k(x)-2}^{\infty} f_j(x_0).
\end{split}
\end{equation}
\par
By Minkowski's inequality and the decomposition into annuli \(A_k(x_0)\), we obtain
\begin{equation}
\label{eq.tr_dir_n10}
\begin{split}
    \|\operatorname{Tr}f-\operatorname{Tr}f(x_0)\|_{L_p(Q_{\rho_0}^E(x_0),{\bar{\gamma}})}
    \lesssim
    \sum_{j=k_0-2}^{\infty}
    \|f_j\chi_{B_{j+3}(x_0)}\|_{L_p(Q_{\rho_0}^E(x_0),{\bar{\gamma}})}
    \\+
    \left(
    \sum_{k=k_0}^{\infty}
    \bm{\bar{\gamma}}(A_k(x_0))
    \left(
    \sum_{j=k-2}^{\infty} f_j(x_0)
    \right)^p
    \right)^{\frac1p}=:S_1+S_2.
\end{split}
\end{equation}

\emph{Step 2}. First, we estimate $S_1$. For each fixed \(j\ge k_0-2\), Fubini's theorem gives
\begin{equation}
\label{eq.tr_dir_nnn3}
\begin{split}
    &\|f_j\chi_{B_{j+3}(x_0)}\|_{L_p(Q_{\rho_0}^E(x_0),{\bar{\gamma}})}^p=
    \int\limits_{B_{j+3}(x_0)}
    \fint\limits_{Q_{\kappa2^{-j}}(x)}
    \bigl(\overline{\Delta}_{2^{-j+1}}f(y)\bigr)^p
    \,d\bm{\gamma}(y)\,d\bm{\bar{\gamma}}(x)
    \\
    &\quad=
    \int\limits_{\mathbb{R}^n}
    \bigl(\overline{\Delta}_{2^{-j+1}}f(y)\bigr)^p
    \int\limits_{Q^E_{\kappa2^{-j}}(y)\cap B_{j+3}(x_0)}
    \frac{d\bm{\bar{\gamma}}(x)}
    {\bm{\gamma}(Q_{\kappa2^{-j}}(x))}
    \,d\bm{\gamma}(y).
\end{split}
\end{equation}
Assume that the intersection \(Q^E_{\kappa2^{-j}}(y)\cap B_{j+3}(x_0)\neq\emptyset\) is nonempty, and choose $x^*\in Q^E_{\kappa2^{-j}}(y)\cap B_{j+3}(x_0)$.
Then $Q^E_{\kappa2^{-j}}(y)\cap B_{j+3}(x_0)
    \subset
    Q^E_{2\kappa2^{-j}}(x^*)$.
Moreover, by the definition of \(B_{j+3}(x_0)\) and by the choice
\(\kappa\le 2^{-5}\), the cube \(Q^E_{2\kappa2^{-j}}(x^*)\) stays away from \(S\) at the scale at which the almost regularity condition applies. Hence, using \eqref{eq.alm_reg} and the local doubling property of \(\bm{\gamma}\), we obtain
\begin{equation}
\label{eq.inner_s1}
\begin{split}
    \int\limits_{Q^E_{\kappa2^{-j}}(y)\cap B_{j+3}(x_0)}
    \frac{d\bm{\bar{\gamma}}(x)}
    {\bm{\gamma}(Q_{\kappa2^{-j}}(x))}
    \lesssim
    \frac{
    \bm{\bar{\gamma}}(Q^E_{2\kappa2^{-j}}(x^*))
    }
    {
    \bm{\gamma}(Q_{2\kappa2^{-j}}(x^*))
    }
    \lesssim
    2^{j\theta}.
\end{split}
\end{equation}
Substituting \eqref{eq.inner_s1} into \eqref{eq.tr_dir_nnn3}, we get
\begin{equation}
    \|f_j\chi_{B_{j+3}(x_0)}\|_{L_p(Q_{\rho_0}^E(x_0),{\bar{\gamma}})}
    \lesssim
    2^{\frac{j\theta}{p}}
    \|\overline{\Delta}_{2^{-j+1}}f\|_{L_p(\mathbb R^n,\gamma)}.
\end{equation}
Consequently,
\begin{equation}
\label{eq.tr_dir_nn1}
    S_1
    \lesssim
    \sum_{j=k_0-2}^{\infty}
    2^{\frac{j\theta}{p}}
    \|\overline{\Delta}_{2^{-j+1}}f\|_{L_p(\mathbb R^n,\gamma)}
    \lesssim
    \|f\|_{B^{\frac{\theta}{p}}_{p,1}(\mathbb R^n,\gamma)}.
\end{equation}

\emph{Step 3}. Second, we estimate $S_2$.
By \eqref{eq.meas_an}, $\bm{\bar{\gamma}}(A_k(x_0))
    \lesssim
    \gamma_k^\theta(x_0)$.
Moreover, by the local doubling property of \(\bm{\gamma}\),
\begin{equation}
    f_j(x_0)
    \lesssim
    \left(\gamma_j^\theta(x_0)\right)^{-\frac1p}
    2^{\frac{j\theta}{p}}
    \|\overline{\Delta}_{2^{-j+1}}f\|_{L_p(\mathbb R^n,\gamma)}.
\end{equation}
Therefore,
\begin{equation}
\label{eq.s2_before_hardy}
\begin{split}
    S_2
    &\lesssim
    \left(
    \sum_{k=k_0}^{\infty}
    \gamma_k^\theta(x_0)
    \left(
    \sum_{j=k-2}^{\infty}
    \left(\gamma_j^\theta(x_0)\right)^{-\frac1p}
    2^{\frac{j\theta}{p}}
    \|\overline{\Delta}_{2^{-j+1}}f\|_{L_p(\mathbb R^n,\gamma)}
    \right)^p
    \right)^{\frac1p}.
\end{split}
\end{equation}
By Lemma~\ref{Lm.an_h}, the sequence
\(\{\gamma_j^\theta(x_0)\}_{j\ge0}\) satisfies the discrete Hardy condition.
Hence the weighted discrete Hardy inequality gives
\begin{equation}
\label{eq.tr_dir_nn2}
\begin{split}
    S_2
    &\lesssim
    \left(
    \sum_{j=k_0-2}^{\infty}
    \left(
    2^{\frac{j\theta}{p}}
    \|\overline{\Delta}_{2^{-j+1}}f\|_{L_p(\mathbb R^n,\gamma)}
    \right)^p
    \right)^{\frac1p}
    \\
    &\lesssim
    \sum_{j=k_0-2}^{\infty}
    2^{\frac{j\theta}{p}}
    \|\overline{\Delta}_{2^{-j+1}}f\|_{L_p(\mathbb R^n,\gamma)}
    \lesssim
    \|f\|_{B^{\frac{\theta}{p}}_{p,1}(\mathbb R^n,\gamma)}.
\end{split}
\end{equation}
For \(p=1\), the same conclusion follows from the \(p=1\) version of the discrete Hardy inequality, using \eqref{eq.hard_cond_discp1}.

Combining \eqref{eq.tr_dir_n10}, \eqref{eq.tr_dir_nn1}, and
\eqref{eq.tr_dir_nn2}, we obtain $\operatorname{Tr}f-\operatorname{Tr}f(x_0)
    \in
    L_p(Q^E_{\rho_0}(x_0),{\bar{\gamma}})$
and
\begin{equation}
    \|\operatorname{Tr}f-\operatorname{Tr}f(x_0)\|_{L_p(Q^E_{\rho_0}(x_0),{\bar{\gamma}})}
    \lesssim
    \|f\|_{B^{\frac{\theta}{p}}_{p,1}(\mathbb R^n,\gamma)}.
\end{equation}
Summing over the finite set \(S\) gives \eqref{eq.sing_norm}. The proof is complete.
\end{proof}
\begin{Prop}
\label{Prop.direct_trace}
    For every \(f\in B^{\frac{\theta}{p}}_{p,1}(\mathbb R^n,\gamma)\), one has $\operatorname{Tr}f
        \in
        \mathfrak L_p(E,{\bar{\gamma}},S)$.
    Moreover, there is a constant \(C>0\), independent of \(f\), such that
    \begin{equation}
        \|\operatorname{Tr}f\|_{\mathfrak L_p(E,{\bar{\gamma}},S)}
        \le
        C
        \|f\|_{B^{\frac{\theta}{p}}_{p,1}(\mathbb R^n,\gamma)}.
    \end{equation}
\end{Prop}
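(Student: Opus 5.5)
This proposition is essentially an assembly of Lemmas~\ref{Lm.0dir}, \ref{Lm.2dir} and~\ref{Lm.ren_dir}. Fix \(f\in B^{\frac{\theta}{p}}_{p,1}(\mathbb R^n,\gamma)\), identified with its precise representative. By Corollary~\ref{Ca.coreg_tr}, \(\operatorname{Tr}f\) exists as an \(\mathcal H^d\lfloor_E\)-equivalence class. Before using the norm of \(\mathfrak L_p(E,\bar\gamma,S)\), I would check that this space is well defined for \(\bar\gamma\). Lemma~\ref{Lm.alm_reg_bas_pr} gives \(\bar\gamma\in L_1^{\operatorname{loc}}(E\setminus S)\). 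Since \(\gamma\in A_p^{\operatorname{loc}}\) and \(\bm\gamma\) is therefore locally doubling, the same lemma also gives \(\bm{\bar\gamma}(Q^E_r(x_0))=\infty\) for every \(x_0\in S\) and \(r>0\).

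First I verify condition (1) of the definition of \(\mathfrak L_p\). Lemma~\ref{Lm.0dir} gives \(\operatorname{Tr}f\in L_p(E\setminus Q^E_{\rho_0}(S),\bar\gamma)\), with norm bounded by \(C\|f\|_{B^{\theta/p}_{p,1}(\mathbb R^n,\gamma)}\).

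Next I verify condition (2). Fix \(x_0\in S\). Lemma~\ref{Lm.2dir} defines the number \(\operatorname{Tr}f(x_0)=\lim_{r\to0}f_{Q_r(x_0)}\) and bounds \(\sum_{x_0\in S}|\operatorname{Tr}f(x_0)|\). Lemma~\ref{Lm.ren_dir} shows that \(\operatorname{Tr}f-\operatorname{Tr}f(x_0)\in L_p(Q^E_{\rho_0}(x_0),\bar\gamma)\), with the summed bound \eqref{eq.sing_norm}. Together these say that \(x_0\) is a generalized weighted Lebesgue point of \(\operatorname{Tr}f\) with \(a=\operatorname{Tr}f(x_0)\). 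The uniqueness of \(a\), which follows from \(\bm{\bar\gamma}(Q^E_{\rho_0}(x_0))=\infty\), guarantees that this \(a\) is the value \((\operatorname{Tr}f)(x_0)\) entering the \(\mathfrak L_p\)-norm. So the number from Lemma~\ref{Lm.2dir} and the abstract value in the definition coincide.

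Adding the three bounds gives
\[
\|\operatorname{Tr}f\|_{\mathfrak L_p(E,\bar\gamma,S)}\le C\|f\|_{B^{\frac{\theta}{p}}_{p,1}(\mathbb R^n,\gamma)},
\]
with \(C\) depending only on the fixed data, because \(S\) is finite. There is no real obstacle here. The only point needing care is the identification of the two notions of value at \(x_0\) in the previous paragraph, and the uniqueness argument settles it. All the substantive estimates, notably the Hardy inequality, are already contained in the earlier lemmas.
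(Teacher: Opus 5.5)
Your proposal is correct and matches the paper's proof: the paper also obtains the proposition by combining Lemma~\ref{Lm.0dir}, Lemma~\ref{Lm.2dir} and Lemma~\ref{Lm.ren_dir} and adding the three bounds, which is legitimate because $S$ is finite. Your extra checks make explicit what the paper leaves to the phrase ``using the definition of $\mathfrak L_p(E,\bar\gamma,S)$'': well-definedness of $\mathfrak L_p(E,\bar\gamma,S)$ via Lemma~\ref{Lm.alm_reg_bas_pr}, and the fact that uniqueness of the value at $x_0$ identifies $\operatorname{Tr}f(x_0)$ from Lemma~\ref{Lm.2dir} with the generalized Lebesgue value.
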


\begin{proof}
    By Lemma~\ref{Lm.0dir}, $\operatorname{Tr}f
        \in
        L_p(E\setminus Q_{\rho_0}^E(S),{\bar{\gamma}})$
    and
    \begin{equation}
        \|\operatorname{Tr}f\|_{L_p(E\setminus Q_{\rho_0}^E(S),{\bar{\gamma}})}
        \lesssim
        \|f\|_{B^{\frac{\theta}{p}}_{p,1}(\mathbb R^n,\gamma)}.
    \end{equation}
    By Lemma~\ref{Lm.2dir},
    \begin{equation}
        \sum_{x_0\in S}|\operatorname{Tr}f(x_0)|
        \lesssim
        \|f\|_{B^{\frac{\theta}{p}}_{p,1}(\mathbb R^n,\gamma)}.
    \end{equation}
    Finally, Lemma~\ref{Lm.ren_dir} gives
    \begin{equation}
        \sum_{x_0\in S}
        \|\operatorname{Tr}f-\operatorname{Tr}f(x_0)\|_{L_p(Q^E_{\rho_0}(x_0),{\bar{\gamma}})}
        \lesssim
        \|f\|_{B^{\frac{\theta}{p}}_{p,1}(\mathbb R^n,\gamma)}.
    \end{equation}
    Combining these estimates and using the definition of
    \(\mathfrak L_p(E,{\bar{\gamma}},S)\), we obtain the desired bound.
\end{proof}
\section{The inverse trace theorem}
In this section, we prove the inverse part of the trace theorem. We construct a nonlinear extension operator and prove its boundedness. We mention that our construction is inspired by the recent paper \cite{tyulenev2026}.
\par
Throughout this section, we fix the following data:
\begin{itemize}
    \item a number \(d\in(0,n)\) and an Ahlfors--David \(d\)-regular set \(E\subset\mathbb R^n\);
    \item an integrability parameter \(p\in[1,\infty)\);
    \item a codimension parameter \(\theta\in(0,p)\);
    \item a weight \(\gamma\in A_p^{\operatorname{loc}}(\mathbb R^n)\) such that \(E\) is Ahlfors--David codimension-\(\theta\) almost regular with respect to \(\gamma\).
\end{itemize}
We start with the construction of a special family of smooth functions.
\subsection{Extension family}
\begin{Def}
    For every \(k\in\mathbb Z\) and every \(m\in\mathbb Z^n\), let \(Q_{k,m}\) denote the closed dyadic cube
    \begin{equation}
        Q_{k,m}
        :=
        \prod_{i=1}^n
        \left[
        \frac{m_i}{2^k},
        \frac{m_i+1}{2^k}
        \right].
    \end{equation}
    For each \(k\in\mathbb Z\), we denote by \(\mathcal D_k\) the collection $\mathcal D_k:=\{Q_{k,m}:m\in\mathbb Z^n\}$
    of dyadic cubes with sidelength \(2^{-k}\). Given \(c>0\), we set
    \begin{equation}
    \label{eq.ext_f_def}
        \mathcal D_k(E,c)
        :=
        \{Q\in\mathcal D_k:cQ\cap E\ne\emptyset\}.
    \end{equation}
\end{Def}
Until the end of this section, we fix two auxiliary parameters $\kappa\in\left(1,\frac32\right)$ and $\eta\in(0,\kappa-1)$. Since \(E\) and \(\kappa\) are fixed, we shall write \(\mathscr D_k:=\mathcal{D}_k(E,\kappa)\).
Let \(\psi\in C_0^\infty(\mathbb R^n)\) be such that
\begin{enumerate}
    \item \(\chi_{(1-\eta)Q_{0,0}}(x)\le \psi(x)\le \chi_{(1+\eta)Q_{0,0}}(x)\);
    \item \(\sum_{m\in\mathbb Z^n}\psi(x-m)=1\) for all \(x\in\mathbb R^n\).
\end{enumerate}
For each dyadic cube \(Q=Q_{k,m}\), we set $\psi_Q(x):=\psi(2^kx-m)$.
And for each \(k\in\mathbb Z\), we define
\begin{equation}
    g_k(x)
    :=
    \sum_{Q\in \mathscr D_k}
    \psi_Q(x).
\end{equation}
We also put
\begin{equation}
    \rho_-:=\frac{\kappa-1-\eta}{2},
    \qquad
    \rho_+:=\frac{\kappa+1+\eta}{2}.
\end{equation}
\begin{Lm}
    \label{Lm.lay}
    For every \(k\in\mathbb Z\),
    \begin{enumerate}
    \item for every $Q\in \mathcal{D}_k$ \begin{equation}
\label{eq.grad_est}
    |\nabla\psi_Q(x)|
    \le
    C2^k, \qquad \text{for all } x \in \mathbb{R}^n;
\end{equation}
\item \begin{equation}
    |\nabla g_k(x)|
    \le
    C2^k, \qquad \text{for all } x \in \mathbb{R}^n;
\end{equation}
    \item \begin{equation}
        \chi_{Q_{\rho_-2^{-k}}(E)}(x)
        \le
        g_k(x)
        \le
        \chi_{Q_{\rho_+2^{-k}}(E)}(x), \qquad \text{for all } x \in \mathbb{R}^n.
    \end{equation}
    \end{enumerate}
\end{Lm}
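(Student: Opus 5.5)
The three assertions are elementary consequences of the scaling $\psi_Q(x)=\psi(2^kx-m)$ and of the support properties of $\psi$, so I would verify them in the order stated.

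For (1), the chain rule gives $\nabla\psi_Q(x)=2^k(\nabla\psi)(2^kx-m)$. Since $\psi\in C_0^\infty(\mathbb R^n)$, we have $\|\nabla\psi\|_\infty<\infty$, and we may take $C=\|\nabla\psi\|_\infty$.

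For (2), I would use the support bound $\psi\le\chi_{(1+\eta)Q_{0,0}}$, which gives $\operatorname{supp}\psi_Q\subset(1+\eta)Q$. Since $\eta<\kappa-1<\frac12$, each point $x$ lies in $(1+\eta)Q$ for at most $N=N(n)$ cubes $Q\in\mathcal D_k$; indeed $N=2^n$ works, because $(1+\eta)Q_{0,0}$ only reaches neighbouring unit cubes by less than $\frac12$. Therefore $|\nabla g_k(x)|\le\sum_{Q\in\mathscr D_k}|\nabla\psi_Q(x)|\le N C2^k$.

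For (3), the upper bound comes first. Each $\psi_Q\ge0$, and the partition of unity gives $g_k\le\sum_{Q\in\mathcal D_k}\psi_Q=1$. If $g_k(x)>0$, then $x\in(1+\eta)Q$ for some $Q$ with $\kappa Q\cap E\neq\emptyset$. The distance (in the $\ell_\infty$ metric) from a point of $(1+\eta)Q$ to a point of $\kappa Q$ is at most $\frac{(1+\eta)+\kappa}{2}2^{-k}=\rho_+2^{-k}$, since the two sets are concentric cubes with half-sides $\frac{1+\eta}{2}2^{-k}$ and $\frac{\kappa}{2}2^{-k}$. Hence $x\in Q_{\rho_+2^{-k}}(E)$.

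For the lower bound, let $x\in Q_{\rho_-2^{-k}}(E)$. Every $Q\in\mathcal D_k$ with $\psi_Q(x)>0$ satisfies $x\in(1+\eta)Q$. It then suffices to show that every such $Q$ lies in $\mathscr D_k$, because then $g_k(x)=\sum_{Q\in\mathcal D_k}\psi_Q(x)=1$. Pick $e\in E$ with $|x-e|\le\rho_-2^{-k}$. Since $x$ lies within half-side $\frac{1+\eta}{2}2^{-k}$ of the centre of $Q$, the point $e$ lies within $\bigl(\frac{1+\eta}{2}+\rho_-\bigr)2^{-k}=\frac{\kappa}{2}2^{-k}$ of the centre. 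Thus $e\in\kappa Q$, and so $Q\in\mathscr D_k$.

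The only point needing care is this last bookkeeping: one must check that the constants $\rho_\pm$ were chosen exactly so that the half-side arithmetic closes, using that $Q_r(\cdot)$ and the dilations $cQ$ are measured in the $\ell_\infty$ norm. There is no genuine obstacle here.
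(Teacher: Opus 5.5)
Your proof is correct and follows the paper's own argument. Parts (1) and (2) go by scaling plus bounded overlap of the supports $(1+\eta)Q$, and part (3) by the same half-side bookkeeping that defines $\rho_\pm$; the only difference is that you prove the lower bound directly (every $Q$ with $x\in(1+\eta)Q$ lies in $\mathscr D_k$), whereas the paper uses the contrapositive. Your version, which picks $e\in E$ with $|x-e|\le\rho_-2^{-k}$, also covers the boundary case $\operatorname{dist}(x,E)=\rho_-2^{-k}$ of the closed neighbourhood, which the paper's strict-inequality wording skips over, and it states explicitly the bound $g_k\le 1$ that the upper inequality relies on.
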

\begin{proof}
    The first two assertions are clear from the definitions. Therefore, we only need to prove the third assertion. To this end, notice that, by the definition of \(\psi\), for each \(k\in\mathbb Z\) the family
    \(\{\psi_Q\}_{Q\in\mathcal D_k}\) forms a partition of unity. Hence, for every \(x\in\mathbb R^n\),
    \begin{equation}
        g_k(x)
        =
        \sum_{Q\in\mathcal D_k}\psi_Q(x)
        -
        \sum_{Q\notin\mathscr D_k}\psi_Q(x)
        =
        1-
        \sum_{Q\notin\mathscr D_k}\psi_Q(x).
    \end{equation}
    Let \(Q\notin \mathscr D_k\). Then \(\kappa Q\cap E=\emptyset\), and therefore $\operatorname{dist}(Q,E)\ge\frac{\kappa-1}{2}2^{-k}$.
    Since \(\operatorname{supp}\psi_Q\subset (1+\eta)Q\), we have
    \begin{equation}
        \operatorname{dist}((1+\eta)Q,E)
        \ge
        \frac{\kappa-1-\eta}{2}2^{-k}
        =
        \rho_-2^{-k}.
    \end{equation}
    Thus, if \(\operatorname{dist}(x,E)<\rho_-2^{-k}\), then
    \(x\notin\operatorname{supp}\psi_Q\) for every
    \(Q\notin\mathscr D_k\). Consequently, $g_k(x)=1$
    for all \(x\in Q_{\rho_-2^{-k}}(E)\). This proves the first inequality.
    \par
    To prove the second inequality, it is enough to observe that
    \begin{equation}
        \operatorname{supp}g_k
        \subset
        \bigcup_{Q\in\mathscr D_k}(1+\eta)Q.
    \end{equation}
    If \(Q\in\mathscr D_k\), then \(\kappa Q\cap E\ne\emptyset\). Hence every point of \((1+\eta)Q\) has distance at most $\frac{\kappa+1+\eta}{2}2^{-k} = \rho_+2^{-k}$
    from \(E\). Therefore,
    \begin{equation}
        \operatorname{supp}g_k
        \subset
        Q_{\rho_+2^{-k}}(E),
    \end{equation}
    which proves the second inequality.
\end{proof}
Next, we divide the family \(\mathscr D_k\) into two subfamilies: regular and singular cubes. To this end, we fix another parameter $\lambda\in(\kappa,2)$. For each \(Q\in\mathcal D_k(E,\kappa)\), we define the associated patch on \(E\) by $\widehat Q:=\lambda Q\cap E$.
We emphasize that \(\widehat Q\) need not be a relative cube in \(E\). However, for \(c>0\), we shall use the notation $c\widehat Q:=c\lambda Q\cap E$.
Finally, we choose $\sigma>10(\kappa+\lambda+1)$.
For each \(k\in\mathbb Z\), we split \(\mathscr D_k\) by setting
\begin{equation}
\label{eq.reg_sing_def}
    \begin{split}
    \mathscr D_k^r
    &:=
    \{Q\in\mathscr D_k:
    \operatorname{dist}(Q,S)\ge \sigma2^{-k}\},
    \\
   \mathscr D_k^s
    &:=
    \mathscr D_k\setminus\mathscr D_k^r,
    \end{split}
\end{equation}
where \(S=RS_{p,\theta}(\gamma)\cap E\).
\begin{Lm}
    \label{Lm.ext_f_pr}
    The following assertions hold.
    \begin{enumerate}
        \item For each \(k\in\mathbb Z\) and each \(\Lambda\ge1\), the collection
        \begin{equation}
            \{\Lambda\widehat Q\}_{Q\in\mathscr D_k}
        \end{equation}
        has covering multiplicity at most \(([\Lambda\lambda]+2)^n\).
        \item For every \(k\in\mathbb N_0\) and every
        \(Q\in\mathscr D_k^r\),
        \begin{equation}
        \label{eq.alm_reg_dyad}
            \frac{\bm{\gamma}(Q)}
            {\bm{\bar{\gamma}}(\widehat Q)}
            \approx
            2^{k\theta}.
        \end{equation}
    \end{enumerate}
\end{Lm}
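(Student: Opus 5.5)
Part (1) is a counting argument; part (2) comes from comparing $Q$ and $\widehat Q$ with concentric cubes centred at a point of $E$, then applying the almost regularity condition \eqref{eq.alm_reg} and the local doubling of $\bm\gamma$.

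\emph{Part (1).} For $Q\in\mathscr D_k$ we have $\Lambda\widehat Q\subset\Lambda\lambda Q$, a cube with the same centre as $Q$ and side length $\Lambda\lambda 2^{-k}$. Fix a point $x$. A cube $\Lambda\lambda Q_{k,m}$ contains $x$ only if, in each coordinate $i$, the centre $(m_i+\tfrac12)2^{-k}$ lies within $\tfrac12\Lambda\lambda2^{-k}$ of $x_i$. That is, $m_i$ must lie in an interval of length $\Lambda\lambda$. Such an interval contains at most $[\Lambda\lambda]+2$ integers, which gives multiplicity at most $([\Lambda\lambda]+2)^n$. Nothing about $E$ is used here.

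\emph{Part (2).} Fix $k\in\mathbb N_0$, $Q\in\mathscr D_k^r$ and $r:=2^{-k}$. Since $\kappa Q\cap E\neq\emptyset$, choose $x_Q\in\kappa Q\cap E$. Then $|x_Q-c_Q|\le \kappa r/2$, where $c_Q$ is the centre of $Q$. This gives the sandwich
\[
Q^E_{c_1r}(x_Q)\subset\widehat Q=\lambda Q\cap E\subset Q^E_{c_2r}(x_Q),\qquad Q\subset Q_{c_2r}(x_Q),
\]
with $c_1:=(\lambda-\kappa)/2>0$ (using $\lambda>\kappa$) and $c_2:=(\kappa+\lambda)/2<2$. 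The same centring gives $\bm\gamma(Q)\approx\bm\gamma(Q_{c_1r}(x_Q))\approx\bm\gamma(Q_{c_2r}(x_Q))$ by local doubling of $\bm\gamma$ at scales $\le 2$, which holds since $\gamma\in A_p^{\operatorname{loc}}$.

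The next step is to check that \eqref{eq.alm_reg} applies at both radii $c_1r$ and $c_2r$ around $x_Q$. We have $\operatorname{dist}(x_Q,S)\ge\operatorname{dist}(Q,S)-\kappa r\ge(\sigma-\kappa)r$, and the choice $\sigma>10(\kappa+\lambda+1)$ gives $c_2r\le\tfrac12\operatorname{dist}(x_Q,S)$. We also have $c_2r\le 2$; if needed, the scale restriction $r\le1$ in \eqref{eq.alm_reg} is handled by doubling on both sides, since all radii are bounded. Applying \eqref{eq.alm_reg} at radii $c_1r$ and $c_2r$ then yields
\[
\frac{\bm\gamma(Q_{c_1r}(x_Q))}{(c_1r)^\theta}\lesssim\bm{\bar\gamma}(\widehat Q)\lesssim\frac{\bm\gamma(Q_{c_2r}(x_Q))}{(c_2r)^\theta}.
\]
Combined with the doubling comparisons above, this shows that $\bm{\bar\gamma}(\widehat Q)$ and $\bm\gamma(Q)$ differ by the factor $r^{\theta}$, with $r=2^{-k}$. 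This is exactly the scaling recorded in \eqref{eq.alm_reg_dyad}. Read literally, however, \eqref{eq.alm_reg_dyad} asserts $\bm\gamma(Q)/\bm{\bar\gamma}(\widehat Q)\approx 2^{k\theta}$, whereas the chain gives $\bm{\bar\gamma}(\widehat Q)\approx 2^{k\theta}\bm\gamma(Q)$, i.e.\ $\bm\gamma(Q)/\bm{\bar\gamma}(\widehat Q)\approx 2^{-k\theta}$. The direction of the exponent in \eqref{eq.alm_reg_dyad} should therefore be checked; presumably it is a sign typo. All constants depend only on $\kappa,\lambda,\sigma,\theta$, the doubling constant and $C_1,C_2$.

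\emph{Main obstacle.} The delicate point is the geometry: making sure the lower inclusion $Q^E_{c_1r}(x_Q)\subset\widehat Q$ is genuinely nondegenerate, which needs $\lambda>\kappa$, and that the enlarged cube stays at the admissible distance from $S$, which is precisely why $\sigma$ is chosen large.
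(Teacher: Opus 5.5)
Your proposal is correct and follows essentially the same route as the paper: choose $x_Q\in\kappa Q\cap E$, sandwich $\widehat Q$ between relative cubes of radii $\frac{\lambda-\kappa}{2}2^{-k}$ and $\frac{\lambda+\kappa}{2}2^{-k}$ centred at $x_Q$, use the size of $\sigma$ to land in the admissible range of \eqref{eq.alm_reg}, and conclude with the local doubling of $\bm\gamma$. The one difference is harmless: you apply \eqref{eq.alm_reg} at both radii, whereas the paper invokes doubling of $\bm{\bar\gamma}$.

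You are also right about the exponent. The paper's own proof gives $\bm{\bar\gamma}(\widehat Q)\approx 2^{k\theta}\bm\gamma(Q)$, and the later applications (e.g.\ \eqref{eq.ext_lp9b}) use $\bm\gamma(Q)/\bm{\bar\gamma}(\widehat Q)\lesssim 2^{-k\theta}$. So \eqref{eq.alm_reg_dyad} should read $2^{-k\theta}$.
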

\begin{proof}
    The first assertion follows directly from the fact that the family $\{\Lambda\lambda Q:Q\in\mathcal D_k\}$
    has covering multiplicity at most \(([\Lambda\lambda]+2)^n\).
    \par
    We prove the second assertion. Let \(Q\in\mathscr D_k^r\). Choose
    \(x_Q\in\kappa Q\cap E\). Since \(Q\) belongs to the regular family and \(x_Q\in\kappa Q\), the choice of \(\sigma\) gives $\operatorname{dist}(x_Q,S) \ge c_\sigma 2^{-k}$,
    where \(c_\sigma> \lambda+\kappa+1\). In particular, all cubes below are sufficiently far from \(S\), so that the almost regularity condition applies.
    Since \(x_Q\in\kappa Q\), we have
    \begin{equation}
        Q^E_{\frac{\lambda-\kappa}{2}2^{-k}}(x_Q)
        \subset
        \widehat Q
        \subset
        Q^E_{\frac{\lambda+\kappa}{2}2^{-k}}(x_Q).
    \end{equation}
    By the almost regularity condition \eqref{eq.alm_reg} and the local doubling properties of \(\bm{\gamma}\) and \(\bm{\bar{\gamma}}\) away from \(S\), we obtain
    \begin{equation}
        \bm{\bar{\gamma}}(\widehat Q)
        \approx
        \bm{\bar{\gamma}}
        \left(
        Q^E_{\frac{\lambda+\kappa}{2}2^{-k}}(x_Q)
        \right)
        \approx
        \frac{
        \bm{\gamma}
        \left(
        Q_{\frac{\lambda+\kappa}{2}2^{-k}}(x_Q)
        \right)
        }
        {2^{-k\theta}}.
    \end{equation}
    Therefore, applying the local doubling property of \(\bm{\gamma}\), we obtain \eqref{eq.alm_reg_dyad}. The proof is complete.
\end{proof}
\subsection{Extension operator}
Let us recall that in the previous subsection we fixed the following parameters:
\begin{enumerate}
    \item the parameter \(\kappa\in(1,\frac32)\), which controls the tube of dyadic cubes touching \(E\);
    \item the parameter \(\eta\in(0,\kappa-1)\), which controls the supports of the smooth partition of unity;
    \item the parameters $\rho_{\pm}:=\frac{\kappa \pm (1+\eta)}{2}$,
    which describe the support of \(g_k\);
    \item the parameter \(\lambda\in(\kappa,2)\), which controls the dilation in the definition of \(\widehat Q\);
    \item the parameter \(\sigma>10(\kappa+\lambda+1)\), which controls the separation from the set of rapid singularities in the definition of regular cubes.
\end{enumerate}
For brevity, we put
\begin{equation}
    S:=RS_{p,\theta}(\gamma)\cap E,
    \qquad
    \rho_0:=\rho_S,
    \qquad
    c_\eta:=1+\eta.
\end{equation}
We also introduce several new parameters.
\par
First, choose a separation scale \(k_{\operatorname{sep}}\in\mathbb N\) such that
\begin{equation}
    2(\sigma+c_\eta+1)2^{-k_{\operatorname{sep}}}<\rho_0.
\end{equation}
Second, define the cutoff shift \(m_{\operatorname{cut}}\in\mathbb N\) by
\begin{equation}
    m_{\operatorname{cut}}
    :=
    \left[
    \log_2\frac{\rho_+}{\rho_-}
    \right]+1.
\end{equation}
The crucial role of this parameter is described by the following consequence of Lemma~\ref{Lm.lay}(3):
\begin{equation}
\label{eq.cut_off_shift}
    \operatorname{supp}g_{k+m_{\operatorname{cut}}}
    \subset
    \{x\in\mathbb R^n:g_k(x)=1\}
\end{equation}
for every \(k\in\mathbb Z\).
Finally, we set
\begin{equation}
    \xi:=2c_\eta+\lambda,
    \qquad
    \zeta_r:=\sigma-2c_\eta-\lambda,
    \qquad
    \zeta_s:=\sigma+2c_\eta+\lambda.
\end{equation}
These constants will be used to simplify notation in the combinatorial estimates below. Notice that, by the choice of \(\sigma\), we have $\zeta_r>0$ and $\frac{\xi}{\zeta_r}\le \frac14$.
\par
Throughout this section, all parameters introduced above are fixed. Therefore, any constant depending only on these parameters will be regarded as a structural constant. In particular, the constants denoted by \(C\), as well as the implicit constants in the notation \(\lesssim\) and \(\approx\), may depend on these parameters, but are independent of the function \(\phi\).
\par
We begin the construction of the extension operator with a simple observation. For every \(k\ge k_{\operatorname{sep}}\) and every
\(Q\in\mathscr D_k^s\), there is a unique point \(x_0\in S\) such that $\operatorname{dist}(Q,S)=\operatorname{dist}(Q,x_0)$.
Indeed, if two distinct points of \(S\) had the same property, their distance would be at most \((2\sigma+1)2^{-k}\), which is strictly smaller than \(\rho_0\), contradicting the definition of \(\rho_0\). We denote this point by \(x_0(Q)\).
\par
For every \(\phi\in\mathfrak L_p(E,{\bar{\gamma}},S)\) and every
\(Q\in\mathscr D_k\), \(k\ge k_{\operatorname{sep}}\), we set
\begin{equation}
    a_Q(\phi)
    :=
    \begin{cases}
    \displaystyle
    \fint\limits_{\widehat Q}\phi(x)\,d\bm{\bar{\gamma}}(x),
    & \text{if } Q\in\mathscr D_k^r,\\[3mm]
    \phi(x_0(Q)),
    & \text{if } Q\in\mathscr D_k^s.
    \end{cases}
\end{equation}
We now define \(\operatorname{Ext}\phi\) in several steps.
\par
\emph{Step 1.}
Since \(\frac{\xi}{\zeta_r}\le \frac14\), Proposition~\ref{Prop.cruc_prop} and the absolute continuity of the integral allow us to choose, for each
\(\phi\in\mathfrak L_p(E,{\bar{\gamma}},S)\), a strictly increasing sequence
    $\{k_l\}_{l=0}^{\infty}\subset\mathbb N$,
    $k_0=k_{\operatorname{sep}}$,
such that $k_{l+1}\ge k_l+m_{\operatorname{cut}}$ and, for every \(l\ge1\),
\begin{equation}
\label{eq.ext_def1}
    \left\|\delta_l\phi
    \right\|_{L_1(\mathcal{O}_l,{\bar{\gamma}})}
    \le
    2^{-lp}
    \|\phi\|_{\mathfrak L_p(E,{\bar{\gamma}},S)}^p
\end{equation}
and
\begin{equation}
\label{eq.ext_def1_tail}
    \sum_{x_0\in S}
    \|\phi-\phi(x_0)\|_{L_p(\mathcal{I}_l(x_0),{\bar{\gamma}})}^p
    \le
    2^{-lp}
    \|\phi\|_{\mathfrak L_p(E,{\bar{\gamma}},S)}^p,
\end{equation}
where, for brevity, we set
\begin{equation}
    \delta_l\phi:=
    \delta_{\xi2^{-k_l}}^{E,p}\phi, \qquad \mathcal{O}_l:=E\setminus Q^E_{\zeta_r2^{-k_l}}(S), \qquad \mathcal{I}_l(x_0):=Q^E_{\zeta_s2^{-k_l}}(x_0).
\end{equation}
The dependence of the sequence \(\{k_l\}\) on \(\phi\) is the source of the nonlinearity of the extension operator.
\par
\emph{Step 2.}
For each \(l\ge0\), we set
\begin{equation}
\label{eq.ext_def2}
    \mathcal E_l\phi(x)
    :=
    \sum_{Q\in\mathscr D_{k_l}}
    a_Q(\phi)\psi_Q(x),
    \qquad
    \mathcal E_{-1}\phi(x):=0.
\end{equation}
\par
\emph{Step 3.}
We define an extension of \(\phi\) by
\begin{equation}
\label{eq.def_ext}
    \operatorname{Ext}\phi(x)
    :=
    \sum_{l=0}^{\infty}
    \left(
    \mathcal E_l\phi(x)-\mathcal E_{l-1}\phi(x)
    \right)
    \widetilde g_l(x),
    \qquad
    \widetilde g_l:=g_{k_l+m_{\operatorname{cut}}}.
\end{equation}
For \(x\in\mathbb R^n\setminus E\), the series in \eqref{eq.def_ext} contains only finitely many nonzero terms. On \(E\), we may define \(\operatorname{Ext}\phi\) arbitrarily, since \(d<n\) and hence \(\mathcal L^n(E)=0\). For brevity, we use the notation
\begin{equation}
\widetilde{\mathcal D}_l(\phi)
    :=
   \mathscr D_{k_l}, \qquad
    \widetilde{\mathcal D}_l^r(\phi)
    :=
     \mathscr D_{k_l}^r,
    \qquad
    \widetilde{\mathcal D}_l^s(\phi)
    :=
    \mathscr D_{k_l}^s.
\end{equation}
\par
First, we estimate the norm of each term in \eqref{eq.def_ext} separately. We remark that the terms with \(l=0\) and \(l>0\) have to be treated differently because of the definition of the extension.
\begin{Lm}
\label{Lm.zero_term}
    There is a constant \(C>0\) such that, for every
    \(\phi\in\mathfrak L_p(E,{\bar{\gamma}},S)\),
    \begin{equation}
        \label{eq.zero_term_norm}
        \|\mathcal E_0\phi\cdot\widetilde g_0\|_{L_p(\mathbb R^n,\gamma)}
        \le
        C\|\phi\|_{\mathfrak L_p(E,{\bar{\gamma}},S)}
    \end{equation}
    and
    \begin{equation}
        \label{eq.zero_term_grad_norm}
        \left\|
        \left|\nabla(\mathcal E_0\phi\cdot\widetilde g_0)\right|
        \right\|_{L_p(\mathbb R^n,\gamma)}
        \le
        C\|\phi\|_{\mathfrak L_p(E,{\bar{\gamma}},S)}.
    \end{equation}
\end{Lm}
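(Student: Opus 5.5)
The plan is to estimate $\mathcal E_0\phi\cdot\widetilde g_0$ directly from its definition. It is a finite-overlap sum of bumps at the fixed scale $2^{-k_0}$, $k_0=k_{\operatorname{sep}}$, and $k_{\operatorname{sep}}$ depends only on structural parameters. So every power of $2^{k_0}$ that appears is a structural constant.

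Since $0\le\widetilde g_0\le 1$ and $|\nabla\widetilde g_0|\lesssim 2^{k_0+m_{\operatorname{cut}}}\lesssim 1$ by Lemma~\ref{Lm.lay}, and $|\nabla\psi_Q|\lesssim 2^{k_0}$, we get pointwise
\[
|\mathcal E_0\phi\cdot\widetilde g_0|+|\nabla(\mathcal E_0\phi\cdot\widetilde g_0)|
\lesssim \sum_{Q\in\mathscr D_{k_0}}|a_Q(\phi)|\,\chi_{c_\eta Q}.
\]
The cubes $c_\eta Q$ have bounded overlap, so both left-hand norms are dominated by
\[
\Bigl(\sum_{Q\in\mathscr D_{k_0}}|a_Q(\phi)|^p\,\bm\gamma(c_\eta Q)\Bigr)^{1/p}.
\]
By local doubling, $\bm\gamma(c_\eta Q)\approx\bm\gamma(Q)$. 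It therefore suffices to prove
\[
\sum_{Q\in\mathscr D_{k_0}}|a_Q(\phi)|^p\bm\gamma(Q)\lesssim\|\phi\|^p_{\mathfrak L_p(E,\bar\gamma,S)}.
\]

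\emph{Regular cubes.} For $Q\in\mathscr D^r_{k_0}$, Jensen gives $|a_Q|^p\le\fint_{\widehat Q}|\phi|^p\,d\bm{\bar\gamma}$. Lemma~\ref{Lm.ext_f_pr}(2) gives $\bm\gamma(Q)\approx 2^{k_0\theta}\bm{\bar\gamma}(\widehat Q)$. Hence
\[
|a_Q|^p\bm\gamma(Q)\lesssim\int_{\widehat Q}|\phi|^p\,d\bm{\bar\gamma}.
\]
By the bounded multiplicity in Lemma~\ref{Lm.ext_f_pr}(1), summing over regular cubes gives at most a constant times $\int_{\widehat{\mathcal R}}|\phi|^p\,d\bm{\bar\gamma}$, where $\widehat{\mathcal R}$ is the union of the patches $\widehat Q$ over regular $Q$. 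These patches lie at distance $\gtrsim\sigma2^{-k_0}$ from $S$. On $E\setminus Q^E_{\rho_0}(S)$ the integral is bounded by the first term of the norm. On $Q^E_{\rho_0}(x_0)$ we write $\phi=(\phi-\phi(x_0))+\phi(x_0)$. The first piece is controlled by the $L_p(Q^E_{\rho_0}(x_0),\bar\gamma)$-term. For the second piece we use that $\bm{\bar\gamma}$ of the region $Q^E_{\rho_0}(x_0)\setminus Q^E_{c2^{-k_0}}(x_0)$ is finite and structural, by Lemma~\ref{Lm.alm_reg_bas_pr}(1) (or the annulus bound \eqref{eq.meas_an} summed over finitely many $k$). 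This bounds it by $|\phi(x_0)|^p$.

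\emph{Singular cubes.} For $Q\in\mathscr D^s_{k_0}$ we have $a_Q=\phi(x_0(Q))$. These cubes lie within distance $(\sigma+1)2^{-k_0}\le\rho_0$ of $S$, and there are at most finitely many of them per point of $S$, with a structural count. Each satisfies $\bm\gamma(Q)\le\bm\gamma(Q_2(x_0))<\infty$. So their contribution is $\lesssim\sum_{x_0}|\phi(x_0)|^p$.

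\emph{Main obstacle.} The delicate point is the regular-cube sum near $S$: $\bar\gamma$ is not integrable at $x_0$, so $\phi$ itself is not in $L_p$ there. What saves us is that at the fixed scale $k_0$ the regular patches stay a structural distance away from $S$, where $\bar\gamma$ has finite structural mass. A related point is that local integrability alone gives finiteness but not a uniform constant. The uniform bound should come from the almost-regularity estimate \eqref{eq.alm_reg} applied to finitely many cubes of size $\approx 2^{-k_0}$ covering that annulus, together with the local finiteness of $\bm\gamma$.
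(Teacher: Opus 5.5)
Your proposal is correct and follows essentially the same route as the paper. Bounded overlap reduces both norms to $\sum_{Q\in\mathscr D_{k_{\operatorname{sep}}}}|a_Q(\phi)|^p\bm\gamma(c_\eta Q)$. Regular cubes are handled by Jensen, Lemma~\ref{Lm.ext_f_pr}, and the fact that their patches stay a fixed distance from $S$; your splitting $\phi=(\phi-\phi(x_0))+\phi(x_0)$ is exactly the argument of Remark~\ref{Rm.ren_sp} that the paper cites. Singular cubes are handled by $a_Q=\phi(x_0(Q))$ and local integrability of $\gamma$.

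Two small remarks. The correct comparison is $\bm\gamma(Q)\approx 2^{-k_{\operatorname{sep}}\theta}\bm{\bar\gamma}(\widehat Q)$, since the sign of the exponent in the statement of Lemma~\ref{Lm.ext_f_pr}(2) is a misprint; this is harmless here because the scale is fixed. Your worry about uniformity is unnecessary: $C$ only has to be independent of $\phi$, so mere finiteness of $\bm{\bar\gamma}$ on the annulus and of $\bm\gamma(Q_2(x_0))$ suffices, exactly as the paper uses it.
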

\begin{proof}
Recall that \(k_0=k_{\operatorname{sep}}\) in the construction of the sequence
\(\{k_l\}_{l=0}^{\infty}\). Thus $\widetilde{\mathcal D}_0(\phi)= \mathscr D_{k_{\operatorname{sep}}}$. Since the family \(\{c_\eta Q:Q\in\mathcal D_{k_{\operatorname{sep}}}\}\) has uniformly bounded multiplicity, we have
\begin{equation}
\label{eq.ext_lp1}
    \|\mathcal E_0\phi\cdot\widetilde g_0\|_{L_p(\mathbb R^n,\gamma)}^p
    \lesssim
    \sum_{Q\in\widetilde{\mathcal D}_0(\phi)}
    |a_Q(\phi)|^p\,\bm{\gamma}(c_\eta Q).
\end{equation}
\par
We first estimate the contribution of regular cubes. If
\(Q\in\widetilde{\mathcal D}_0^r(\phi)\), then, by Jensen's inequality,
\begin{equation}
\label{eq.ext_lp2}
    |a_Q(\phi)|^p
    \le
    \fint\limits_{\widehat Q}
    |\phi(x)|^p\,d\bm{\bar{\gamma}}(x).
\end{equation}
Using the local doubling property of \(\bm{\gamma}\) and Lemma~\ref{Lm.ext_f_pr}, we obtain
\begin{equation}
\label{eq.ext_lp3}
\begin{split}
    \sum_{Q\in\widetilde{\mathcal D}_0^r(\phi)}
    |a_Q(\phi)|^p\,\bm{\gamma}(c_\eta Q)
    &\lesssim
    \sum_{Q\in\widetilde{\mathcal D}_0^r(\phi)}
    \frac{\bm{\gamma}(Q)}
    {\bm{\bar{\gamma}}(\widehat Q)}
    \int\limits_{\widehat Q}
    |\phi(x)|^p\,d\bm{\bar{\gamma}}(x)
    \lesssim
    \int\limits_{G_{k_{\operatorname{sep}}}}
    |\phi(x)|^p\,d\bm{\bar{\gamma}}(x),
\end{split}
\end{equation}
where
\begin{equation}
\label{eq.ext_lp4}
    G_k
    :=
    \bigcup_{Q\in\mathscr D_k^r}
    \widehat Q.
\end{equation}
The set \(G_{k_{\operatorname{sep}}}\) stays a positive distance away from \(S\). Therefore, by the definition of
\(\mathfrak L_p(E,{\bar{\gamma}},S)\) and by Remark~\ref{Rm.ren_sp}, we have
\begin{equation}
\label{eq.ext_lp4b}
    \|\phi\|_{L_p(G_{k_{\operatorname{sep}}},{\bar{\gamma}})}
    \lesssim
    \|\phi\|_{\mathfrak L_p(E,{\bar{\gamma}},S)}.
\end{equation}

We now estimate the contribution of singular cubes. If
\(Q\in\widetilde{\mathcal D}_0^s(\phi)\), then $a_Q(\phi)=\phi(x_0(Q))$.
Moreover,
\begin{equation}
    \bigcup_{Q\in\widetilde{\mathcal D}_0^s(\phi)} c_\eta Q
    \subset
    Q_{C2^{-k_{\operatorname{sep}}}}(S)
\end{equation}
for some constant \(C>0\) depending only on the fixed parameters. Since
\(k_{\operatorname{sep}}\) is fixed and \(S\) is finite, the local integrability of
\(\gamma\) gives
\begin{equation}
\label{eq.ext_lp5}
\begin{split}
    \sum_{Q\in\widetilde{\mathcal D}_0^s(\phi)}
    |a_Q(\phi)|^p\,&\bm{\gamma}(c_\eta Q)
    \lesssim
    \sum_{x_0\in S}
    |\phi(x_0)|^p
    \bm{\gamma}\bigl(Q_{C2^{-k_{\operatorname{sep}}}}(x_0)\bigr)
    \lesssim
    \|\phi\|_{\mathfrak{L}_p(E,{\bar{\gamma}},S)}^p.
\end{split}
\end{equation}
Combining \eqref{eq.ext_lp1}, \eqref{eq.ext_lp3}, \eqref{eq.ext_lp4b}, and
\eqref{eq.ext_lp5}, we obtain
\begin{equation}
\label{eq.ext_lp6}
    \|\mathcal E_0\phi\cdot\widetilde g_0\|_{L_p(\mathbb R^n,\gamma)}
    \lesssim
    \|\phi\|_{\mathfrak L_p(E,{\bar{\gamma}},S)}.
\end{equation}
\par
It remains to prove the gradient estimate. By Lemma~\ref{Lm.lay}(1) and (2) applied with $k=k_{\operatorname{sep}}$,
\begin{equation}
\label{eq.ext_grad_0}
    \left\|
    \left|\nabla(\mathcal E_0\phi\cdot\widetilde g_0)\right|
    \right\|_{L_p(\mathbb R^n,\gamma)}^p
    \lesssim
    \sum_{Q\in\widetilde{\mathcal D}_0(\phi)}
    |a_Q(\phi)|^p\,\bm{\gamma}(c_\eta Q).
\end{equation}
Therefore, the same estimates as above applied to the right-hand side of \eqref{eq.ext_grad_0} give \eqref{eq.zero_term_grad_norm}.
The proof is complete.
\end{proof}
Next, we estimate the terms with \(l\ge1\). To this end, given
\(\phi\in\mathfrak L_p(E,{\bar{\gamma}},S)\), we define special families of interacting cubes. For \(l\ge1\), set
\begin{equation}
\label{eq.ext_lp8}
\begin{split}
\mathcal F_l^{r,r}
&:=
\{(Q_1,Q_2)\in
\widetilde{\mathcal D}_l^r(\phi)\times
\widetilde{\mathcal D}_{l-1}^r(\phi):
c_\eta Q_1\cap c_\eta Q_2\ne\emptyset\},
\\
\mathcal F_l^{s,r}
&:=
\{(Q_1,Q_2)\in
\widetilde{\mathcal D}_l^s(\phi)\times
\widetilde{\mathcal D}_{l-1}^r(\phi):
c_\eta Q_1\cap c_\eta Q_2\ne\emptyset\},
\\
\mathcal F_l^{r,s}
&:=
\{(Q_1,Q_2)\in
\widetilde{\mathcal D}_l^r(\phi)\times
\widetilde{\mathcal D}_{l-1}^s(\phi):
c_\eta Q_1\cap c_\eta Q_2\ne\emptyset\},
\\
\mathcal F_l^{s,s}
&:=
\{(Q_1,Q_2)\in
\widetilde{\mathcal D}_l^s(\phi)\times
\widetilde{\mathcal D}_{l-1}^s(\phi):
c_\eta Q_1\cap c_\eta Q_2\ne\emptyset\}.
\end{split}
\end{equation}
These families describe different interacting pairs of regular and singular cubes from two consecutive selected layers.
\begin{Lm}
\label{Lm.higher_terms}
    There is a constant \(C>0\) such that, for every
    \(\phi\in\mathfrak L_p(E,{\bar{\gamma}},S)\) and every \(l\ge1\),
    \begin{equation}
        \label{eq.higher_term_norm}
        \left\|
        \left(\mathcal E_l\phi-\mathcal E_{l-1}\phi\right)\widetilde g_l
        \right\|_{L_p(\mathbb R^n,\gamma)}
        \le
        C
        2^{-k_l\frac{\theta}{p}}
        2^{-l}
        \|\phi\|_{\mathfrak L_p(E,{\bar{\gamma}},S)}
    \end{equation}
    and
    \begin{equation}
        \label{eq.higher_term_grad_norm}
        \left\|
        \left|
        \nabla\left(
        \left(\mathcal E_l\phi-\mathcal E_{l-1}\phi\right)\widetilde g_l
        \right)
        \right|
        \right\|_{L_p(\mathbb R^n,\gamma)}
        \le
        C
        2^{k_l\left(1-\frac{\theta}{p}\right)}
        2^{-l}
        \|\phi\|_{\mathfrak L_p(E,{\bar{\gamma}},S)}.
    \end{equation}
\end{Lm}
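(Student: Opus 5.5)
The plan is to write, on $\operatorname{supp}\widetilde g_l$,
\begin{equation*}
(\mathcal E_l\phi-\mathcal E_{l-1}\phi)(x)=\sum_{Q_1\in\widetilde{\mathcal D}_l(\phi)}\sum_{Q_2\in\widetilde{\mathcal D}_{l-1}(\phi)}\bigl(a_{Q_1}(\phi)-a_{Q_2}(\phi)\bigr)\psi_{Q_1}(x)\psi_{Q_2}(x).
\end{equation*}
This identity uses that $\{\psi_Q\}_{Q\in\mathcal D_k}$ is a partition of unity and that, by \eqref{eq.cut_off_shift}, $g_{k_{l-1}}=1$ and $g_{k_l}=1$ on $\operatorname{supp}\widetilde g_l$. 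Therefore only pairs from $\mathcal F_l^{r,r}\cup\mathcal F_l^{s,r}\cup\mathcal F_l^{r,s}\cup\mathcal F_l^{s,s}$ contribute. By bounded overlap, the $L_p(\gamma)$-norm to the power $p$ is then bounded by $\sum |a_{Q_1}-a_{Q_2}|^p\bm\gamma(c_\eta Q_1)$ over interacting pairs. For the gradient, differentiate the product $\psi_{Q_1}\psi_{Q_2}\widetilde g_l$. By Lemma~\ref{Lm.lay}, each derivative costs at most $C2^{k_l}$, since $k_{l-1}<k_l$ and $\widetilde g_l$ lives at scale $2^{-k_l-m_{\operatorname{cut}}}$. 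So \eqref{eq.higher_term_grad_norm} follows from \eqref{eq.higher_term_norm} once the coefficient-difference sum is bounded by $C2^{-k_l\theta}2^{-lp}\|\phi\|^p$.

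For a pair $(Q_1,Q_2)$ the cube $Q_1$ has side $2^{-k_l}$. For $\mathcal F_l^{r,r}$ I would bound the difference as
\begin{equation*}
|a_{Q_1}-a_{Q_2}|^p\le\fint_{\widehat Q_1}\fint_{\widehat Q_2}|\phi(x)-\phi(y)|^p\,d\bm{\bar\gamma}(y)\,d\bm{\bar\gamma}(x).
\end{equation*}
For $x\in\widehat Q_1$, the patch $\widehat Q_2$ sits inside $Q^E_{\xi2^{-k_{l-1}}}(x)$ and has comparable $\bm{\bar\gamma}$-measure by doubling away from $S$ (Lemma~\ref{Lm.alm_reg_bas_pr}). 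The inner average is therefore $\lesssim\delta_{l-1}\phi(x)$, at scale $\xi2^{-k_{l-1}}$. Lemma~\ref{Lm.ext_f_pr} gives $\bm\gamma(c_\eta Q_1)\approx2^{-k_l\theta}\bm{\bar\gamma}(\widehat Q_1)$, and the bounded multiplicity of the patches then yields $\lesssim 2^{-k_l\theta}\|\delta_{l-1}\phi\|_{L_1(\mathcal O_{l-1},\bar\gamma)}$. One must check that regular $Q_1$ keeps $\widehat Q_1$ inside $\mathcal O_{l-1}$; if the distance bookkeeping fails at level $l-1$, I would shift the index and use $\delta_l$ instead. By \eqref{eq.ext_def1} this is at most $2^{-k_l\theta}2^{-(l-1)p}\|\phi\|^p$, which is fine up to a constant.

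The singular cases remain. For $\mathcal F_l^{s,s}$, both coefficients equal $\phi(x_0)$ for the same point; uniqueness holds since $k\ge k_{\operatorname{sep}}$ and the cubes are close. So the term vanishes. For mixed pairs, $a_{Q_{\mathrm{reg}}}-\phi(x_0)=\fint_{\widehat Q}(\phi-\phi(x_0))\,d\bm{\bar\gamma}$, and Jensen gives $\lesssim 2^{-k_l\theta}\int_{\widehat Q}|\phi-\phi(x_0)|^p\,d\bm{\bar\gamma}$ after using the weight ratio of Lemma~\ref{Lm.ext_f_pr} for the regular cube. In the case $\mathcal F_l^{r,s}$ the singular cube is the coarse one, so one needs $\bm\gamma(c_\eta Q_1)\lesssim 2^{-k_l\theta}\bm{\bar\gamma}(\widehat Q_1)$ with $Q_1$ regular, which holds. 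The regular patches involved lie within distance $\zeta_s2^{-k_{l-1}}$ of $x_0$, i.e.\ in $\mathcal I_{l-1}(x_0)$, and \eqref{eq.ext_def1_tail} controls them by $2^{-(l-1)p}\|\phi\|^p$.

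The main obstacle is the case $\mathcal F_l^{s,r}$, where the fine cube $Q_1$ is singular and the coarse $Q_2$ is regular. Here $\bm\gamma(c_\eta Q_1)$ must be compared with $2^{-k_l\theta}\bm{\bar\gamma}(\widehat Q_2)$. $Q_1$ lies near $x_0$, where almost regularity fails, but $Q_2$ is at distance $\gtrsim\sigma2^{-k_{l-1}}$ from $x_0$, and $Q_1\subset c\,Q_2$-neighbourhood. So I would use local doubling of $\bm\gamma$ to get $\bm\gamma(c_\eta Q_1)\lesssim\bm\gamma(Q_2)\approx2^{-k_{l-1}\theta}\bm{\bar\gamma}(\widehat Q_2)\le 2^{-k_l\theta}\cdots$? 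This inequality goes the wrong way, since $k_l>k_{l-1}$. The remedy I would try is to avoid using the coarse patch. Instead, bound the contribution by the mass of $\bm\gamma$ over the few fine cubes near $Q_2$, and compare $|\phi(x_0)-a_{Q_2}|$ with the fine-scale regular averages over neighbouring regular fine cubes $Q_1'$. This splits the difference into an $r,r$ term and an $r,s$-type term at scale $k_l$, both already controlled. Counting the cubes with a bounded-multiplicity argument completes \eqref{eq.higher_term_norm}.
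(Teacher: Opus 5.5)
Your pairwise identity on $\operatorname{supp}\widetilde g_l$, the reduction of the gradient bound to the coefficient sum, and your treatment of $\mathcal F_l^{r,r}$, $\mathcal F_l^{r,s}$ and $\mathcal F_l^{s,s}$ coincide with the paper's proof. The gap is the family $\mathcal F_l^{s,r}$ (fine singular $Q_1$, coarse regular $Q_2$). You call it the main obstacle but offer only a tentative remedy. That remedy needs several things you do not verify:
\begin{itemize}
\item regular fine cubes $Q_1'\in\mathscr D^r_{k_l}$ near $Q_1$ whose patches lie in $\xi\widehat Q_2$, so that $|a_{Q_1'}-a_{Q_2}|$ is controlled by $\delta_{l-1}\phi$;
\item the same patches lying near $x_0$, so that $|a_{Q_1'}-\phi(x_0)|$ is controlled by the tail condition;
\item a bounded-multiplicity assignment $Q_1\mapsto Q_1'$ and a comparison $\bm{\gamma}(c_\eta Q_1)\lesssim\bm{\gamma}(Q_1')$.
\end{itemize}
The missing idea is that this family is empty, which is how the paper disposes of it. If $Q_1\in\mathscr D^s_{k_l}$, $Q_2\in\mathscr D_{k_{l-1}}$ and $c_\eta Q_1\cap c_\eta Q_2\neq\emptyset$, then
\[
\operatorname{dist}(Q_2,S)\le\operatorname{dist}(Q_2,Q_1)+\operatorname{diam}Q_1+\operatorname{dist}(Q_1,S)\le 2c_\eta2^{-k_{l-1}}+(\sigma+1)2^{-k_l}\le\Bigl(2c_\eta+\frac{\sigma+1}{2}\Bigr)2^{-k_{l-1}}<\sigma2^{-k_{l-1}}.
\]
This uses $k_l\ge k_{l-1}+m_{\operatorname{cut}}\ge k_{l-1}+1$ and $\sigma>10(\kappa+\lambda+1)>4c_\eta+1$. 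Hence $Q_2$ is singular, so $\mathcal F_l^{s,r}=\emptyset$.

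Your own description already contains this contradiction. You note that $Q_1$ lies within $\sigma2^{-k_l}$ of $x_0$ and within $O(2^{-k_{l-1}})$ of $Q_2$, while $Q_2$ is at distance at least $\sigma2^{-k_{l-1}}$ from $x_0$. Because the regular/singular threshold $\sigma2^{-k}$ scales with the level, every coarse neighbour of a fine singular cube is singular. So the only mixed pairs are fine-regular/coarse-singular ones, i.e.\ $\mathcal F_l^{r,s}$; this is why the tail condition \eqref{eq.ext_def1_tail} is imposed on the coarse-scale sets $\mathcal I_{l-1}(x_0)$. The mismatch $2^{-k_{l-1}\theta}$ versus $2^{-k_l\theta}$ that worried you never arises, and with this observation your argument is complete.

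A smaller point in $\mathcal F_l^{r,r}$: the inclusion $\xi\widehat Q_2\subset\mathcal O_{l-1}$ and the doubling of $\bm{\bar\gamma}$ at scale $\xi2^{-k_{l-1}}$ come from the regularity of the coarse cube $Q_2$ (via $\zeta_r=\sigma-\xi$ and $\xi/\zeta_r\le\frac14$), not of $Q_1$. Your fallback to $\delta_l$ would not work. Its scale $\xi2^{-k_l}$ can be arbitrarily small compared with the patch $\widehat Q_2$, because $k_l-k_{l-1}$ depends on $\phi$.
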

\begin{proof}
\emph{Step 1}.
Fix \(\phi\in\mathfrak L_p(E,{\bar{\gamma}},S)\) and \(l\ge1\). By the choice of
\(m_{\operatorname{cut}}\)  (see \eqref{eq.cut_off_shift}) we have $\operatorname{supp}\widetilde g_l \subset \{x\in\mathbb R^n:g_{k_l}(x)=1\}$. Moreover, since \(k_l\ge k_{l-1}+m_{cut}\), the same inclusion also gives $\operatorname{supp}\widetilde g_l
    \subset
    \{x\in\mathbb R^n:g_{k_{l-1}}(x)=1\}$.
Hence, on the support of \(\widetilde g_l\),
\begin{equation}
    \sum_{Q_1\in\widetilde{\mathcal D}_l(\phi)}\psi_{Q_1}(x)=1,
    \qquad
    \sum_{Q_2\in\widetilde{\mathcal D}_{l-1}(\phi)}\psi_{Q_2}(x)=1.
\end{equation}
Therefore, for all \(x\in\mathbb R^n\),
\begin{equation}
\label{eq.ext_lp7}
\begin{split}
    &\left(\mathcal E_l\phi(x)-\mathcal E_{l-1}\phi(x)\right)\widetilde g_l(x)
    =
    \sum_{\substack{Q_1\in\widetilde{\mathcal D}_l(\phi)\\
                    Q_2\in\widetilde{\mathcal D}_{l-1}(\phi)}}
    \left(a_{Q_1}(\phi)-a_{Q_2}(\phi)\right)
    \psi_{Q_1}(x)\psi_{Q_2}(x)\widetilde g_l(x).
\end{split}
\end{equation}
Only pairs with \(c_\eta Q_1\cap c_\eta Q_2\ne\emptyset\) contribute to this sum. Since the families
\(\{c_\eta Q:Q\in\mathcal D_k\}\) have uniformly bounded multiplicity, we obtain
\begin{equation}
\label{eq.ext_lp7n}
\begin{split}
    &\left\|
    \left(\mathcal E_l\phi-\mathcal E_{l-1}\phi\right)\widetilde g_l
    \right\|_{L_p(\mathbb R^n,\gamma)}^p\lesssim
    \sum_{\substack{Q_1\in\widetilde{\mathcal D}_l(\phi)\\
                    Q_2\in\widetilde{\mathcal D}_{l-1}(\phi)}}
    |a_{Q_1}(\phi)-a_{Q_2}(\phi)|^p
    \bm{\gamma}(c_\eta Q_1\cap c_\eta Q_2).
\end{split}
\end{equation}
We split the last sum according to the four families
\(\mathcal F_l^{r,r}\), \(\mathcal F_l^{r,s}\), \(\mathcal F_l^{s,r}\), and
\(\mathcal F_l^{s,s}\).
\par 
\emph{Step 2.}
Let \((Q_1,Q_2)\in\mathcal F_l^{r,r}\). By Jensen's inequality,
\begin{equation}
\label{eq.ext_lp9}
    |a_{Q_1}(\phi)-a_{Q_2}(\phi)|^p
    \le
    \fint\limits_{\widehat Q_1}
    \fint\limits_{\widehat Q_2}
    |\phi(x)-\phi(y)|^p
    \,d\bm{\bar{\gamma}}(y)\,d\bm{\bar{\gamma}}(x).
\end{equation}
Since \(Q_1\in\widetilde{\mathcal D}_l^r(\phi)\), Lemma~\ref{Lm.ext_f_pr}(2) and the local doubling property of \(\bm{\gamma}\) give
\begin{equation}
\label{eq.ext_lp9b}
    \frac{\bm{\gamma}(c_\eta Q_1\cap c_\eta Q_2)}
    {\bm{\bar{\gamma}}(\widehat Q_1)}
    \lesssim
    \frac{\bm{\gamma}(Q_1)}
    {\bm{\bar{\gamma}}(\widehat Q_1)}
    \lesssim
    2^{-k_l\theta}.
\end{equation}
Therefore,
\begin{equation}
\label{eq.ext_lp10}
\begin{split}
&\sum_{(Q_1,Q_2)\in\mathcal F_l^{r,r}}
|a_{Q_1}(\phi)-a_{Q_2}(\phi)|^p
\bm{\gamma}(c_\eta Q_1\cap c_\eta Q_2)
\\
&\quad\lesssim
2^{-k_l\theta}
\sum_{(Q_1,Q_2)\in\mathcal F_l^{r,r}}
\int\limits_{\widehat Q_1}
\fint\limits_{\widehat Q_2}
|\phi(x)-\phi(y)|^p
\,d\bm{\bar{\gamma}}(y)\,d\bm{\bar{\gamma}}(x).
\end{split}
\end{equation}
\par
For each fixed \(Q_2\in\widetilde{\mathcal D}_{l-1}^r(\phi)\), the interacting patches satisfy
\begin{equation}
\label{eq.adj_c}
    \bigcup_{\substack{Q_1\in\widetilde{\mathcal D}_l(\phi):\\
    c_\eta Q_1\cap c_\eta Q_2\ne\emptyset}}
    \widehat Q_1
    \subset
    \xi\widehat Q_2.
\end{equation}
Moreover, for every \(x\in\xi\widehat Q_2\), we have $\widehat Q_2
    \subset
    Q_{\xi2^{-k_{l-1}}}^E(x)$.
Since \(Q_2\) is regular, the local doubling property of \(\bm{\bar{\gamma}}\) away from \(S\) gives
    $\bm{\bar{\gamma}}(\widehat Q_2)
    \approx
    \bm{\bar{\gamma}}(Q_{\xi2^{-k_{l-1}}}^E(x))$.
Consequently, using \eqref{eq.adj_c} and the bounded multiplicity of the family of patches, we obtain
\begin{equation}
\label{eq.rr_local}
\begin{split}
&\sum_{\substack{Q_1\in\widetilde{\mathcal D}_l(\phi):\\
c_\eta Q_1\cap c_\eta Q_2\ne\emptyset}}
\int\limits_{\widehat Q_1}
\fint\limits_{\widehat Q_2}
|\phi(x)-\phi(y)|^p
\,d\bm{\bar{\gamma}}(y)\,d\bm{\bar{\gamma}}(x)\lesssim
\int\limits_{\xi\widehat Q_2}
\delta_{l-1}\phi(x)
\,d\bm{\bar{\gamma}}(x).
\end{split}
\end{equation}
Summing over \(Q_2\in\widetilde{\mathcal D}_{l-1}^r(\phi)\) and using the bounded multiplicity once more, we get
\begin{equation}
\label{eq.rr_sum}
\begin{split}
&\sum_{(Q_1,Q_2)\in\mathcal F_l^{r,r}}
|a_{Q_1}(\phi)-a_{Q_2}(\phi)|^p
\bm{\gamma}(c_\eta Q_1\cap c_\eta Q_2)
\lesssim
2^{-k_l\theta}
\left\|
\delta_{l-1}\phi
\right\|_{L_1(U_{l-1},\bar{\gamma})},
\end{split}
\end{equation}
where
\begin{equation}
    U_{l-1}
    :=
    \bigcup_{Q_2\in\widetilde{\mathcal D}^r_{l-1}(\phi)}
    \xi\widehat Q_2.
\end{equation}
By the definitions of regular cubes and of \(\mathcal{O}_l\), we have
    $U_{l-1}
    \subset
    \mathcal{O}_{l-1}$.
Hence, by the choice of the sequence \(\{k_l\}\) (see \eqref{eq.ext_def1}),
\begin{equation}
\label{eq.ext_lp_rr}
\begin{split}
&\sum_{(Q_1,Q_2)\in\mathcal F_l^{r,r}}
|a_{Q_1}(\phi)-a_{Q_2}(\phi)|^p
\bm{\gamma}(c_\eta Q_1\cap c_\eta Q_2)\lesssim
2^{-k_l\theta}2^{-lp}
\|\phi\|_{\mathfrak L_p(E,\bm{\bar{\gamma}},S)}^p.
\end{split}
\end{equation}
\par
\emph{Step 3.}
Let \((Q_1,Q_2)\in\mathcal F_l^{r,s}\). Then
\(a_{Q_2}(\phi)=\phi(x_0(Q_2))\), and by Jensen's inequality,
\begin{equation}
\label{eq.ext_lp11}
    |a_{Q_1}(\phi)-a_{Q_2}(\phi)|^p
    \le
    \fint\limits_{\widehat Q_1}
    |\phi(x)-\phi(x_0(Q_2))|^p\,d\bm{\bar{\gamma}}(x).
\end{equation}
Using Lemma~\ref{Lm.ext_f_pr}(2) and the local doubling property of
\(\bm{\gamma}\), we get
\begin{equation}
\label{eq.ext_lp12}
\begin{split}
&\sum_{(Q_1,Q_2)\in\mathcal F_l^{r,s}}
|a_{Q_1}(\phi)-a_{Q_2}(\phi)|^p
\bm{\gamma}(c_\eta Q_1\cap c_\eta Q_2)
\\
&\qquad\lesssim
2^{-k_l\theta}
\sum_{Q_2\in\widetilde{\mathcal D}_{l-1}^s(\phi)}
\sum_{\substack{Q_1\in\widetilde{\mathcal D}_l^r(\phi):\\
c_\eta Q_1\cap c_\eta Q_2\ne\emptyset}}
\int\limits_{\widehat Q_1}
|\phi(x)-\phi(x_0(Q_2))|^p\,d\bm{\bar{\gamma}}(x).
\end{split}
\end{equation}
For fixed \(x_0\in S\), the interacting regular patches $\widehat Q_1$ corresponding to singular cubes \(Q_2\) with \(x_0(Q_2)=x_0\) are contained in $\mathcal{I}_{l-1}(x_0)$.
Therefore, by bounded overlap of patches,
\begin{equation}
\label{eq.ext_lp_rs}
\begin{split}
&\sum_{(Q_1,Q_2)\in\mathcal F_l^{r,s}}
|a_{Q_1}(\phi)-a_{Q_2}(\phi)|^p
\bm{\gamma}(c_\eta Q_1\cap c_\eta Q_2)
\lesssim
2^{-k_l\theta}
\sum_{x_0\in S}
\|\phi-\phi(x_0)\|_{L_p(\mathcal{I}_{l-1}(x_0),{\bar{\gamma}})}^p.
\end{split}
\end{equation}
By the additional choice condition \eqref{eq.ext_def1_tail}, this gives
\begin{equation}
\label{eq.ext_lp_rs_final}
\begin{split}
&\sum_{(Q_1,Q_2)\in\mathcal F_l^{r,s}}
|a_{Q_1}(\phi)-a_{Q_2}(\phi)|^p
\bm{\gamma}(c_\eta Q_1\cap c_\eta Q_2)
\lesssim
2^{-k_l\theta}2^{-lp}
\|\phi\|_{\mathfrak L_p(E,{\bar{\gamma}},S)}^p.
\end{split}
\end{equation}
\par
\emph{Step 4.}
We first prove that $\mathcal F_l^{s,r}=\emptyset$.
Assume, to the contrary, that
\((Q_1,Q_2)\in\mathcal F_l^{s,r}\). Then \(Q_1\) is singular at level \(k_l\), while \(Q_2\) is regular at level \(k_{l-1}\), and
    $c_\eta Q_1\cap c_\eta Q_2\ne\emptyset$.
Since \(Q_1\) is singular,
    $\operatorname{dist}(Q_1,S)<\sigma2^{-k_l}$.
The interaction condition gives
    $\operatorname{dist}(Q_1,Q_2)\le 2c_\eta2^{-k_{l-1}}$.
Therefore,
\begin{equation}
    \operatorname{dist}(Q_2,S)
    \le
    \operatorname{dist}(Q_2,Q_1)
    +
    \operatorname{diam}Q_1
    +
    \operatorname{dist}(Q_1,S)
    \le
    2c_\eta2^{-k_{l-1}}+(\sigma+1)2^{-k_l}.    
\end{equation}
Since \(k_l\ge k_{l-1}+m_{\operatorname{cut}}\) and since \(\sigma\) was chosen sufficiently large compared with the fixed geometric constants, the right-hand side is strictly smaller than $\sigma2^{-k_{l-1}}$.
This contradicts the assumption \(Q_2\in\mathscr D^r_{k_{l-1}}\). Hence $\mathcal F_l^{s,r}=\emptyset$.
\par
Now let \((Q_1,Q_2)\in\mathcal F_l^{s,s}\). We claim that $x_0(Q_1)=x_0(Q_2)$.
Indeed, suppose that \(x_0(Q_1)\ne x_0(Q_2)\). Since distinct points of \(S\) have distance at least \(2\rho_0\), we have $|x_0(Q_1)-x_0(Q_2)|\ge2\rho_0$.
On the other hand, since \(Q_1\) and \(Q_2\) are singular and interact, we get
\begin{equation}
\begin{split}
    |x_0(Q_1)-x_0(Q_2)|
    \le
    \operatorname{dist}(x_0(Q_1),Q_1)
    +
    \operatorname{diam}Q_1
    +
    \operatorname{dist}(Q_1,Q_2)
    \\\quad+
    \operatorname{diam}Q_2
    +
    \operatorname{dist}(Q_2,x_0(Q_2))
    \le
    2(\sigma+c_\eta+1)2^{-k_{l-1}}.
\end{split}
\end{equation} 
Since \(k_{l-1}\ge k_{\operatorname{sep}}\), the definition of \(k_{\operatorname{sep}}\) gives
    $2(\sigma+c_\eta+1)2^{-k_{l-1}}<\rho_0$. This is a contradiction. Therefore, $x_0(Q_1)=x_0(Q_2)$.
Consequently,
    $a_{Q_1}(\phi)=\phi(x_0(Q_1))=\phi(x_0(Q_2))=a_{Q_2}(\phi)$,
and all singular--singular terms vanish.
\par
Combining \eqref{eq.ext_lp7n}, \eqref{eq.ext_lp_rr}, the regular--singular estimate
\eqref{eq.ext_lp_rs_final}, and the fact that the singular--regular and
singular--singular contributions vanish, we obtain \eqref{eq.higher_term_norm}.
\par
\emph{Step 5.}
We differentiate the representation \eqref{eq.ext_lp7}. Using Lemma~\ref{Lm.lay}(1) and (2) and the bounded multiplicity of interacting cubes, we obtain
\begin{equation}
\label{eq.grad_higher_aux}
\begin{split}
&\left\|
\left|\nabla\left(
(\mathcal E_l\phi-\mathcal E_{l-1}\phi)\widetilde g_l
\right)\right|
\right\|_{L_p(\mathbb R^n,\gamma)}^p
\\
&\quad\lesssim
2^{pk_l}
\sum_{\substack{Q_1\in\widetilde{\mathcal D}_l(\phi)\\
Q_2\in\widetilde{\mathcal D}_{l-1}(\phi)}}
|a_{Q_1}(\phi)-a_{Q_2}(\phi)|^p
\bm{\gamma}(c_\eta Q_1\cap c_\eta Q_2).
\end{split}
\end{equation}
The sum on the right-hand side was already estimated in the previous steps, which gives \eqref{eq.higher_term_grad_norm}. The proof is complete.
\end{proof}
Now we are ready to establish the boundedness of the extension operator.
\begin{Prop}
    \label{Prop.ext_bound}
    There is a constant \(C>0\) such that, for every
    \(\phi\in\mathfrak L_p(E,{\bar{\gamma}},S)\),
    \begin{equation}
    \label{eq.ext_bp_stat}
        \|\operatorname{Ext}\phi\|_{B^{\frac{\theta}{p}}_{p,1}(\mathbb R^n,\gamma)}
        \le
        C
        \|\phi\|_{\mathfrak L_p(E,{\bar{\gamma}},S)}.
    \end{equation}
\end{Prop}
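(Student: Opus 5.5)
The plan is to write $\operatorname{Ext}\phi=\sum_{l\ge0}F_l$, where $F_0:=\mathcal E_0\phi\cdot\widetilde g_0$ and $F_l:=(\mathcal E_l\phi-\mathcal E_{l-1}\phi)\widetilde g_l$ for $l\ge1$. Each $F_l$ is $C^1$. Lemmas~\ref{Lm.zero_term} and~\ref{Lm.higher_terms} give
\begin{equation*}
\|F_l\|_{L_p(\mathbb R^n,\gamma)}\lesssim 2^{-k_l\frac{\theta}{p}}2^{-l}\|\phi\|,\qquad \||\nabla F_l|\|_{L_p(\mathbb R^n,\gamma)}\lesssim 2^{k_l(1-\frac{\theta}{p})}2^{-l}\|\phi\|,
\end{equation*}
with $\|\phi\|:=\|\phi\|_{\mathfrak L_p(E,\bar\gamma,S)}$. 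For $l=0$ both bounds hold with constant $C$.

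First, the $L_p$ part. The series $\sum_l\|F_l\|_{L_p(\mathbb R^n,\gamma)}$ is dominated by $\sum_l 2^{-l}\|\phi\|$, so it converges. Hence the series converges in $L_p(\mathbb R^n,\gamma)$, and also pointwise off $E$, since only finitely many terms are nonzero there. We get $\|\operatorname{Ext}\phi\|_{L_p(\mathbb R^n,\gamma)}\lesssim\|\phi\|$. Local integrability follows from the weighted averaging inequality \eqref{eq.equiv_muck}, because $L_p(\gamma)\subset L_1^{\operatorname{loc}}$ for $A_p^{\operatorname{loc}}$ weights.

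Second, the Besov seminorm. I will use the dyadic form \eqref{eq.equiv_besov_norm} with $q=1$ and $s=\theta/p$. Since $\overline{\Delta}_t$ is sublinear, $\overline{\Delta}_{2^{-k}}\operatorname{Ext}\phi\le\sum_l\overline{\Delta}_{2^{-k}}F_l$ pointwise a.e. For each $l$, combining both parts of Lemma~\ref{Lm.use_ineq} gives
\begin{equation*}
\|\overline{\Delta}_{2^{-k}}F_l\|_{L_p(\mathbb R^n,\gamma)}\lesssim \min\{2^{-k}\||\nabla F_l|\|,\ \|F_l\|\}\lesssim 2^{-l}\|\phi\|\min\{2^{-k}2^{k_l(1-\frac{\theta}{p})},\,2^{-k_l\frac{\theta}{p}}\}.
\end{equation*}
Multiplying by $2^{k\theta/p}$ and summing over $k\in\mathbb N_0$, I split at $k=k_l$. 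For $k\le k_l$ the sum is $\sum_{k\le k_l}2^{(k-k_l)(1-\theta/p)}$, which is $\lesssim1$ since $\theta<p$. For $k>k_l$ the sum is $\sum_{k>k_l}2^{(k-k_l)\theta/p}\cdot$ — this diverges, so the split must be chosen the other way. For $k\ge k_l$ I use the gradient bound: $2^{k\theta/p}2^{-k}2^{k_l(1-\theta/p)}=2^{-(k-k_l)(1-\theta/p)}$, summable. For $k<k_l$ I use the $L_p$ bound: $2^{k\theta/p}2^{-k_l\theta/p}=2^{-(k_l-k)\theta/p}$, summable since $\theta>0$. Both geometric sums are bounded by constants depending only on $\theta/p$. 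So $\sum_k2^{k\theta/p}\|\overline{\Delta}_{2^{-k}}F_l\|\lesssim2^{-l}\|\phi\|$, and summing over $l$ gives $\|\operatorname{Ext}\phi\|_{b^{\theta/p}_{p,1}}\lesssim\|\phi\|$.

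The main technical point is justifying the interchange of $\overline{\Delta}_t$ with the infinite sum. I handle it by the $L_p(\gamma)$ convergence of the series together with Lemma~\ref{Lm.use_ineq}(1) applied to the tails, or alternatively by Fatou's lemma on the partial sums. Note that the exponents $0<\theta<p$ are exactly what make both geometric series converge, which is why the limiting $q=1$ case works with the $2^{-l}$ decay built into the choice of $\{k_l\}$.
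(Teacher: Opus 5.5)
Your proposal is correct and is essentially the paper's argument. The paper also applies the triangle inequality over the pieces $f_l$ and feeds in the $L_p$ and gradient bounds of Lemmas~\ref{Lm.zero_term} and~\ref{Lm.higher_terms}. It splits at scale $2^{-k_l}$ through Remark~\ref{Rm.dif_bes} with $\delta=2^{-k_l}$, whereas you carry out the same split directly in the dyadic form of the seminorm (you should delete the false start about the direction of the split in a final write-up).
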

\begin{proof}
Fix \(\phi\in\mathfrak L_p(E,{\bar{\gamma}},S)\). For brevity, set $f_l
    :=
    \left(\mathcal E_l\phi-\mathcal E_{l-1}\phi\right)\widetilde g_l$,
    $ l\ge0$.
Then $\operatorname{Ext}\phi
    =
    \sum\limits_{l=0}^{\infty} f_l$.
By the triangle inequality in \(B^{\frac{\theta}{p}}_{p,1}(\mathbb R^n,\gamma)\), it is enough to estimate $\sum\limits_{l=0}^{\infty}
    \|f_l\|_{B^{\frac{\theta}{p}}_{p,1}(\mathbb R^n,\gamma)}$.
\par
For each \(l\ge0\), Remark~\ref{Rm.dif_bes}, applied with
\(\delta=2^{-k_l}\), gives
\begin{equation}
\label{eq.ext_bound_aux}
    \|f_l\|_{B^{\frac{\theta}{p}}_{p,1}(\mathbb R^n,\gamma)}
    \lesssim
    2^{-k_l\left(1-\frac{\theta}{p}\right)}
    \||\nabla f_l|\|_{L_p(\mathbb R^n,\gamma)}
    +
    2^{k_l\frac{\theta}{p}}
    \|f_l\|_{L_p(\mathbb R^n,\gamma)}.
\end{equation}
For \(l=0\), Lemma~\ref{Lm.zero_term} yields
\begin{equation}
    \|f_0\|_{B^{\frac{\theta}{p}}_{p,1}(\mathbb R^n,\gamma)}
    \lesssim
    \|\phi\|_{\mathfrak L_p(E,{\bar{\gamma}},S)},
\end{equation}
because \(k_0=k_{\operatorname{sep}}\) is fixed. For \(l\ge1\), Lemma~\ref{Lm.higher_terms} and \eqref{eq.ext_bound_aux} give
\begin{equation}
    \|f_l\|_{B^{\frac{\theta}{p}}_{p,1}(\mathbb R^n,\gamma)}
    \lesssim
    2^{-l}
    \|\phi\|_{\mathfrak L_p(E,{\bar{\gamma}},S)}.
\end{equation}
Therefore,
\begin{equation}
    \sum_{l=0}^{\infty}
    \|f_l\|_{B^{\frac{\theta}{p}}_{p,1}(\mathbb R^n,\gamma)}
    \lesssim
    \|\phi\|_{\mathfrak L_p(E,{\bar{\gamma}},S)}.
\end{equation}
This proves \eqref{eq.ext_bp_stat}.
\end{proof}
Finally, we show that the extension operator \(\operatorname{Ext}\) is a right inverse to the trace operator. We first prove an auxiliary estimate.
\begin{Lm}
    \label{Lm.ext_inv_est}
    Given \(\phi\in\mathfrak L_p(E,{\bar{\gamma}},S)\), for \(l\in\mathbb N_0\), set $r_l:=\rho_-2^{-(k_l+m_{\operatorname{cut}})}$.
    Let \(x\in E\setminus S\). Then there exists \(l(x)\in\mathbb N_0\) such that, for every \(l\ge L\ge l(x)\), every cube
    \(Q\in\widetilde{\mathcal D}_l(\phi)\) satisfying
       $ c_\eta Q\cap Q_{r_L}(x)\ne\emptyset$
    is regular. Moreover, there exist constants \(C>0\) and \(\beta>0\), independent of
    \(\phi,l,L,x\), and \(a\), such that, for every \(a\in\mathbb R\),
    \begin{equation}
    \label{eq.ext_inv_est}
        \|(\mathcal E_l\phi-a)\widetilde g_l\|_{L_p(Q_{r_L}(x),\gamma)}
        \le
        C2^{-k_l\frac{\theta}{p}}
        \|\phi-a\|_{L_p(Q^E_{\beta r_L}(x),{\bar{\gamma}})}.
    \end{equation}
\end{Lm}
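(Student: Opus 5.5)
The plan has two parts: a geometric argument, which comes first, followed by a local version of the \(L_p\)-estimate from Step 2 of Lemma~\ref{Lm.zero_term}.

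\emph{Geometric part.} Fix \(x\in E\setminus S\) and put \(\delta:=\operatorname{dist}(x,S)>0\); if \(S=\emptyset\) there is nothing to do. Since \(k_l\to\infty\), I choose \(l(x)\) so that \((\sigma+c_\eta+1)2^{-k_{l(x)}}\le\delta/4\). Now let \(l\ge L\ge l(x)\) and \(Q\in\widetilde{\mathcal D}_l(\phi)\) with \(c_\eta Q\cap Q_{r_L}(x)\neq\emptyset\). Then every point of \(Q\) lies within \(r_L+c_\eta 2^{-k_l}\le 2c_\eta 2^{-k_L}\) of \(x\), because \(r_L\le 2^{-k_L}\). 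Hence
\[
\operatorname{dist}(Q,S)\ge \delta-2c_\eta2^{-k_L}\ge \tfrac{\delta}{2}\ge \sigma 2^{-k_{l(x)}}\ge\sigma 2^{-k_l},
\]
so \(Q\) is regular. The same computation shows \(\widehat Q=\lambda Q\cap E\subset Q^E_{(c_\eta+\lambda+1)2^{-k_L}}(x)\). Since \(r_L=\rho_-2^{-m_{\operatorname{cut}}}2^{-k_L}\) is a fixed multiple of \(2^{-k_L}\), this gives \(\widehat Q\subset Q^E_{\beta r_L}(x)\) with \(\beta:=(c_\eta+\lambda+1)2^{m_{\operatorname{cut}}}/\rho_-\), a structural constant.

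\emph{Analytic part.} On \(\operatorname{supp}\widetilde g_l\) we have \(g_{k_l}=1\) by \eqref{eq.cut_off_shift}, so \(\sum_{Q\in\widetilde{\mathcal D}_l(\phi)}\psi_Q=1\) there. This gives
\[
(\mathcal E_l\phi-a)\widetilde g_l=\sum_{Q\in\widetilde{\mathcal D}_l(\phi)}(a_Q(\phi)-a)\psi_Q\widetilde g_l,
\]
and on \(Q_{r_L}(x)\) only the cubes treated above contribute, since \(\operatorname{supp}\psi_Q\subset c_\eta Q\). Using bounded multiplicity of \(\{c_\eta Q\}\) and \(0\le\psi_Q,\widetilde g_l\le1\), I get
\[
\|(\mathcal E_l\phi-a)\widetilde g_l\|^p_{L_p(Q_{r_L}(x),\gamma)}\lesssim \sum_{Q}|a_Q(\phi)-a|^p\,\bm\gamma(c_\eta Q).
\]
Each such \(Q\) is regular, so \(a_Q(\phi)-a=\fint_{\widehat Q}(\phi-a)\,d\bm{\bar\gamma}\). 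Jensen's inequality then gives \(|a_Q(\phi)-a|^p\le \fint_{\widehat Q}|\phi-a|^p\,d\bm{\bar\gamma}\). Local doubling of \(\bm\gamma\) together with Lemma~\ref{Lm.ext_f_pr}(2) gives \(\bm\gamma(c_\eta Q)/\bm{\bar\gamma}(\widehat Q)\lesssim 2^{-k_l\theta}\). Therefore
\[
\sum_Q|a_Q(\phi)-a|^p\,\bm\gamma(c_\eta Q)\lesssim 2^{-k_l\theta}\sum_Q\int_{\widehat Q}|\phi-a|^p\,d\bm{\bar\gamma}\lesssim 2^{-k_l\theta}\|\phi-a\|^p_{L_p(Q^E_{\beta r_L}(x),\bar\gamma)},
\]
where the last step uses the bounded overlap of patches from Lemma~\ref{Lm.ext_f_pr}(1) and the inclusion \(\widehat Q\subset Q^E_{\beta r_L}(x)\). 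Taking \(p\)-th roots gives \eqref{eq.ext_inv_est}.

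The main obstacle is the bookkeeping needed to keep \(\beta\) and \(C\) independent of \(\phi\), \(l\), \(L\) and \(x\). Two points require care. First, regularity of \(Q\) must be checked at level \(k_l\), not at level \(k_L\); this is handled by the threshold \(l(x)\) and the monotonicity of \(k_l\). Second, the comparison \(\bm\gamma(c_\eta Q)\approx\bm\gamma(Q)\) must use only the local doubling of \(\bm\gamma\) at scales at most \(1\), which holds since \(k_l\ge k_{\operatorname{sep}}\ge1\).
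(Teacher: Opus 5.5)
Your proposal is correct and follows essentially the same route as the paper. Both arguments use the same steps: regularity of the relevant cubes via a threshold \(l(x)\) tied to \(\operatorname{dist}(x,S)\), the partition-of-unity identity on \(\operatorname{supp}\widetilde g_l\) from \eqref{eq.cut_off_shift}, bounded multiplicity, Jensen's inequality with Lemma~\ref{Lm.ext_f_pr}(2), and the inclusion of the patches \(\widehat Q\) in \(Q^E_{\beta r_L}(x)\) with bounded overlap; the only difference is that you give explicitly the constants the paper leaves to ``elementary geometry'', namely the threshold for \(l(x)\) and \(\beta=(c_\eta+\lambda+1)2^{m_{\operatorname{cut}}}/\rho_-\).
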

\begin{proof}
Since \(x\notin S\) and \(S\) is finite, we have $\operatorname{dist}(x,S)>0$.
Choose \(l(x)\) so large that, for all \(L\ge l(x)\), $C_0 2^{-k_L}
    <\frac12\operatorname{dist}(x,S)$,
where \(C_0>0\) is a fixed geometric constant larger than all constants appearing below. Then, if \(l\ge L\ge l(x)\), \(Q\in\widetilde{\mathcal D}_l(\phi)\), and
\(c_\eta Q\cap Q_{r_L}(x)\ne\emptyset\), we have $\operatorname{dist}(Q,S)
    \ge
    \frac12\operatorname{dist}(x,S)
    >
    \sigma2^{-k_l}$. Hence \(Q\in\widetilde{\mathcal D}_l^r(\phi)\).
\par
We now prove \eqref{eq.ext_inv_est}. By \eqref{eq.cut_off_shift}, the condition
\(\widetilde g_l(y)\ne0\) implies \(g_{k_l}(y)=1\). Hence
\begin{equation}
    (\mathcal E_l\phi(y)-a)\widetilde g_l(y)
    =
    \sum_{Q\in\widetilde{\mathcal D}_l(\phi)}
    (a_Q(\phi)-a)\psi_Q(y)\widetilde g_l(y).
\end{equation}
Using the bounded multiplicity of the family \(\{c_\eta Q:Q\in\mathcal D_{k_l}\}\), we obtain
\begin{equation}
\label{eq.ext_inv_aux1}
\begin{split}
    \|(\mathcal E_l\phi-a)\widetilde g_l\|_{L_p(Q_{r_L}(x),\gamma)}^p
    \lesssim
    \sum_{\substack{Q\in\widetilde{\mathcal D}_l(\phi):\\
    c_\eta Q\cap Q_{r_L}(x)\ne\emptyset}}
    |a_Q(\phi)-a|^p{\gamma}(c_\eta Q).
\end{split}
\end{equation}
By the first part of the proof, all cubes in the last sum are regular. Therefore,
using Jensen's inequality, the local doubling property of $\gamma$, and Lemma~\ref{Lm.ext_f_pr}(2), we obtain
\begin{equation}
    |a_Q(\phi)-a|^p\bm{\gamma}(c_\eta Q)
    \lesssim
    2^{-k_l\theta}
    \int_{\widehat Q}
    |\phi(y)-a|^p\,d\bm{\bar{\gamma}}(y).
\end{equation}
Moreover, by elementary geometry, there exists \(\beta>0\), depending only on the fixed parameters, such that
\begin{equation}
    \bigcup_{\substack{Q\in\widetilde{\mathcal D}_l(\phi):\\
    c_\eta Q\cap Q_{r_L}(x)\ne\emptyset}}
    \widehat Q
    \subset
    Q^E_{\beta r_L}(x).
\end{equation}
Substituting these estimates into \eqref{eq.ext_inv_aux1} and using and the bounded multiplicity of the patches \(\widehat Q\), we obtain \eqref{eq.ext_inv_est}. The proof is complete.
\end{proof}
\begin{Prop}
    \label{Prop.ext_inv}
    For each \(\phi\in\mathfrak L_p(E,{\bar{\gamma}},S)\),
    \begin{equation}
        \operatorname{Tr}(\operatorname{Ext}\phi)=\phi
    \end{equation}
    for \(\mathcal H^d\lfloor_E\)-almost every point of \(E\).
\end{Prop}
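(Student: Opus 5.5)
We will show that $\mathcal H^d\lfloor_E$-almost every $x\in E\setminus S$ is a Lebesgue point of $\operatorname{Ext}\phi$ with value $\phi(x)$. Since $S$ is finite, it is $\mathcal H^d\lfloor_E$-null, so this suffices. The natural candidate set consists of the points $x\in E\setminus S$ that are Lebesgue points of $\phi$ with respect to the locally doubling measure $\bm{\bar\gamma}$ in the $L_p$ sense, i.e. $\fint_{Q^E_\rho(x)}|\phi-\phi(x)|^p\,d\bm{\bar\gamma}\to0$, and at which $\liminf_{\rho\to0}\bm{\bar\gamma}(Q^E_\rho(x))/\rho^d>0$. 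Such points have full measure. Indeed, $\phi\in L_p^{\operatorname{loc}}(E\setminus S,\bar\gamma)$ by Remark~\ref{Rm.ren_sp}, and $\bm{\bar\gamma}$ is doubling away from $S$ by Lemma~\ref{Lm.alm_reg_bas_pr}, so Lebesgue differentiation holds on $E\setminus S$. The second property is the one already established in the proof of Corollary~\ref{Ca.coreg_tr}.

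Fix such an $x$ and $r>0$ small. Choose $L$ with $r\approx r_L$; more precisely, $r_{L+1}<r\le r_L$, where $r_L$ is as in Lemma~\ref{Lm.ext_inv_est}, and take $L\ge l(x)$. The key step is to write $\operatorname{Ext}\phi-\phi(x)$ on $Q_r(x)$ as a telescoping sum. On $Q_{r_L}(x)$, Lemma~\ref{Lm.lay}(3) gives $\widetilde g_l=1$ for all $l\le L$, because $r_L=\rho_-2^{-(k_L+m_{\operatorname{cut}})}$ and $k_l\le k_L$. Hence there $\sum_{l\le L}(\mathcal E_l-\mathcal E_{l-1})\phi\,\widetilde g_l=\mathcal E_L\phi$, and
\begin{equation*}
\operatorname{Ext}\phi-\phi(x)=(\mathcal E_L\phi-\phi(x))\widetilde g_L+\sum_{l>L}\bigl((\mathcal E_l\phi-\phi(x))\widetilde g_l-(\mathcal E_{l-1}\phi-\phi(x))\widetilde g_l\bigr).
\end{equation*}
Rather than this crude form, we would use the same pair decomposition as in Lemma~\ref{Lm.higher_terms}. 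For $l>L$, every interacting cube near $x$ is regular by Lemma~\ref{Lm.ext_inv_est}. The terms with $l>L$ are then controlled either by the global bound \eqref{eq.higher_term_norm}, which gives $2^{-k_l\theta/p}2^{-l}$, or by applying \eqref{eq.ext_inv_est} with $a=\phi(x)$ on the smaller cubes.

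To pass to the unweighted average, apply the weighted averaging inequality \eqref{eq.equiv_muck}:
\begin{equation*}
\fint_{Q_r(x)}|\operatorname{Ext}\phi-\phi(x)|\,dy\lesssim\Bigl(\frac{1}{\bm\gamma(Q_r(x))}\int_{Q_r(x)}|\operatorname{Ext}\phi-\phi(x)|^p\,d\bm\gamma\Bigr)^{1/p}.
\end{equation*}
For the main term, \eqref{eq.ext_inv_est} with $l=L$ gives the bound $2^{-k_L\theta/p}\|\phi-\phi(x)\|_{L_p(Q^E_{\beta r_L}(x),\bar\gamma)}$. By \eqref{eq.alm_reg} and doubling, $\bm\gamma(Q_r(x))\approx r^\theta\bm{\bar\gamma}(Q^E_{\beta r_L}(x))$ and $r\approx2^{-k_L}$. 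The main term is therefore $\lesssim(\fint_{Q^E_{\beta r_L}(x)}|\phi-\phi(x)|^p\,d\bm{\bar\gamma})^{1/p}\to0$ at Lebesgue points.

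\textbf{Main obstacle.} The tail $l>L$ is the delicate part. For each $l>L$, consider $(\mathcal E_l\phi-\mathcal E_{l-1}\phi)\widetilde g_l$ on $Q_r(x)$. Bound it by $\|(\mathcal E_l\phi-\phi(x))\widetilde g_l\|+\|(\mathcal E_{l-1}\phi-\phi(x))\widetilde g_l\|$ on $Q_{r_L}(x)$ using \eqref{eq.ext_inv_est}; for the $l-1$ term use $\widetilde g_l\le\widetilde g_{l-1}$. This yields $2^{-k_l\theta/p}\|\phi-\phi(x)\|_{L_p(Q^E_{\beta r_L}(x),\bar\gamma)}$, whose sum over $l>L$ is geometric in $2^{-k_l\theta/p}$, dominated by $2^{-k_L\theta/p}$. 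This reduces the tail to the same Lebesgue-point quantity. The place to be careful is that each summand is supported near $E$ only at scale $2^{-k_l}$. We never need that support information, however, because bounding the norm over all of $Q_{r_L}(x)$ is enough. The normalization is uniform across $l$, since the same cube $Q_{r_L}(x)$ and the same $\bm\gamma(Q_r(x))$ are used throughout.
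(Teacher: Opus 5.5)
\textbf{Gap: the normalization for intermediate radii.} You choose $L$ with $r_{L+1}<r\le r_L$ and then assert $r\approx 2^{-k_L}$ and $\bm\gamma(Q_r(x))\approx r^\theta\,\bm{\bar\gamma}(Q^E_{\beta r_L}(x))$. This is false in general. The only constraint on the $\phi$-dependent sequence $\{k_l\}$ is $k_{l+1}\ge k_l+m_{\operatorname{cut}}$. The gaps $k_{l+1}-k_l$ are dictated by the decay rates needed in \eqref{eq.ext_def1}--\eqref{eq.ext_def1_tail}, so they can be arbitrarily large. For $r$ close to $r_{L+1}$ you estimate $\|\operatorname{Ext}\phi-\phi(x)\|_{L_p(Q_{r_L}(x),\gamma)}$ over the large cube but divide by $\bm\gamma(Q_r(x))$. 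The lost factor $\bm\gamma(Q_{r_L}(x))/\bm\gamma(Q_r(x))$ is unbounded, so your argument does not show that $x$ is a Lebesgue point of $\operatorname{Ext}\phi$ as $r\to0$ through all radii. Your remark that ``the normalization is uniform across $l$'' does not help: the problem is the mismatch between the cube on which you estimate and the cube by which you normalize, not the dependence on $l$.

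\textbf{How to repair it.} The simplest fix is the one the paper uses: you do not need the full limit. By Proposition~\ref{Prop.ext_bound}, $\operatorname{Ext}\phi\in B^{\frac{\theta}{p}}_{p,1}(\mathbb R^n,\gamma)$. By Corollary~\ref{Ca.coreg_tr} (i.e.\ Theorem~\ref{Th.tr_exist_stat}), $\operatorname{Tr}(\operatorname{Ext}\phi)$ therefore already exists $\mathcal H^d\lfloor_E$-a.e., meaning the unweighted averages of $|\operatorname{Ext}\phi-\operatorname{Tr}(\operatorname{Ext}\phi)(x)|$ over $Q_r(x)$ tend to $0$ along every $r\to0$. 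It then suffices to show that $\fint_{Q_{r_L}(x)}|\operatorname{Ext}\phi-\phi(x)|^p\,d\bm\gamma\to0$ along the particular sequence $r=r_L$. There your telescoping identity, Lemma~\ref{Lm.ext_inv_est} and the geometric summation work verbatim. (For the $\mathcal E_{l-1}$ part of the $l$-th summand the factor is $2^{-k_{l-1}\theta/p}$ rather than $2^{-k_l\theta/p}$, which sums equally well.)

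\textbf{Alternative repair.} If you insist on a direct proof at all scales, you must split by scale. Treat the coarse term $\mathcal E_L\phi$, and the $\mathcal E_L\phi$-part of the $(L+1)$-st summand, by the pointwise bound $\sup_{Q_{r_L}(x)}|\mathcal E_L\phi-\phi(x)|\lesssim\bigl(\fint_{Q^E_{C2^{-k_L}}(x)}|\phi-\phi(x)|^p\,d\bm{\bar\gamma}\bigr)^{1/p}$. This follows from Jensen and the doubling of $\bm{\bar\gamma}$ away from $S$. Use an analogue of \eqref{eq.ext_inv_est} on $Q_r(x)$ itself only for the terms at scales $2^{-k_l}\lesssim r$.
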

\begin{proof}
Fix \(\phi\in\mathfrak L_p(E,{\bar{\gamma}},S)\), and put $f:=\operatorname{Ext}\phi$.
By Proposition~\ref{Prop.ext_bound}, we have $f\in B^{\frac{\theta}{p}}_{p,1}(\mathbb R^n,\gamma)$.
Hence, by Theorem~\ref{Th.tr_exist_stat}, the trace \(\operatorname{Tr}f\) exists.
It is enough to prove that, for \(\mathcal H^d\lfloor_E\)-almost every
\(x\in E\setminus S\),
\begin{equation}
\label{eq.tr_rev_inv_aux1}
    \lim_{L\rightarrow\infty}\fint\limits_{Q_{r_L}(x)}
    |f(y)-\phi(x)|^p\,d\bm{\gamma}(y)=0.
\end{equation}
Indeed, by the weighted averaging inequality \eqref{eq.equiv_muck},
\eqref{eq.tr_rev_inv_aux1} implies
\begin{equation}
    \lim_{L\rightarrow\infty}\fint\limits_{Q_{r_L}(x)}
    |f(y)-\phi(x)|\,dy=0.
\end{equation}
On the other hand, since \(\operatorname{Tr}f\) exists,
\begin{equation}
    \lim_{L\rightarrow\infty}\fint\limits_{Q_{r_L}(x)}
    |f(y)-\operatorname{Tr}f(x)|\,dy=0.
\end{equation}
for \(\mathcal H^d\lfloor_E\)-almost every \(x\in E\). Therefore, $\operatorname{Tr}f(x)=\phi(x)$ for \(\mathcal H^d\lfloor_E\)-almost every \(x\in E\).
\par
It remains to prove \eqref{eq.tr_rev_inv_aux1}. Let \(x\in E\setminus S\) be a
Lebesgue point of \(\phi\) with respect to the measure \(\bm{\bar{\gamma}}\). This holds for \(\mathcal H^d\lfloor_E\)-almost every \(x\in E\setminus S\), because
\(\bar{\gamma}(x)>0\) for \(\mathcal H^d\lfloor_E\)-almost every \(x\in E\). Choose \(l(x)\) as in Lemma~\ref{Lm.ext_inv_est}, and let \(L\ge l(x)\). Since $r_L=\rho_-2^{-(k_L+m_{\operatorname{cut}})}$,
Lemma~\ref{Lm.lay}(3) gives $\widetilde g_L(y)=1$ for all $y\in Q_{r_L}(x)$.
Moreover, since $k_{l+1}\ge k_l+m_{cut}$, we have $\widetilde g_{l+1}\le \widetilde g_l$ for all $l\ge0$.
\par
For \(\mathcal L^n\)-almost every \(y\in Q_{r_L}(x)\), the defining series for
\(f(y)\) is locally finite. Using \(\widetilde g_L(y)=1\), we write
\begin{equation}
\label{eq.ext_inv1}
\begin{split}
    f(y)
    &=
    \mathcal E_L\phi(y)
    +
    \sum_{l=L+1}^{\infty}
    \mathcal E_l\phi(y)\widetilde g_l(y)
    -
    \sum_{l=L}^{\infty}
    \mathcal E_l\phi(y)\widetilde g_{l+1}(y).
\end{split}
\end{equation}
Since
\begin{equation}
    \sum_{l=L+1}^{\infty}\widetilde g_l(y)
    -
    \sum_{l=L}^{\infty}\widetilde g_{l+1}(y)
    =
    0,
\end{equation}
we obtain, for such \(y\),
\begin{equation}
\label{eq.ext_inv2}
\begin{split}
    |f(y)-\phi(x)|
    &\le
    |\mathcal E_L\phi(y)-\phi(x)|
    +
    \sum_{l=L+1}^{\infty}
    |\mathcal E_l\phi(y)-\phi(x)|\widetilde g_l(y)
    \\
    &\quad+
    \sum_{l=L}^{\infty}
    |\mathcal E_l\phi(y)-\phi(x)|\widetilde g_{l+1}(y).
\end{split}
\end{equation}
Applying Lemma~\ref{Lm.ext_inv_est} with \(a=\phi(x)\), and using
\(\widetilde g_{l+1}\le\widetilde g_l\), we get
\begin{equation}
\label{eq.ext_inv3}
    \|f-\phi(x)\|_{L_p(Q_{r_L}(x),\gamma)}
    \lesssim
    2^{-k_L\frac{\theta}{p}}
    \|\phi-\phi(x)\|_{L_p(Q^E_{\beta r_L}(x),{\bar{\gamma}})}.
\end{equation}
Here we used that the sequence \(k_l\) is strictly increasing, and hence
\begin{equation}
    \sum_{l=L}^{\infty}2^{-k_l\frac{\theta}{p}}
    \lesssim
    2^{-k_L\frac{\theta}{p}}.
\end{equation}

Dividing \eqref{eq.ext_inv3} by \(\bm{\gamma}(Q_{r_L}(x))^{\frac1p}\), and using the almost regularity condition together with the local doubling properties of
\(\bm{\gamma}\) and \(\bm{\bar{\gamma}}\) away from \(S\), we obtain
\begin{equation}
\label{eq.ext_inv4}
\begin{split}
    \fint\limits_{Q_{r_L}(x)}
    |f(y)-\phi(x)|^p\,d\bm{\gamma}(y)
    \lesssim
    \fint\limits_{Q^E_{\beta r_L}(x)}
    |\phi(y)-\phi(x)|^p\,d\bm{\bar{\gamma}}(y).
\end{split}
\end{equation}
Since \(x\) is a \(\bm{\bar{\gamma}}\)-Lebesgue point of \(\phi\), the right-hand side tends to \(0\) as \(L\to\infty\). This proves
\eqref{eq.tr_rev_inv_aux1}, and hence $\operatorname{Tr}(\operatorname{Ext}\phi)=\phi$
for \(\mathcal H^d\lfloor_E\)-almost every point of \(E\).
\end{proof}
\begin{Remark}
    Since both \(\operatorname{Tr}f\) and \(\phi\) belong to \(\mathfrak{L}_p(E,{\bar{\gamma}},S)\), and since they coincide $\mathcal{H}^d\lfloor_E$-almost everywhere on \(E\setminus S\), their generalized weighted values at points of \(S\) also coincide. Indeed, if \(x_0\in S\), then
\begin{equation}
    (\operatorname{Tr}f-\operatorname{Tr}f(x_0))
    -
    (\phi-\phi(x_0))
\in L_p(Q_{\rho_0}^E(x_0),{\bar{\gamma}}),
\end{equation}
while this function is equal to the constant
\(\phi(x_0)-\operatorname{Tr}f(x_0)\) $\mathcal{H}^d\lfloor_E$-almost everywhere on \(Q_{\rho_0}^E(x_0)\setminus\{x_0\}\). Since
\(\bm{\bar\gamma}(Q_r^E(x_0))=\infty\) for every \(r>0\), this constant must be zero.
\end{Remark}

\section{Examples}
In this section, we present several examples of weights satisfying the assumptions of the main theorem and illustrate the corresponding trace spaces.
\begin{Example}
    Let \(p\in[1,\infty)\), and let \(E\subset\mathbb R^n\) be an Ahlfors --David \(d\)-regular set, where \(d\in(0,n)\). Fix \(x_0\in E\), and define \( \gamma(x):=|x-x_0|^\alpha\), \(x\in\mathbb R^n, \) where \[ -n<\alpha<n(p-1) \quad\text{if }p>1, \qquad -n<\alpha\le0 \quad\text{if }p=1. \] Then \(\gamma\in A_p^{\operatorname{loc}}(\mathbb R^n)\).
    On \(E\), we set $\bar{\gamma}(x):=|x-x_0|^\alpha$.
    For \(x\in E\setminus\{x_0\}\) and $0<r\le \min\left\{\frac12|x-x_0|,1\right\}$,
    we have
    \begin{equation}
    \label{eq.ex1}
        \bm{\gamma}(Q_r(x))
        \approx
        r^n|x-x_0|^\alpha,
        \qquad
        \bm{\bar{\gamma}}(Q_r^E(x))
        \approx
        r^d|x-x_0|^\alpha.
    \end{equation}
    Put $\theta:=n-d$, then \eqref{eq.ex1} gives $\bm{\gamma}(Q_r(x))
        \approx
        r^\theta\bm{\bar{\gamma}}(Q_r^E(x))$
    away from \(x_0\).
\par
    At the point \(x_0\), we have
    \begin{equation}
        \frac{\bm{\gamma}(Q_r(x_0))}{r^\theta}
        \approx
        r^{\alpha+d}.
    \end{equation}
    Therefore, if \(\alpha<-d\), then \(x_0\) is a rapid singular point. If \(\alpha>-d\), then \(\bar{\gamma}\in L_1^{\operatorname{loc}}(E)\), and there is no rapid singularity. Thus, for \(\alpha\neq-d\)
    \begin{equation}
        RS_{p, \theta}(\gamma)
        =
        \begin{cases}
            \{x_0\}, & \alpha<-d,\\
            \emptyset, & \alpha>-d.
        \end{cases}
    \end{equation}
    If, in addition, $\theta < p$, then, for \(\alpha<-d\),
    \begin{equation}
        \operatorname{Tr}
        \bigl(
        B^{\frac{\theta}{p}}_{p,1}(\mathbb R^n,\gamma)
        \bigr)
        =
        \mathfrak L_p(E,{\bar{\gamma}},\{x_0\}),
    \end{equation}
    whereas, for \(\alpha>-d\),
    \begin{equation}
        \operatorname{Tr}
        \bigl(
        B^{\frac{\theta}{p}}_{p,1}(\mathbb R^n,\gamma)
        \bigr)
        =
        \mathfrak L_p(E,{\bar{\gamma}},\emptyset)
        =
        L_p(E,{\bar{\gamma}}).
    \end{equation}
\par
    In particular, if \(E=\mathbb R^{n-1}\), \(x_0=0\), \(p>1\), and $-(n-1)<\alpha<n(p-1)$,
    then \(\alpha>-d\), where \(d=n-1\). Thus we recover the result of Haroske and Schmeisser \cite{Har_Sch}:
    \begin{equation}
        \operatorname{Tr}
        \bigl(
        B^{\frac1p}_{p,1}(\mathbb R^n,|x|^\alpha)
        \bigr)
        =
        L_p(\mathbb R^{n-1},|x'|^\alpha).
    \end{equation}
    On the other hand, the range $ -n<\alpha<-(n-1)$
    is also covered by the present result and corresponds to the rapid singular case.
\end{Example}

\begin{Example}
    Let \(d\in(0,n)\), let \(E\subset\mathbb R^n\) be an Ahlfors--David \(d\)-regular set, and let \(\alpha\in\mathbb R\). Define
    \begin{equation}
        \gamma(x)
        :=
        \begin{cases}
            \operatorname{dist}(x,E)^\alpha,
            & \operatorname{dist}(x,E)\le1,\\
            1,
            & \operatorname{dist}(x,E)>1.
        \end{cases}
    \end{equation}
    Assume that
    \[ -(n-d)<\alpha<(n-d)(p-1) \quad\text{if }p>1, \qquad -(n-d)<\alpha\le0 \quad\text{if }p=1. \]
    Then \(\gamma\in A_p^{\operatorname{loc}}(\mathbb R^n)\). For \(x\in E\) and \(0<r\le1\), we have
    \begin{equation}
        \bm{\gamma}(Q_r(x))
        \approx
        r^{n+\alpha}.
    \end{equation}
    Put $\theta:=n-d+\alpha$,
    then
    \begin{equation}
        \frac{\bm{\gamma}(Q_r(x))}{r^\theta}
        \approx
        r^d
        \approx
        \mathcal H^d(Q_r^E(x)).
    \end{equation}
    Hence \(E\) is Ahlfors--David codimension-\(\theta\) regular with respect to \(\gamma\), with boundary weight \(\bar{\gamma}\equiv1\). Therefore, if, in addition, $\theta \in (0, p)$, then the main theorem gives
    \begin{equation}
        \operatorname{Tr}
        \bigl(
        B^{\frac{n-d+\alpha}{p}}_{p,1}(\mathbb R^n,\gamma)
        \bigr)
        =
        L_p(E).
    \end{equation}
    Thus, in the endpoint case \(q=1\), we recover part of the result of Piotrowska~\cite{Iwon}. We note that, due to the special form of the distance weight, Piotrowska obtained corresponding trace results under weaker restrictions on the parameters.
\end{Example}
\subsection*{Acknowledgments}
\addtocontents{toc}{\protect\setcounter{tocdepth}{0}}
The author is deeply grateful to his Scientific Advisor Alexander I. Tyulenev for proposing the problem, and for his guidance and attention throughout this work.
\par

\renewcommand{\bibname}{References}
\bibliographystyle{plain}
\bibliography{refs.bib}

@article{Gagl,
  author   = {Gagliardo, Emilio},
  title    = {Caratterizzazioni delle tracce sulla frontiera relative ad alcune classi di funzioni in {$n$} variabili},
  journal  = {Rend. Semin. Mat. Univ. Padova},
  fjournal = {Rendiconti del Seminario Matematico della Universit{\`a} di Padova},
  year     = {1957},
  volume   = {27},
  pages    = {284--305},
  url      = {https://eudml.org/doc/106977},
  language = {Italian}
}

@article{Bur_art,
  author   = {Burenkov, V. I. and Gol'dman, M. L.},
  title    = {On the extension of functions from {$L_p$}},
  journal  = {Tr. Mat. Inst. Steklova},
  fjournal = {Trudy Matematicheskogo Instituta Imeni V. A. Steklova},
  issn     = {0371-9685},
  volume   = {150},
  pages    = {31--51},
  year     = {1979},
  language = {Russian}
}

@article{Har_Sch,
  author    = {Haroske, Dorothee D. and Schmeisser, Hans-J{\"u}rgen},
  title     = {On trace spaces of function spaces with a radial weight: the atomic approach},
  journal   = {Complex Variables and Elliptic Equations},
  volume    = {55},
  number    = {8-10},
  pages     = {875--896},
  year      = {2010},
  publisher = {Taylor \& Francis},
  doi       = {10.1080/17476930903276050},
  url       = {https://doi.org/10.1080/17476930903276050}
}

@article{Tyul_b,
  author   = {Tyulenev, A. I.},
  title    = {Some new function spaces of variable smoothness},
  journal  = {Sbornik: Mathematics},
  year     = {2015},
  volume   = {206},
  number   = {6},
  pages    = {849--891},
  doi      = {10.1070/SM2015v206n06ABEH004481},
  url      = {http://mi.mathnet.ru/eng/sm8399},
  mrnumber = {3438581},
  zbl      = {1364.46034}
}

@article{Tyul_s,
  author   = {Tyulenev, A. I.},
  title    = {Traces of weighted {Sobolev} spaces with {Muckenhoupt} weight. The case {$p=1$}},
  journal  = {Nonlinear Analysis},
  volume   = {128},
  pages    = {248--272},
  year     = {2015},
  issn     = {0362-546X},
  doi      = {10.1016/j.na.2015.08.001},
  url      = {https://www.sciencedirect.com/science/article/pii/S0362546X15002679}
}

@article{GKZ, author = {Gogatishvili, Amiran and Koskela, Pekka and Zhou, Yuan}, title = {Characterizations of {B}esov and {T}riebel--{L}izorkin spaces on metric measure spaces}, journal = {Forum Mathematicum}, volume = {25}, number = {4}, pages = {787--819}, year = {2013}, doi = {10.1515/form.2011.135}, eprint = {1106.2561}, archivePrefix = {arXiv}, primaryClass = {math.CA}, url = {https://arxiv.org/abs/1106.2561} }

@book{Stein,
  author    = {Stein, Elias M.},
  title     = {Harmonic Analysis: {Real}-Variable Methods, Orthogonality, and Oscillatory Integrals},
  series    = {Princeton Mathematical Series},
  volume    = {43},
  isbn      = {0-691-03216-5},
  year      = {1993},
  publisher = {Princeton University Press},
  address   = {Princeton, NJ},
  language  = {English},
  zbl       = {0821.42001}
}

@article{Gul,
  author   = {Gulisashvili, A. B.},
  title    = {Traces of differential functions on subsets of the {Euclidean} space},
  journal  = {J. Sov. Math.},
  fjournal = {Journal of Soviet Mathematics},
  issn     = {0090-4104},
  volume   = {42},
  number   = {2},
  pages    = {1573--1583},
  year     = {1988},
  doi      = {10.1007/BF01665043},
  language = {English}
}

@article{Sak_Sot,
  author  = {Saksman, Eero and Soto, Tom{\'a}s},
  title   = {Traces of {Besov}, {Triebel--Lizorkin} and {Sobolev} Spaces on {Metric} Spaces},
  journal = {Analysis and Geometry in Metric Spaces},
  volume  = {5},
  number  = {1},
  pages   = {98--115},
  year    = {2017},
  doi     = {10.1515/agms-2017-0006},
  url     = {https://doi.org/10.1515/agms-2017-0006}
}

@misc{Luk_e,
  author = {Mal{\'y}, Luk{\'a}{\v s}},
  title  = {Trace and extension theorems for {S}obolev-type functions in metric spaces},
  year   = {2017},
  note   = {\href{https://arxiv.org/abs/1704.06344}{arXiv:1704.06344}}
}

@book{Jon,
  author       = {Jonsson, Alf and Wallin, Hans},
  title        = {Function Spaces on Subsets of {$\mathbb R^n$}},
  series       = {Mathematical Reports},
  volume       = {2},
  number       = {1},
  year         = {1984},
  publisher    = {Harwood Academic Publishers},
  address      = {Chur},
  note         = {xiv+221 pp.}
}

@book{Trieb,
  author    = {Triebel, Hans},
  title     = {Theory of Function Spaces},
  series    = {Mathematik und ihre Anwendungen in Physik und Technik},
  volume    = {38},
  year      = {1983},
  publisher = {Akademische Verlagsgesellschaft Geest \& Portig K.-G.},
  address   = {Leipzig},
  language  = {English},
  zbl       = {0546.46028}
}

@misc{Har_tr,
  author        = {Besoy, Blanca F. and Haroske, Dorothee D. and Triebel, Hans},
  title         = {Traces of some weighted function spaces and related non-standard real interpolation of {Besov} spaces},
  year          = {2020},
  eprint        = {2009.03656},
  archivePrefix = {arXiv},
  primaryClass  = {math.FA},
  url           = {https://arxiv.org/abs/2009.03656}
}

@article{Fraz,
  author  = {Frazier, Michael},
  title   = {Decomposition and traces of weighted mixed-norm {Besov} spaces},
  journal = {La Matematica},
  year    = {2024},
  volume  = {3},
  number  = {4},
  pages   = {1320--1359},
  doi     = {10.1007/s44007-024-00138-6},
  url     = {https://doi.org/10.1007/s44007-024-00138-6},
  issn    = {2730-9657}
}

@book{Bur_book,
  author    = {Burenkov, Victor I.},
  title     = {Sobolev Spaces on Domains},
  series    = {Teubner-Texte zur Mathematik},
  volume    = {137},
  issn      = {0138-502X},
  isbn      = {3-8154-2068-7},
  year      = {1998},
  publisher = {B. G. Teubner},
  address   = {Stuttgart},
  language  = {English},
  zbl       = {0893.46024}
}

@book{law,
  author    = {Evans, Lawrence C. and Gariepy, Ronald F.},
  title     = {Measure Theory and Fine Properties of Functions},
  publisher = {CRC Press},
  year      = {1992},
  isbn      = {0-8493-7157-0},
  address   = {Boca Raton}
}

@book{kuf,
  author    = {Kufner, Alois and Maligranda, Lech and Persson, Lars-Erik},
  title     = {The {Hardy} Inequality: About Its History and Some Related Results},
  isbn      = {978-80-86843-15-5},
  year      = {2007},
  publisher = {Vydavatelsk{\'y} Servis},
  address   = {Pilsen},
  language  = {English}
}

@article{Bes_or,
  author   = {Besov, O. V.},
  title    = {Investigation of a family of function spaces in connection with theorems of imbedding and extension},
  journal  = {Transl., Ser. 2, Am. Math. Soc.},
  fjournal = {Translations. Series 2. American Mathematical Society},
  issn     = {0065-9290},
  volume   = {40},
  pages    = {85--126},
  year     = {1964},
  language = {English},
  doi      = {10.1090/trans2/040/03},
  zbl      = {0158.13901}
}

@article{tyul_s_n,
  author   = {Tyulenev, A. I.},
  title    = {Description of traces of functions in the {Sobolev} space with a {Muckenhoupt} weight},
  journal  = {Proc. Steklov Inst. Math.},
  fjournal = {Proceedings of the Steklov Institute of Mathematics},
  issn     = {0081-5438},
  volume   = {284},
  pages    = {280--295},
  year     = {2014},
  language = {English},
  doi      = {10.1134/S0081543814010209},
  zbl      = {1320.46034}
}

@article{Iwon,
  author   = {Piotrowska, Iwona},
  title    = {Traces on fractals of function spaces with {Muckenhoupt} weights},
  journal  = {Funct. Approx. Comment. Math.},
  fjournal = {Functiones et Approximatio Commentarii Mathematici},
  issn     = {0208-6573},
  volume   = {36},
  pages    = {95--117},
  year     = {2006},
  language = {English},
  doi      = {10.7169/facm/1229616444},
  zbl      = {1140.46015}
}

@article{Peetre,
  author   = {Peetre, Jaak},
  title    = {A counter-example connected with {Gagliardo}'s trace theorem},
  journal  = {Commentationes Mathematicae. Special Issue},
  fjournal = {Annales Societatis Mathematicae Polonae},
  issn     = {0373-8299},
  pages    = {277--282},
  year     = {1979},
  language = {English},
  zbl      = {0442.46026}
}

@article{koch_sn,
  author    = {Kazaniecki, Krystian and Wojciechowski, Micha{\l}},
  title     = {Trace operator on von {Koch}'s snowflake},
  journal   = {Potential Analysis},
  year      = {2024},
  volume    = {61},
  pages     = {659--684},
  doi       = {10.1007/s11118-024-10124-w},
  url       = {https://doi.org/10.1007/s11118-024-10124-w},
  publisher = {Springer},
  issn      = {0926-2601}
}

@article{Ihnat,
  author   = {Ihnatsyeva, Lizaveta and V{\"a}h{\"a}kangas, Antti V.},
  title    = {Characterization of traces of smooth functions on {Ahlfors} regular sets},
  journal  = {Journal of Functional Analysis},
  volume   = {265},
  number   = {9},
  pages    = {1870--1915},
  year     = {2013},
  issn     = {0022-1236},
  doi      = {10.1016/j.jfa.2013.07.006},
  url      = {https://www.sciencedirect.com/science/article/pii/S0022123613002656}
}

@article{mig,
  author    = {Marcos, Miguel Andr{\'e}s},
  title     = {A trace theorem for {Besov} functions in spaces of homogeneous type},
  journal   = {Publicacions Matem{\`a}tiques},
  volume    = {62},
  number    = {1},
  pages     = {185--211},
  year      = {2018},
  publisher = {Universitat Aut{\`o}noma de Barcelona, Departament de Matem{\`a}tiques},
  doi       = {10.5565/PUBLMAT6211810},
  url       = {https://doi.org/10.5565/PUBLMAT6211810}
}

@article{Rych,
  author   = {Rychkov, Vyacheslav S.},
  title    = {{Littlewood--Paley} theory and function spaces with {$A_p^{\operatorname{loc}}$} weights},
  journal  = {Mathematische Nachrichten},
  volume   = {224},
  number   = {1},
  pages    = {145--180},
  year     = {2001},
  doi      = {10.1002/1522-2616(200104)224:1<145::AID-MANA145>3.0.CO;2-2},
  url      = {https://onlinelibrary.wiley.com/doi/abs/10.1002/1522-2616%28200104%29224%3A1%3C145%3A%3AAID-MANA145%3E3.0.CO%3B2-2}
}

@misc{tyulenev2026, author = {Tyulenev, Alexander}, title = {Descriptions of traces of weighted {S}obolev spaces to {A}hlfors--{D}avid regular sets in the case {$p=1$}}, year = {2026}, note = { \href{https://arxiv.org/abs/2606.11924}{arXiv:2606.11924}} }

@article{Gol,
  author  = {Gol'dman, M. L.},
  title   = {On the extension of functions from {$L_p(\mathbb R^m)$} to a space of higher dimension},
  journal = {Mathematical Notes},
  year    = {1979},
  volume  = {25},
  number  = {4},
  pages   = {266--270},
  doi     = {10.1007/BF01688477},
  note    = {Russian original: Mat. Zametki 25 (1979), no. 4, 513--520}
}

@article{shanm, author = {Lindquist, Jeff and Shanmugalingam, Nageswari}, title = {Traces and extensions of certain weighted {S}obolev spaces on {$\mathbb{R}^n$} and {B}esov functions on {A}hlfors regular compact subsets of {$\mathbb{R}^n$}}, journal = {Complex Analysis and its Synergies}, volume = {7}, number = {1}, pages = {Paper No. 7, 12 pp.}, year = {2021}, doi = {10.1007/s40627-021-00064-1}, eprint = {2007.10580}, archivePrefix = {arXiv}, primaryClass = {math.CV}, url = {https://arxiv.org/abs/2007.10580} }

@article{Costea,
  author    = {{\c S}erban Costea},
  title     = {{Besov} capacity and {Hausdorff} measures in metric measure spaces},
  journal   = {Publicacions Matem{\`a}tiques},
  volume    = {53},
  number    = {1},
  pages     = {141--178},
  year      = {2009}
}

@article{Nuutinen,
  author  = {Nuutinen, Juho},
  title   = {The {Besov} capacity in metric spaces},
  journal = {Annales Polonici Mathematici},
  volume  = {117},
  number  = {1},
  pages   = {59--78},
  year    = {2016},
  issn    = {1730-6272},
  doi     = {10.4064/ap3843-4-2016},
  url     = {https://doi.org/10.4064/ap3843-4-2016}
}

@article{Li,
  author  = {Li, Ziwei and Yang, Dachun and Yuan, Wen},
  title   = {Lebesgue points of {Besov} and {Triebel--Lizorkin} spaces with generalized smoothness},
  journal = {Mathematics},
  volume  = {9},
  number  = {21},
  pages   = {2724},
  year    = {2021},
  issn    = {2227-7390},
  doi     = {10.3390/math9212724},
  url     = {https://www.mdpi.com/2227-7390/9/21/2724}
}

@article{Netrusov,
  author   = {Netrusov, Yu. V.},
  title    = {Sets of singularities of functions in spaces of {Besov} and {Lizorkin--Triebel} type},
  journal  = {Trudy Mat. Inst. Steklov.},
  volume   = {187},
  year     = {1989},
  pages    = {162--177},
  note     = {English translation: Proc. Steklov Inst. Math. 187 (1990), 185--203},
  mrnumber = {1006450},
  zbl      = {0719.46018}
}

@article{HumLin,
  author  = {Hummel, Felix and Lindemulder, Nick},
  title   = {Elliptic and Parabolic Boundary Value Problems in Weighted Function Spaces},
  journal = {Potential Analysis},
  volume  = {57},
  pages   = {601--669},
  year    = {2022},
  doi     = {10.1007/s11118-021-09929-w}
}

@book{Sawano,
 author = {Sawano, Yoshihiro},
 title = {Theory of {Besov} spaces},
 fseries = {Developments in Mathematics},
 series = {Dev. Math.},
 issn = {1389-2177},
 volume = {56},
 isbn = {978-981-13-0835-2; 978-981-13-0836-9},
 year = {2018},
 publisher = {Singapore: Springer},
 language = {English},
 doi = {10.1007/978-981-13-0836-9},
 zbMATH = {6909721},
 Zbl = {1414.46004}
}

@article{Fabes,
author = {Eugene B. Fabes and Carlos E. Kenig and Raul P. Serapioni},
title = {The local regularity of solutions of degenerate elliptic equations},
journal = {Communications in Partial Differential Equations},
volume = {7},
number = {1},
pages = {77--116},
year = {1982},
publisher = {Taylor \& Francis},
doi = {10.1080/03605308208820218},
URL = {https://doi.org/10.1080/03605308208820218},
eprint = {https://doi.org/10.1080/03605308208820218}
}
\end{document}